\documentclass[letterpaper, english, 11pt]{smfart}

\usepackage[all]{xy}

\usepackage{setspace}
\setstretch{1.025}

\usepackage[british]{babel}

\usepackage[OT2,T1]{fontenc}


\usepackage{amssymb}

\usepackage{stmaryrd}
\usepackage{amsthm}

\usepackage{amsmath}
\usepackage[latin1]{inputenc}

\allowdisplaybreaks[1]

\usepackage{graphicx}

\usepackage[margin=3.3cm]{geometry}

\usepackage{smfthm}

\usepackage{enumerate}

\usepackage[pagebackref=true, colorlinks=true, linkcolor=black, citecolor=blue, urlcolor=black]{hyperref}

\usepackage[msc-links, lite]{amsrefs}

\DeclareSymbolFont{cyrletters}{OT2}{wncyr}{m}{n}
\DeclareMathSymbol{\Sha}{\mathalpha}{cyrletters}{"58}

\newtheorem{theoA}{Theorem}

\newtheorem*{coro*}{Corollary}
\newtheorem*{conj*}{Conjecture}
\newtheorem*{lemm*}{Lemma}

\providecommand{\twomat}[4]{\left(\begin{array}{cc}#1&#2\\#3&#4\end{array}\right)}
\providecommand{\smalltwomat}[4]{\left(\begin{smallmatrix}#1&#2\\#3&#4\end{smallmatrix}\right)}

\theoremstyle{definition}

\theoremstyle{remark}
\newtheorem{remark*}{Remark}

\NumberTheoremsIn{subsection}

\numberwithin{equation}{subsection}

\newcommand{\fin}{\mathrm{fin}}
\newcommand{\Ar}{\mathrm{Ar}}
\newcommand{\Div}{\mathrm{Div}}

\newcommand{\divisor}{\mathrm{div}}
\newcommand{\one}{\mathbf{1}}
\newcommand{\Q}{\mathbf{Q}}
\newcommand{\GL}{\mathbf{GL}}
\newcommand{\SO}{\mathbf{SO}}
\newcommand{\GO}{\mathbf{GO}}

\newcommand{\Lf}{l_{f_\alpha}}
\newcommand{\X}{\mathcal{X}}
\newcommand{\calL}{\mathcal{L}}
\newcommand{\hatL}{\widehat{\mathcal{L}}}

\renewcommand{\mod}{\ \mathrm{mod}\,}
\newcommand{\R}{\mathbf{R}}
\newcommand{\Z}{\mathbf{Z}}

\newcommand{\frakp}{\mathfrak{p}}

\newcommand{\frakN}{\mathfrak{N}}
\newcommand{\frakh}{\mathfrak{H}}

\newcommand{\frakd}{\mathfrak{d}}
\newcommand{\frakD}{\mathfrak{D}}

\newcommand{\calA}{\mathcal{A}}
\newcommand{\calG}{\mathcal{G}}

\newcommand{\calR}{\mathcal{R}}
\newcommand{\calS}{\mathcal{S}}
\newcommand{\calM}{\mathcal{M}}

\newcommand{\bcalS}{\baar{\mathcal{S}}}

\newcommand{\C}{\mathbf{C}}

\newcommand{\N}{\mathbf{N}}
\newcommand{\OO}{\mathcal{O}}
\newcommand{\A}{\mathbf{A}}

\newcommand{\bks}{\backslash}
\newcommand{\baar}{\overline}

\newcommand{\eps}{\varepsilon}
\newcommand{\e}{\mathbf{e}}

\newcommand{\wtil}{\widetilde}
\newcommand{\Res}{\mathrm{Res}}
\newcommand{\B}{\mathbf{B}}

\newcommand{\W}{\mathcal{W}}
\newcommand{\ord}{\mathrm{ord}}
\newcommand{\vol}{\mathrm{vol}}
\newcommand{\Tr}{\mathrm{Tr}}

\newcommand{\ex}{\mathbf{e}}

\newcommand{\Gal}{\mathrm{Gal}}

\newcommand{\Pic}{\mathrm{Pic}}

\newcommand{\CH}{\mathrm{CH}}
\newcommand{\Ker}{\mathrm{Ker}\,}

\newcommand{\Hom}{\mathrm{Hom}\,}
\newcommand{\End}{\mathrm{End}\,}
\newcommand{\Lie}{\mathrm{Lie}\,}

\newcommand{\Spec}{\mathrm{Spec}\,}
\newcommand{\can}{\mathrm{can}}
\newcommand{\id}{\mathrm{id}}

\setcounter{tocdepth}{2}
\setcounter{secnumdepth}{2}

\title[$p$-adic Heights of Heegner points  on Shimura curves]{$p$-adic Heights of Heegner points   \\  on Shimura curves}
\author{Daniel Disegni}
\address{ Department of Mathematics and Statistics\\ McGill University\\
805  Shebrooke St. West\\
Montreal, QC H3A 0B9\\Canada
}
\email{daniel.disegni@mcgill.ca}

\begin{document}
\frontmatter

\begin{abstract}
Let $f$ be a primitive  Hilbert modular form of parallel weight~$2$ and level~$N$ for the totally real field $F$, and let $p$ be  a rational prime coprime to $2N$.  If $f$ is ordinary at $p$ and $E$ is a CM extension of $F$ of relative discriminant~$\Delta$ prime to~$Np$, we give an explicit construction of the $p$-adic Rankin--Selberg $L$-function $L_{p}(f_E,\cdot)$.  When the sign of its functional equation is $-1$, we show, under the assumption that all primes $\wp\vert p$ are principal ideals of $\OO_{F}$ which split in $\OO_{E}$,  that its central derivative is given by the $p$-adic  height of a Heegner point on the abelian variety $A$ associated with~$f$.

This $p$-adic Gross--Zagier formula generalises the result obtained by Perrin-Riou when $F=\Q$ and $(N,E)$ satisfies the so-called Heegner condition. We deduce applications to both the $p$-adic and the classical Birch and Swinnerton-Dyer conjectures for~$A$. 
\end{abstract}

\maketitle

\tableofcontents

\section*{Introduction}

In this work we generalise the  $p$-adic analogue of the Gross--Zagier formula of Perrin-Riou \cite{PR} to totally real fields, in a  generality similar to  the work of Zhang \cite{shouwu, asian, II}. We describe here the main result and its applications.

\subsection*{The $p$-adic Rankin--Selberg $L$-function} Let $f$ be a primitive (that is, a normalised new eigenform) Hilbert modular form of parallel  weight $2$, level $N$ and trivial character for the totally real field $F$ of degree $g$ and discriminant $D_{F}$.  Let $p$ be a rational prime coprime to $2N$. Fix embeddings $\iota_\infty$ and $\iota_p$ of the algebraic closure $\baar{\Q}$ of $F$ into $\C$ and $\baar{\Q}_p$ respectively; we let $v$ denote the valuation on $\baar{\Q}_{p}$, normalised by $v(p)=1$. 

Let $E\subset\baar{\Q}$ be a CM (that is, quadratic and purely imaginary) extension of $F$ of relative discriminant $\Delta$ coprime to $D_{F}Np$, let $$\eps=\eps_{E/F}:F_\A^\times/F^\times\to \{\pm 1\}$$
be the associated Hecke character and $\frakN=N_{E/F}$ be the relative norm. If 
$$\W\colon E_\A^\times/E^\times\to\baar{\Q}^\times$$
is a finite order Hecke character\footnote{We will throughout use the same notation for a Hecke character, the associated ideal character, and the associated Galois character.} of conductor $\mathfrak{f}=\mathfrak{f}(\W)$ prime to $N\Delta$, the Rankin--Selberg $L$-function $L(f_{E},\W,s)$ is the entire function defined for $\Re(s)>3/2$ by
$$L(f_{E},\W,s)=L^{N\Delta(\W)}(\eps\W|_{F_{\A}^{\times}}, 2s-1)\sum_m {a(f,m) r_\W(m)\over\N m^s},$$
where $\Delta(\W)=\Delta\frakN( \mathfrak{f})$,  $r_\W(m)= \sum_{\frakN(\mathfrak{a})=m}\W(\mathfrak{a})$ (the sum running over all nonzero ideals of $\OO_E$) and
$$L^{N\Delta(\W)}(\eps\W|_{\OO_F}, s)=\sum_{(m,  N\Delta(\W))=1}\eps(m)\W(m)\N m^{-s}.$$

This $L$-function admits a $p$-adic analogue (\S\ref{sec:L}). Let $E_{\infty}'$ be the maximal abelian extension of $E$ unramified outside $p$, and $\calG'=\Gal(E'_{\infty}/E)$.\footnote{The reason for the notation is that later in the paper we will denote by $E_{\infty}$ 
the maximal $\Z_{p}$-subextension of $E_{\infty}'$.}
(It has rank $1+\delta+g$ over $\Z_{p}$, where $\delta$ is the Leopoldt defect of $F$.) 
For each prime $\wp$ of $\OO_F$ dividing $p$, let 
$$P_{\wp,f}(X)=X^2-a(f,\wp) X+\N \wp$$
be the  $\wp^{\mathrm{th}}$ Hecke polynomial of $f$, and assume that $v(\iota_{p}(a(f, \wp)))=0$; in this case $f$ is said to be  \emph{ordinary}, and  there is a unique root $\alpha_{\wp}\in \baar{Q}$ of $P_{\wp, f}(X)$  such that $\iota_{p}(\alpha_{\wp})$ is a $p$-adic unit.  Let $L\subset\baar{\Q}_{p}$ be the finite  extension of $\Q_{p}$ generated by the Fourier coefficients $a(f, m)$ of $f$ and by the $\alpha_{\wp}$ for $\wp\vert p$.

\begin{theoA}\label{theoremA} There exists a unique element  $L_{p}(f_{E})$ of $\OO_{L}\llbracket \calG'\rrbracket\otimes_{\OO_{L}} L$
satisfying the interpolation property
$$L_{p}(f_E)(\W)= \frac{ \W(d_{F}^{(p)})\tau(\baar{\W})   \N(\Delta(\W))^{1/2}   V_{p}(f,\W) \baar{\W}(\Delta)}
{  \alpha_{{\frak f}}  \Omega_{f}} 
 L(f_{E} , \baar{\W},1),
$$
for all  finite order characters $\W$  of $\calG'$ of conductor ${\frak f}(\W)$. Here both sides are algebraic numbers,\footnote{By a well-known theorem of Shimura \cite{shimura}. They are compared via $\iota_{p}^{-1}$ and $\iota_{\infty}^{-1}$.}  $\baar{\W}=\W^{-1}$ and
\begin{align*}
\Omega_{f}=(8\pi^{2})^{g}\langle f,f \rangle_{N}
\end{align*}
with $\langle\ , \ \rangle_{N}$ the Petersson inner product \eqref{pet}; $\tau(\baar{\W})$ is a normalized Gau\ss\ sum; $V_{p}(f,\W)$ is a product of partial Euler factors at $p$; and finally $\alpha_{{\frak f}}=\prod_{\wp\vert p}\alpha_{\wp}^{v_{\wp}(\frakN({\frak f}))}$.
\end{theoA}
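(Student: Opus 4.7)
Uniqueness of $L_{p,\alpha}(f_{E})$ is immediate: the finite-order characters of $\calG'$ are $p$-adically dense in the space of continuous characters $\calG'\to\baar\Q_{p}^{\times}$, so any element of $\OO_{L}\llbracket\calG'\rrbracket\otimes L$ is determined by its evaluations on such characters. For existence, I would transpose Perrin-Riou's construction to the Hilbert setting in the spirit of Hida and Panchishkin; the argument proceeds in three steps.

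The first step is a classical integral representation. For each finite-order $\W$ of conductor $\mathfrak f$ prime to $N\Delta$, the Rankin--Selberg method produces a weight-$2$ cusp form $\Phi_{\W}$ of level dividing $N\Delta(\W)$, built essentially from the weight-one theta series of $\baar{\W}$ and a holomorphic weight-one Eisenstein series of character $\eps\W|_{F}$, satisfying
$$L(f_{E},\baar{\W},1)=\Omega_{f}\cdot c(\W)^{-1}\cdot\langle f,\Phi_{\W}\rangle_{N},$$
where $c(\W)=\W(d_{F}^{(p)})\tau(\baar{\W})\N(\Delta(\W))^{1/2}V_{p}(f,\W)\baar{\W}(\Delta)/\alpha_{\frakN(\mathfrak f)}$ is precisely the coefficient in the target interpolation formula; one arranges that the Fourier coefficients of $\Phi_{\W}$ are algebraic and of controlled $p$-integrality. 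The second step is to $p$-adically interpolate the family $\{\Phi_{\W}\}$ into a single element $\boldsymbol{\Phi}\in\OO_{L}\llbracket\calG'\rrbracket\,\widehat\otimes\,V_{p}$, where $V_{p}$ is a suitable space of $p$-adic Hilbert modular forms of weight $2$; this $\boldsymbol{\Phi}$ is constructed as the product of a theta measure, coming from the tautological character $\calG'\to\OO_{L}\llbracket\calG'\rrbracket^{\times}$, and a Hilbert analogue of Katz's $p$-adic Eisenstein measure. The third step is to apply to $\boldsymbol{\Phi}$ the $p$-adic linear functional $\ell_{f^{*}}$ arising from Hida's ordinary idempotent $e_{\alpha}$ followed by projection onto the $f^{*}$-isotypic component, where $f^{*}$ is the $p$-ordinary stabilization of $f$ with $U_{\wp}$-eigenvalues $\alpha_{\wp}$. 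Then $L_{p,\alpha}(f_{E}):=\ell_{f^{*}}(e_{\alpha}\boldsymbol{\Phi})$, evaluated at a finite-order $\W$, returns $\langle f,\Phi_{\W}\rangle_{N}/\Omega_{f}$ up to the Euler-factor correction $V_{p}(f,\W)$ and the normalization $\alpha_{\frakN(\mathfrak f)}^{-1}$ coming from the $f\leftrightarrow f^{*}$ change of basis under $U_{\wp}$, which is exactly the shape of the claimed interpolation.

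The main obstacle is the precise normalization throughout: ensuring that the convolution of the theta and Eisenstein measures reproduces $\Phi_{\W}$ on the nose for each finite-order $\W$, so that $\tau(\baar\W)$, $\N(\Delta(\W))^{1/2}$, $\W(d_{F}^{(p)})$ and $\baar\W(\Delta)$ all appear with the correct powers and signs, and so that $V_{p}(f,\W)$ and $\alpha_{\frakN(\mathfrak f)}^{-1}$ emerge cleanly from the ordinary projection. The bulk of the technical work will be this bookkeeping, together with verifying that the resulting object has bounded denominators after tensoring with $L$ and thus lands in $\OO_{L}\llbracket\calG'\rrbracket\otimes L$ rather than in a larger ring.
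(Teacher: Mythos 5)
Your proposal is correct and follows essentially the same route as the paper: a Rankin--Selberg kernel interpolated $p$-adically as the (trace of the) convolution of a theta measure and a weight-one Eisenstein (pseudo-)measure on $\calG'$, to which one applies Hida's ordinary functional $\Lf$ built from the $p$-stabilization $f_{\alpha}$, with the constants $\tau(\baar{\W})$, $\N(\Delta(\W))^{1/2}$, $V_{p}(f,\W)$ and $\alpha_{\frakN(\mathfrak f)}^{-1}$ emerging from the Atkin--Lehner functional equations and the comparison $\langle W_{NP}f_{\alpha}^{\rho},f_{\alpha}\rangle\sim H_{p}(f)\langle f,f\rangle_{N}$. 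The only point you gloss over is that the Eisenstein constant term makes $\Phi(\W)$ a pseudo-measure rather than a measure, which the paper repairs by noting that $U_{\wp}\Phi(\W)$ is a genuine cuspidal measure and that $\Lf=\alpha_{\wp}^{-1}\Lf\circ U_{\wp}$.
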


This is essentially a special case of results of Panchishkin \cite{Pa} and Hida \cite{Hi}; we reprove it entirely here (see \S\ref{sec:L}, especially Theorem \ref{theo:interpolate}) because the precise construction of $L_{p}(f_E)$ will be crucial for us. It is obtained, using a technique of Hida and  Perrin-Riou,  by applying a $p$-adic analogue of the functional ``Petersson product with $f$'' to a  convolution $\Phi$ of Eisenstein and theta measures  on $\mathcal{G}'$ valued in $p$-adic modular forms (so that $\Phi=\Phi(\W)$ is an analogue of the kernel of the classical Rankin--Selberg convolution).  The approach we follow is adelic; one novelty introduced here is that the theta measure is constructed via the Weil representation, which seems very natural and would generalise well to higher rank cases.

\medskip

On the other hand,  Manin \cite{maninjl}, Dimitrov \cite{dimitrov} and others
have constructed a $p$-adic $L$-function $L_{p}(f,\cdot)\in \OO_{L}\llbracket\calG_{F}\rrbracket$ as an analogue of the standard $L$-function $L(f,s)$, where $\calG'_{F}$ is the Galois group of the maximal abelian extension of $F$ unramified outside $p$;  it is characteriesd by  the interpolation property
$$L_{p}(f,\chi)=\chi(d_{F}^{(p)}){\tau(\baar{\chi})\N(\mathfrak{f}(\chi))^{1/2}\over \alpha_{{\frak f}(\chi)}}
{L(f,\baar{\chi},1)\over \Omega_{f}^{+} }$$
for all  finite order characters $\chi$ of conductor ${\frak f} (\chi)$ which are trivial at infinity and ramified at all primes $v\vert p$. (Here $\Omega_{f}^{+}$ is a suitable period, cf. \S\ref{sec:realperiods}, and $\tau(\chi)$ is again a  normalised Gau\ss\ sum.) The corresponding formula for complex $L$-functions implies a factorisation  \eqref{factoris2}
\begin{gather}\label{factoris}
L_{p}(f_{E},\chi\circ\frakN)
=  \chi(\Delta)^{2}{\Omega_{f}^{+}\Omega_{{f}_{\eps}}^{+}\over D_{E}^{-1/2}\Omega_{f}} 
   L_{p}(f,\chi)L_{p}(f_{\eps},\chi),\end{gather}where $f_{\eps}$ is the form with coefficients $a(f_{\eps},m)=\eps(m)a(f,m)$ and $D_{E}=\N(\Delta)$.

\subsection*{Heegner points on Shimura curves and the main theorem} Suppose that $\eps(N)=(-1)^{g-1}$, where $g=[F:\Q]$. Then for each embedding $\tau\colon  F\to \C$, there is a quaternion algebra $B(\tau)$ over $F$ ramifed exactly at the finite places $v\vert N$ for which $\eps(N_{v})=-1$ and the infinite places different from $\tau$; it admits an embedding $\rho\colon E\hookrightarrow B(\tau)$, and  we can consider an order $R$ of $B(\tau)$ of discriminant $N$ and containing $\rho(\OO_E)$. These data define a \emph{Shimura curve} $X$. It is an algebraic curve over~$F$, whose complex points for any embedding $\tau\colon  F\to \C$ are described by 
$$X(\C_\tau)=B(\tau)^\times\bks \frakh^\pm\times \widehat{B}(\tau)^\times/\widehat{F}^\times\widehat{R}^\times \cup \mathrm{\{cusps\}}.$$
It plays the role of the modular curve $X_0(N)$ in the works of Gross--Zagier \cite{GZ} and Perrin-Riou \cite{PR} who consider the case $F=\Q$ and $\eps(v)=1$ for all $v\vert N$ (it is only in this case that the set of cusps is not empty).

The curve $X$ is connected but not geometrically connected. Let $J(X)$ be its Albanese ($\cong$ Jacobian) variety; it is an abelian variety defined over $F$, geometrically isomorphic to the product of the Albanese varieties of the geometrically connected components of $X$. There is a natural  map $\iota\colon X\to J(X)\otimes \Q$ given by $\iota(x)=[x]-[\xi]$, where $[\xi]\in\mathrm{Cl}(X)\otimes\Q$ is a canonical divisor class constructed in \cite{shouwu} having degree~$1$ in every geometrically connected component of $X$; an integer multiple of $\iota$ gives a morphism $X\to J(X)$ defined over $F$.

As in the modular curve case, the curve $X$ admits a finite collection of \emph{Heegner points} defined over the Hilbert class field $H$ of $E$ and permuted simply transitively by $\Gal(H/E)$. They are the points represented by $({x_0},t)$  for $t\in \widehat{E}^\times/E^\times\widehat{F}^\times\widehat{\OO}_E^\times$ when we use the complex description above and view $E\subset B$ via $\rho$.  We let $y$ be any such Heegner point, and let $[z]$ denote the class
$$[z]=u^{-1}\iota \left(\Tr_{H/E}y\right)\in J(X)(E)\otimes\Q,$$
where $u=[\OO_E^\times:\OO_F^\times]$.

As a consequence of Jacquet--Langlands theory, the Hecke algebra on Hilbert modular forms of level $N$ acts  through its quaternionic quotient on $J(X)$. Let $z_f\in J(X)(E)\otimes\baar{\Q}$ be the $f$-component of $[z]$.

\paragraph{Heights and the formula} On any curve~$X$ over a number field~$E$, there is a notion  (\S\ref{sec:height pairing})  of $p$-adic height  $\langle\ , \ \rangle_\ell$ attached to the auxiliary choices of splittings of the Hodge filtrations on $H^{1}_{\rm dR}(X/E_{w})$ for $w \vert p$ and of  a \emph{$p$-adic logarithm} $\ell\colon E_\A^\times/E^\times\to \Q_p$. It is a symmetric bilinear pairing on the group of degree zero divisors on $X$ modulo rational equivalence, which we can view as a pairing on $J(X)(E)$. More generally, for any abelian variety $A/E$ there is defined a $p$-adic height pairing on $A(E)\times A^{\vee}(E)$. In our case, there is a canonical choice for the Hodge splittings on the $f$-components of the Albanese variety $J(X)$, given by the unit root subspaces, and we choose our height pairing on $J(X)$ to be compatible with this choice.

 Under the assumption $\eps(N)=(-1)^{g-1}$, the value $L_{p}(f_E, \one)$ is zero by  the complex functional equation and the interpolation property; in fact, we have more generally $L_{p}(f_{E}, \W)=0$ for 
any anticyclotomic character $\W$ of $\calG$. We can then consider its derivative in a cyclotomic direction. Let thus $\W$ be a Hecke character of $E$ induced from a Hecke character of $F$ taking values in $1+p\Z_{p}\subset \Z_{p}^{\times}$, and assume $\W$ is ramified at all places dividing~$p$.   The derivative of $L_{p}(f_E)$ in the $\W$-direction is
$$L_{p,\W}'(f_E,\one)=\left.{d\over ds}\right\rvert_{s=0} L_{p}(f_E)(\W^s).$$
\begin{theoA}\label{theoremB} 
Assume that $\Delta_{E/F}$ is totally odd and that every prime $\wp\vert p$ is a principal ideal in $\OO_{F}$ and splits in $\OO_{E}$. Suppose that $\eps_{E/F}(N)=(-1)^{g-1}$.  Then $L_{p}(f_{E},\one)=0$ and 
$$L_{p,\W}'(f_E,\one)= D_{F}^{-2}\prod_{\wp\vert p} \left(1-{1\over\alpha_\wp}\right)^{2}
 \left(1-{1\over \eps(\wp)\alpha_\wp}\right)^{2} \langle z_f, z_f\rangle_{\W}$$
where $\langle \ , \ \rangle_{\W}$ is the height pairing on $J(X)(E)$ associated with the logarithm $\ell=\left.{d\over ds}\right\rvert_{s=0}\W^{s}$.
\end{theoA}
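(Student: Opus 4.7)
The plan is to carry out the adelic analogue of Perrin-Riou's proof \cite{PR}, adapted to Shimura curves in the framework of Zhang \cite{shouwu, asian}. The starting point is the construction of $L_{p,\alpha}(f_E)$ in Theorem~\ref{theoremA} as a $p$-adic Petersson product
\[
L_{p,\alpha}(f_E)(\W) = \langle f, \Phi(\W)\rangle_{p},
\]
where $\Phi$ is the convolution of Eisenstein and theta measures on $\calG'$ with values in $p$-adic modular forms. The vanishing of $\Phi(\one)$ modulo the kernel of $\langle f, \cdot\rangle_{p}$ follows from the functional equation; differentiating in the cyclotomic $\W$-direction then gives
\[
L'_{p,\W}(f_E, \one) = \langle f, \Phi'(\one)\rangle_{p}.
\]
The first main task is thus to compute the $q$-expansion of the derived kernel $\Phi'(\one)$ and to show that, as in the complex Gross--Zagier calculation, it decomposes into a sum of ``local'' arithmetic generating series indexed by the finite places of $F$.

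On the geometric side I would use the standard decomposition of the $p$-adic height pairing as a sum of local symbols
\[
\langle z_f, z_f\rangle_{\W} = \sum_{v} \langle z_f, z_f\rangle_{\W, v}
\]
over finite places $v$ of $E$. Away from $p$, each local pairing is computed via arithmetic intersection theory on an integral model of the Shimura curve $X$: the contribution is the local multiplicity of Heegner divisors (modified by the canonical class $[\xi]$) at $v$, twisted by the local component of the logarithm $\ell$. At places $w\mid\wp$ above $p$, the local pairing is defined using the unit-root Hodge splittings coming from the ordinariness of $f$; here the hypotheses that $\wp$ is principal and splits in $E$ produce a clean ordinary decomposition of the relevant formal group, and let one express the local height in terms of $p$-adic logarithms of CM periods of Heegner divisors.

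The heart of the argument is then to match the two sides place by place. At primes $v\nmid p$, the Eisenstein-theta local coefficients in $\Phi'(\one)$ should agree, via Jacquet--Langlands, with the local arithmetic intersection multiplicities, in analogy with Zhang's computations but at the derivative level. The factor $\prod_{\wp\mid p}(1-1/\alpha_\wp)^{2}(1-1/\eps(\wp)\alpha_\wp)^{2}$ in the theorem is expected to emerge from the mismatch between the $p$-stabilised local Eisenstein coefficients that appear in $\Phi$ and their unstabilised counterparts entering the geometric side. The principal obstacle I anticipate lies in the local computation at primes above $p$: there, the analytic derivative of the kernel and the $p$-adic local height each require a delicate analysis that has no complex analogue, and their comparison hinges on a $p$-adic logarithmic calculation of CM periods. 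The assumptions that each $\wp\mid p$ is principal and split in $E$ are designed precisely to make this final step tractable, and should play the decisive role there.
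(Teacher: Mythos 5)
Your overall architecture (kernel $\Phi$, differentiation in the cyclotomic direction, decomposition of the height into local symbols, matching with Zhang's intersection numbers away from $p$) is the right one, but the treatment of the places above $p$ is where the proposal goes wrong, and this is the decisive step. You propose to \emph{compute} the local heights at $w\mid p$ in terms of $p$-adic logarithms of CM periods and then to \emph{match} them against the analytic derivative at $p$. There is nothing on the analytic side to match: since every $\wp\mid p$ is assumed split in $E$, the Fourier coefficients of $\Phi'_{\W}$ receive \emph{no} contribution from the places above $p$ (the local factor $\sigma^{n}_{s,v}$ at a split place never vanishes to first order, so $b'_v(m)=0$ for $v$ split). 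Consequently the correct statement is not a comparison at $p$ but a \emph{vanishing}: one must show that $\Lf(\Psi_p)=0$, i.e.\ that the sum of local symbols $\langle z_f, T(m)z\rangle_w$ for $w\mid p$ dies under the ordinary projection. The paper does this via the universal-norm argument (Lemma \ref{unorm} and Lemma \ref{eigen}): ordinarity of $f$ makes $z_f$ almost a universal norm in the cyclotomic $\Z_p$-extension of $E_w$, the boundedness property of the canonical (unit-root) local symbol then forces $\langle z_f, T(m)z\rangle_w$ into $c^{-1}p^n\Z_p$ for all $n$, hence it is zero. Without this idea your proof does not close, because no finite "CM period computation" at $p$ will produce the required identity.

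Two further points. First, the roles of the hypotheses are not as you describe: the splitting of $\wp$ in $E$ is what makes the analytic kernel trivial at $p$ and the CM points ordinary, while the principality of $\wp$ enters only in the purely combinatorial comparison of Fourier coefficients away from $p$ (Proposition \ref{psifinetc}, where one needs a totally positive generator of $\wp$ to perform the substitution $n_i=\wp^{t-i}n_0$); neither is used in a period calculation at $p$. Second, the "standard decomposition of the height into local symbols" is not directly available here: on a Shimura curve without cusps the divisors $z$ and $T(m)z$ have intersecting supports, so one must pass to Besser's $p$-adic Arakelov pairing on $\CH^{\Ar}(\X)$ and work in the quotient spaces $\bcalS$, $\bcalS^{\rm ord}$ of Fourier coefficients modulo derivatives of $\sigma_1$ and $r$ and modulo $\Ker(\Lf)$; your proposal silently assumes disjoint supports. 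Finally, the Euler factors do not arise from a stabilised-versus-unstabilised mismatch of Eisenstein coefficients: they come from the normalising factor $H_p(f)$ built into the definition of $L_{p,\alpha}$ (itself produced by the Petersson norm comparison of Lemma \ref{period}) combined with the identity $(U_\wp^4-U_\wp^2)\Phi'_\W\sim (U_\wp-1)^4\Psi_{\W,\rm fin}$ and the eigenvalue relations $\Lf\circ U_\wp=\alpha_\wp\Lf$.
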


The hypothesis that the primes $\wp\vert p$ are principal is a technical assumption which intervenes only in Proposition  \ref{psifinetc}.\footnote{A somewhat more sophisticated approach to our main result should remove this and other restrictions \cite{pyzz}.}  The assumption that they split in $E$ is essential to the argument but, like the  assumption on $\Delta_{E/F}$, it can be removed \emph{a posteriori} if the left-hand side of the formula below  is nonzero  -- see \S\ref{kobtrick}.

\subsection*{Applications to the conjecture of Birch and Swinnerton-Dyer} 
It is conjectured that to any Hilbert modular newform $f$ one can attach a simple abelian variety $A=A_{f}$ over $F$, characterised uniquely up to isogeny\footnote{Thanks to Faltings's isogeny theorem \cite{faltings}.} by the equality of $L$-functions 
\begin{align*}
L(A,s)=\prod_{\sigma\colon M_{f}\to\C}L(f^{\sigma},s).
\end{align*}
 Here $M=M_{f}$ is the field generated by the Fourier coefficients of $f$; $A$ has dimension $[M:\Q]$ and its endomorphism algebra contains $M$ (we say that $A$ is \emph{ of $GL_{2}(M)$-type}; in fact since $F$ is totally real, $A$ is of \emph{strict} $GL_{2}$-type, that is, its endomorphism algebra {equals} $M$ -- see e.g. \cite[Lemma 3.3]{yzz}). The conjecture is known to be true \cite[Theorem B]{shouwu} when either $[F:\Q]$ is odd or $v(N)$ is odd for some  finite place $v$ (the assumptions of Theorem B above imply that one of these conditions holds); in this case $A$ is a quotient $\phi$ of $J(X)$ for a suitable Shimura curve $X$ of the type described above. Viceversa any abelian variety of $GL_{2}$-type (for some field $M$) over a totally real field $F$ is conjectured to be associated with a Hilbert modular form $f$ as above. This is known to be true for all elliptic curves $A$ over $F$ when $F$ is $\Q$  or a  real quadratic field; and for all but possibly finitely many geometric isomorphism classes if $F$ is a general totally real field (see \cite{bao}, whose result is somewhat stronger result than this, and \cite{six}; the results build on the method of Wiles for $F=\Q$).

In view of known ${\rm Aut}(\C/\Q)$-equivariance properties  of automorphic $L$-functions and the above equality, the order of vanishing of $L(A,s)$ at $s=1$ will be an integer multiple $r[M:\Q]$ of the dimension of $A$. We call $r$ the \emph{$M$-order of vanishing} of $L(A,s)$ or the \emph{analytic $M$-rank} of $A$.

\begin{conj*}[Birch and Swinnerton-Dyer] Let $A$ be an abelian variety of $GL_{2}(M)$-type over a totally real field $F$ of degree $g$. 
\begin{enumerate}
\item The $M$-order of vanishing of $L(A,s)$ at $s=1$ is equal to the dimension of $A(F)_{\Q}$ as $M$-vector space.
\item The Tate-Shafarevich group $\Sha(A/F)$ is finite, and the leading term of $L(A,s)$ at $s=1$ is given by 
$${L^*(A,1)\over \Omega_{A}}={D_{F}^{-d/2} }|\Sha(A/F)| R_{A}\prod_{v\nmid\infty }c_{v} ={\rm BSD}\,(A),$$
where $d=\dim A=[M:\Q]$, the $c_{v}$ are the  Tamagawa numbers of $A$ at finite places (almost all equal to~$1$),
$$\Omega_{A}=\prod_{\tau\colon F\to \R} \int_{A(\R_{\tau})}|\omega_{A}|_{\tau}$$
for a N\'eron differential\footnote{When it exists, which is only guaranteed if $F=\Q$. Otherwise, we take for $\omega_{A}$ any generator of $H^{0}(A, \Omega^{d}_{A/F})$ and to define $\Omega_{A}$ we divide by the product of the indices $[H^{0}(\mathcal{A}_{v}, \Omega^{d}_{{\mathcal A}_{v}/\OO_{F,v}}):\OO_{F,v}\wtil{\omega_{A}}]$ of (the extension of) $\omega_{A}$ in the space of top differentials on the local N\'eron models ${\mathcal A}_{v}/\OO_{F,v}$ of $A$.} 
 $\omega_{A}$, and 
$$R_{A}={\det(\langle x_{i},y_{j}\rangle)\over [A(F):\sum \Z x_{i}][A^{\vee}(F):\sum \Z y_{j}]}$$
is the regulator of the N\'eron-Tate height paring on $A(F)\times A^{\vee}(F)$, defined using any subsets $\{x_{i}\}$, $\{y_{j}\}$ of $A(F)$, $A^{\vee}(F)$ inducing bases of $A(F)_{\Q}$ and $A^{\vee}(F)_{\Q}$.
\end{enumerate}
\end{conj*}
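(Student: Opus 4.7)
The full conjecture lies well beyond current methods, so the realistic goal --- and the one advertised in the introduction --- is to extract the low analytic-rank cases of parts~(1) and~(2) from Theorem~B combined with Iwasawa-theoretic and Euler-system input, in the style of Perrin-Riou \cite{PR} and Kolyvagin. The bridge in all cases is the factorisation \eqref{factoris}, which up to explicit periods writes $L_{p,\alpha}(f_E,\cdot)$ as the product of the standard $p$-adic $L$-functions $L_{p,\alpha}(f,\cdot)$ and $L_{p,\alpha}(f_\eps,\cdot)$, thereby transferring information between the $L$-function of $A$ and the Heegner point $z_f$ on the quaternionic side.

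For analytic $M$-rank one the plan is as follows. Assume $L(f,1)=0$, $L'(f,1)\neq 0$, and $L(f_\eps,1)\neq 0$ (the last achieved for a suitable CM extension $E$ by a Friedberg--Hoffstein nonvanishing result); the factorisation then shows $L_{p,\alpha}(f_E,\one)=0$, consistent with Theorem~B, while the cyclotomic derivative $L'_{p,\alpha,\W}(f_E,\one)$ is non-zero (up to the usual exceptional-zero caveat at primes $\wp\vert p$). Theorem~B identifies this derivative with $\langle z_f, z_f\rangle_\W$, forcing $z_f$ to be non-torsion in $J(X)(E)\otimes M$ provided the $p$-adic height pairing is nondegenerate on the $f$-isotypic component. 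Combined with the Kolyvagin--Howard Euler system for Heegner points, adapted to the Hilbert modular setting, this yields $\dim_M A(F)_\Q=1$ and finiteness of the $p$-primary part of $\Sha(A/F)$, which is part~(1) in analytic rank one. The analytic rank zero case is complementary and uses instead the opposite sign setting together with a Waldspurger-type toric period formula on a definite quaternion algebra.

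Part~(2), the leading-term formula, is the main obstacle. The $p$-adic BSD conjecture of Schneider and Perrin-Riou predicts the leading coefficient of $L_{p,\alpha}(f,\cdot)$ in terms of the algebraic right-hand side of~(2) together with a $p$-adic regulator and Euler factors at $p$; establishing it requires a cyclotomic Iwasawa Main Conjecture for $A$, which is only partially known. Pulling this prediction back through Theorem~B and the factorisation \eqref{factoris} gives a $p$-adic BSD formula for $A$, but translating it into the archimedean formula requires dividing by the Euler factors $(1-\alpha_\wp^{-1})^2(1-\eps(\wp)\alpha_\wp^{-1})^2$ (which can exceptionally vanish) and by the unknown ratio of $p$-adic to N\'eron--Tate regulators. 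Finally, the $M$-rank $\geq 2$ regime lies entirely beyond this circle of ideas, since $z_f$ is automatically torsion there by Theorem~B itself.
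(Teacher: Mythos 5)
You were asked about a statement that the paper itself records as an open conjecture (the unnumbered Birch and Swinnerton-Dyer conjecture); the paper offers no proof of it and none is expected, so there is nothing to compare your argument against line by line. Your proposal, as you candidly say, is not a proof of the statement but a programme for extracting its low analytic-rank consequences, and that programme is in essence the content of the paper's Theorem~C and its proof: use the factorisation \eqref{factoris}, a CM field $E$ with $L(f_{\eps},1)\neq 0$ (obtained from \cite{bfh}, \cite{wald}), Theorem~B to relate the cyclotomic derivative of $L_{p,\alpha}(f_E)$ to $\langle z_f,z_f\rangle_{\W}$, Zhang's complex Gross--Zagier formula together with Kolyvagin--Logachev to get the rank part and finiteness of $\Sha(A/F)$, and Schneider's arithmetic $p$-adic BSD formula, conditional on the Iwasawa main conjecture, compared against the analytic side \`a la Perrin-Riou. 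The hypothesis of nondegeneracy of the $p$-adic height pairing that you flag is exactly the paper's condition 1(b), so that caveat is aligned.

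Where your outline genuinely diverges from, or falls short of, what the paper actually does: first, the analytic rank-zero case in the paper follows directly from the interpolation property defining $L_p(A)$ and Zhang's results, with no separate Waldspurger-type toric period argument on a definite quaternion algebra. Second, and more importantly, the paper's passage from automorphic to motivic normalisations rests on the conjecture on periods $\Omega_A\sim\prod_{\sigma}\Omega_{f^\sigma}^{+}$, which is needed even to define $L_p(A)$ and to convert $L^*(A,1)/\Omega_A$ into the quantities controlled by the Gross--Zagier formulas; your outline never mentions this, and without it the leading-term comparison cannot even be formulated as in the paper. Third, the paper's closing step is the observation that the ratios $\prod_\sigma{\rm GZ}(f_E^\sigma)/{\rm BSD}(A)$ and $\prod_\sigma{\rm GZ}_p(f_E^\sigma)/{\rm BSD}_p(A)$ are equal rational numbers (because the Heegner points $\phi(z_{f^\sigma})$ give a Galois-invariant orthogonal basis of $A(F)\otimes\baar{\Q}$), which is what lets one cancel the unknown regulator ratio you worry about; this device is absent from your sketch. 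Finally, be aware that even with all of this, the paper only obtains the BSD formula up to a $p$-adic unit under the Iwasawa main conjecture, and the conjecture as stated — in particular for $M$-rank $\geq 2$ — remains entirely open, so no completion of your proposal into an actual proof is possible by these methods.
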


\medskip

By the automorphic description of $L(A,s)$ and results of Shimura \cite{shimura}, we know that $L(A,s)/\prod_{\sigma\colon M_{f}\to\C}\Omega_{f^{\sigma}}^{+}$ is an algebraic number. Comparison with the Birch and Swinnerton-Dyer conjecture suggests the following conjecture.

\begin{conj*}[Period Conjecture] We have
$$\Omega_{A}\sim\prod_{\sigma\colon M_{f}\to\C}\Omega_{f^{\sigma}}^{+}\qquad\textrm{in }\C^{\times}/\baar{\Q}^{\times}.$$
\end{conj*}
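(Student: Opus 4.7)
The plan is to reduce the claim to classical algebraicity theorems of Shimura for periods of Hilbert modular forms, using the Shimura curve parametrisation $\phi\colon J(X)\twoheadrightarrow A$ that is available in the cases where the existence of $A$ is currently known (those discussed before Theorem B).

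First, I would pull back the top differential $\omega_A \in H^0(A,\Omega^d_{A/F})$ under $\phi^{\vee}$, followed by the Albanese identification $\Lie(J(X))^{\vee} \cong H^0(X,\Omega^1_{X/F})$. Up to an $F$-rational scalar, this yields a collection of $1$-forms on $X$, one for each Galois conjugate $f^\sigma$ ($\sigma\colon M_f\to\baar{\Q}$), lying in the corresponding Hecke-isotypic components. Complex-analytically, at each infinite place $\tau$ of $F$, each such $1$-form is identified via the Eichler--Shimura isomorphism with a quaternionic modular form $f^\sigma_{B(\tau)}$ on $X(\C_\tau)$, namely the $\tau$-component of the Jacquet--Langlands transfer of $f^\sigma$ to $B(\tau)$.

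Second, I would express each archimedean integral $\int_{A(\R_\tau)}|\omega_A|_\tau$ in terms of the fundamental periods of the $(\sigma,\tau)$-isotypic components of $J(X)(\C_\tau)$, tracking the action of complex conjugation on $X_\C$. By Shimura's algebraicity results for ratios of periods of Hilbert modular forms, combined with Shimizu's formula relating the Petersson norms of Jacquet--Langlands transfers, each such factor should be related, up to an algebraic scalar, to the $\tau$-archimedean component of the Petersson norm $\langle f^\sigma,f^\sigma\rangle_N$. Taking the product over $\tau$ and $\sigma$ and unravelling the definition of $\Omega_{f^\sigma}^+$ from \S\ref{sec:realperiods} should then yield $\Omega_A \sim \prod_\sigma \Omega_{f^\sigma}^+$ in $\C^\times/\baar{\Q}^\times$.

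The main obstacle is the bookkeeping of period indices: $\Omega_A$ is naturally packaged as a product over the $g$ real embeddings $\tau\colon F\to\R$, whereas $\prod_\sigma \Omega_{f^\sigma}^+$ is indexed by the $[M_f:\Q]$ embeddings of the coefficient field $M_f$. Reconciling the two requires delicate compatibilities between Shimura's period symbols at distinct infinite places of $F$, essentially amounting to the Blasius--Panchishkin motivic period conjecture for $\mathrm{GL}_2$ over a totally real field. This is known in sufficient generality to make the statement plausible and to yield it in many special situations, but a fully general proof seems to require motivic machinery beyond what is currently available, which is presumably why the statement is posed here only as a conjecture.
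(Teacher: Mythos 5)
This statement is posed in the paper as a conjecture, not a theorem, so there is no proof to compare against; what the paper supplies in \S\ref{sec:periods} is precisely the reduction you describe, together with a survey of the known cases. Your first two steps --- pulling back $\omega_{A}$ through the Shimura-curve parametrisation $\phi\colon J(X)\to A$ and writing each archimedean factor $\int_{A(\R_{\tau})}|\omega_{A}|_{\tau}$ as $\prod_{\sigma}\Omega^{+}_{f^{\sigma}_{B(\tau)}}$ up to $F^{\times}$ --- reproduce the paper's own reformulation \eqref{eqper}, namely $\Omega_{f}^{+}\sim\prod_{\tau}\Omega^{+}_{f_{B(\tau)}}$ in $\C^{\times}/(M_{f}F)^{\times}$, which is a strengthening of Shimura's conjecture on $P$-invariants. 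You are also right that this last comparison is the genuine obstruction and that the full statement is out of reach: the paper records it as known for $F=\Q$ (Shimura for algebraicity, Prasanna for rationality), for CM abelian varieties (Blasius), and only partially for general $F$ (Yoshida, under nonvanishing hypotheses).

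One substantive correction to the middle of your argument: Shimizu's formula and the comparison of Petersson norms across the Jacquet--Langlands transfer belong to the \emph{quadratic}-period story, i.e.\ to Conjecture \ref{quadper} and the factorisation \eqref{eqperquad} of $\Omega_{f}=(8\pi^{2})^{g}\langle f,f\rangle_{N}$ in terms of the $\Omega_{f_{B(\tau)}}$ (this is where Harris's theorem lives). The plus-periods $\Omega^{+}_{f^{\sigma}_{B(\tau)}}$ are defined by pairing holomorphic forms against the $+1$-eigenspace of rational homology as in \eqref{periodpairing}, not by Petersson norms, and there is no analogue of Shimizu's formula comparing these rational structures at distinct infinite places $\tau$; that comparison \emph{is} the $P$-invariant conjecture. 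So the ``delicate compatibilities'' you defer to your final paragraph are not bookkeeping but the entire content of the statement, and your step invoking Petersson norms would, if it worked as written, silently prove the conjecture. Since you correctly conclude that the statement must remain conjectural, your proposal should be read as a faithful account of the paper's motivation and reduction rather than as a proof.
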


The conjecture is known for $F=\Q$ \cite{shimura31} or when $A$ has complex multiplication (over $\baar{\Q}$) \cite{blasius}; see \S\ref{sec:periods} below for a more precise conjecture and some further evidence and motivation.

 Assuming the conjecture,  we can define  a $p$-adic $L$-function $L_{p}(A)$ for~$A$ by 
$$L_{p}(A)={ \prod_{\sigma}\Omega_{f^{\sigma}}^{+} \over \Omega_{A}}     \prod_{\sigma\colon M_{f}\to\C} L_{p}(f^{\sigma}) $$
for any prime $p$ such that $A$ has good ordinary reduction at all primes above $p$.

 Then, fixing a ramified Hecke character $\nu\colon \calG_{F}'\to 1+p\Z_{p}\subset \Z_{p}^{\times}$ which we omit from the notation,  one can formulate a $p$-adic version of the Birch and Swinnerton-Dyer conjecture similarly as above for $L_{p}(A,\nu^{s})$:\footnote{Here $s\in\Z_{p}$ and the central point is $s=0$, corresponding to $\nu^{0}=\one$.} the conjectural formula reads
$$\prod_{\wp|p}(1- \alpha_{\wp}^{-1})^{-2}L_{p}^{*}(A,\one)={\rm BSD}_{p}(A)$$
where ${\rm BSD}_{p}(A)$ differs from  ${\rm BSD}(A)$ only in the regulator term, which is now the regulator of the $p$-adic height pairing on $A(F)\times A^{\vee}(F)$ associated with the $p$-adic logarithm $\ell$ deduced from $\nu$ as in Theorem \ref{theoremB}.
One can also formulate a main conjecture of Iwasawa theory for $L_{p}(A)$, see \cite{schneider}.

\medskip

Then, just as in \cite{PR}, we can deduce the following arithmetic application of Theorem \ref{theoremB}.

\begin{theoA}\label{theoremC} Assume that the Period conjecture holds for   the abelian varitey $A=A_f$. For an ordinary prime $p>2$ decomposing into principal prime ideals in $\OO_{F}$, we have:
\begin{enumerate}
\item The following are equivalent:
\begin{enumerate}
\item\label{prank1} The $p$-adic $L$-function $L_{p}(A,\nu^{s})$ has $M_{f}$-order of vanishing $r\leq 1$ at the central point.
\item\label{rank1} The complex  $L$-function  $L(A,s)$ has $M_{f}$-order of vanishing $r\leq 1$ at the central point and the $p$-adic height pairing associated with $\nu$ is non-vanishing on $A(F)$.
\end{enumerate}
\item If either of the above assumptions holds, the rank parts  of the classical and the $p$-adic Birch and Swinnerton-Dyer conjecture are true for $A$ and the Tate-Shafarevich group of $A$ is finite.
\item  If moreover the cyclotomic Iwasawa main  conjecture is true for $A$, then the classical and the $p$-adic Birch and Swinnerton-Dyer formulas for $A$ are true up to a $p$-adic unit.
\end{enumerate}
\end{theoA}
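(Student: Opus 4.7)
The plan is to follow Perrin-Riou \cite{PR}, combining Theorem~\ref{theoremB} with the complex Gross--Zagier formula for Shimura curves \cite{shouwu}, the Kolyvagin Euler system of Heegner points, and, for part~(3), the cyclotomic Iwasawa main conjecture for $L_{p,\alpha}(A)$. A preliminary step, used throughout, is to fix an auxiliary CM extension $E/F$ satisfying the hypotheses of Theorem~\ref{theoremB} and such that $\eps_{E/F}(N)=(-1)^{g-1}$ together with $L(f_\eps,1)\neq 0$; such an $E$ exists by standard quadratic-twist nonvanishing results of Waldspurger / Friedberg--Hoffstein type. The factorisation~\eqref{factoris} and its complex analogue then let us transfer information between the behaviour of $L(f_E,s)$ at $s=1$ (controlled by Heegner points) and that of $L(f,s)$ (controlling $A$), using $L(f_\eps,1)\neq 0$, equivalently $L_{p,\alpha}(f_\eps,\one)\neq 0$ by interpolation, as a nonvanishing constant.

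For the equivalence in part~(1), the $r=0$ case is immediate from the interpolation property of Theorem~\ref{theoremA}, since the Euler factors $(1-\alpha_\wp^{-1})^{2}$ are nonzero $p$-adic numbers. In the $r=1$ case, the complex Gross--Zagier formula on~$X$ gives $L'(f_E,1)\neq 0$ if and only if $z_f$ is non-torsion in $A(E)\otimes\baar{\Q}$, while Theorem~\ref{theoremB}, combined with the nonvanishing of its Euler product prefactor (which uses that all $\wp\vert p$ split in~$E$, so $\eps(\wp)=1$), gives $L'_{p,\alpha,\W}(f_E,\one)\neq 0$ if and only if $z_f$ is non-torsion and the $p$-adic height pairing is non-degenerate on the line $M_f\cdot z_f\subset A(F)\otimes\Q$. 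Factoring via~\eqref{factoris} converts these two equivalences into the stated one between~(a) and~(b).

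For part~(2), assume (a)--(b). In the $r=0$ case, the nonvanishing of the central value, combined with Kato's Euler system in the form extended to totally real fields (e.g.\ by Fouquet), gives $A(F)$ torsion and $\Sha(A/F)$ finite. In the $r=1$ case, $z_f$ is non-torsion, and the Kolyvagin/Zhang Euler system argument for Heegner points on Shimura curves \cite{II} gives $\dim_{M_f}A(E)\otimes\Q=1$ spanned by $z_f$, with $\Sha(A/E)$ finite; the $\eps$-decomposition of $A(E)\otimes\Q$, combined with $L(f_\eps,1)\neq 0$ forcing the $\eps$-component to be torsion, then yields $\dim_{M_f}A(F)\otimes\Q=1$ and $\Sha(A/F)$ finite. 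For part~(3), granting the cyclotomic main conjecture, the characteristic ideal of the dual Selmer group is generated by $L_{p,\alpha}(A)$ up to $p$-adic units; matching the explicit leading term supplied by Theorem~\ref{theoremB} on the Heegner side against the Selmer-group bound coming from Kolyvagin's system yields the $p$-adic BSD formula up to a $p$-adic unit, from which the classical formula follows by comparing leading terms via the period relation $\Omega_A\sim\prod_\sigma\Omega_{f^\sigma}^+$ and the interpolation of Theorem~\ref{theoremA}. The main obstacle is the careful invocation of these deep inputs in the required generality, especially Kolyvagin's theorem for Shimura curves over totally real fields and the cyclotomic main conjecture over~$F$ for~$f$.
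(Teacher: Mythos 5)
Your overall strategy is the paper's (Perrin-Riou's): deduce everything from the two Gross--Zagier formulas over an auxiliary CM field, the factorisation \eqref{factoris}, the Kolyvagin--Zhang Euler system, and (for part 3) the main conjecture compared against an arithmetic leading-term formula. Parts 1 and 2 are essentially right, modulo two remarks. First, your blanket choice of $E$ with $\eps_{E/F}(N)=(-1)^{g-1}$ \emph{and} $L(f_\eps,1)\neq 0$ is impossible when $A$ has analytic rank $0$: the sign condition forces $L(f_E,1)=0=L(f,1)L(f_\eps,1)$, so $L(f,1)\neq 0$ implies $L(f_\eps,1)=0$. The auxiliary field with $L(f_\eps,1)\neq 0$ is available (and needed) only in the rank-one case; since your rank-zero arguments do not actually use $E$, this is an inconsistency of presentation rather than a fatal error, but it must be corrected. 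Second, in part 1 the passage from ``height nonzero at $z_f$'' to ``height non-vanishing on $A(F)$'' already requires knowing that $\mathrm{Tr}_{E/F}\phi(z_{f,E})$ spans $A(F)\otimes\Q$ over $M_f$ (Kolyvagin--Zhang); you invoke this input only in part 2, but it is needed here.

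The genuine gap is in part 3, rank one. You cannot pass from the $p$-adic BSD formula (up to unit) to the classical one ``via the period relation and the interpolation of Theorem A'': at a central point where the value vanishes, interpolation says nothing about $L'(A,1)$. The paper's mechanism is to express the complex leading term via the complex Gross--Zagier formula and the $p$-adic leading term via Theorem \ref{theoremB}, and then to observe that
$\prod_\sigma\mathrm{GZ}(f_E^\sigma)/\mathrm{BSD}(A)=\prod_\sigma\mathrm{GZ}_p(f_E^\sigma)/\mathrm{BSD}_p(A)$
as \emph{rational numbers}, because the N\'eron--Tate and $p$-adic regulators are both computed on the same $\Gal(\baar{\Q}/\Q)$-invariant orthogonal basis of Heegner points $\phi(z_{f^\sigma})$ of $A(F)\otimes\baar{\Q}$. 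It is this cancellation of the two regulators, not interpolation, that transports the unit statement from the $p$-adic side to the complex side. Relatedly, your ``Selmer-group bound coming from Kolyvagin's system'' should be replaced by Schneider's arithmetic $p$-adic BSD leading-term formula for the characteristic series of the Selmer group, which is the object the main conjecture allows one to compare with $L_{p,\alpha}(A)$.
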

\begin{proof} In part 1, the statement follows trivially from the construction of $L_p (A)$ if $r=0$; if $r=1$, both conditions are equivalent to the assertion that for a suitable CM extension $E$, the Heegner point $z_f=z_{f,E}$ is nontorsion: this is obvious from our main theorem in case \ref{prank1}; in case \ref{rank1}, by the work of Zhang \cite{shouwu, asian}  (generalising Gross--Zagier \cite{GZ} and Kolyvagin \cite{koly, koly-log}),
the Heegner point
 $$P=\sum_{\sigma}{\rm Tr}_{E/F}\phi(z_{f^{\sigma},E})\in A(F)\otimes\Q$$
(with  $\phi\colon J(X)\to A$) generates $A(F)\otimes\Q$ as $M_{f}$-vector space, so that the $p$-adic height pairing on $A(F)$ is non-vanishing if and only if it is nonzero at $z_f$. Part 2 then follows from 1 and the results of Zhang \cite{shouwu, asian}. 

 Schneider \cite{schneider} proves an ``arithmetic'' version of the $p$-adic Birch and Swinnerton-Dyer formula for (the Iwasawa $L$-function associated with) $A$, which under the assumption of 3 can be compared to the analytic $p$-adic formula as explained in \cite{PR} to deduce  the $p$-adic Birch and Swinnerton-Dyer formula up to a $p$-adic unit.  In the analytic rank 0 case the classical Birch and Swinnerton-Dyer formula follows immediately. In the case of analytic rank~$1$, recall that the main result of \cite{shouwu, II} is, in our normalisation, the formula
$${L'(f_E,1)\over \Omega_{f}}={1\over D_{F}^{2} D_{E}^{1/2}}\langle z_f,z_f\rangle= :D_{E}^{-1/2} {\rm GZ}(f_{E})$$
(where $\langle\ , \ \rangle$ denotes the N\'eron--Tate height); whereas we introduce the notation ${\rm GZ}_{p}(f_{E})$ to write our formula (for any fixed ramified cyclotomic character $\W=\nu\circ\frakN$) as 
$$L'_{p}(f_{E}, \one)=\prod_{\wp\vert p} \left(1-{1\over\alpha_\wp}\right)^{2}
 \left(1-{1\over \eps(\wp)\alpha_\wp}\right)^{2}{\rm GZ}_{p}(f_{E}).$$
Then, after choosing $E$ suitably so that $L
(f_\eps,1)\neq0$ (which can be done by \cite{bfh}, \cite{wald}),  we can argue as in \cite{PR} to compare the $p$-adic and the complex  Birch and Swinnerton-Dyer formulas via the corresponding Gross--Zagier formulas to get the result. Namely, 
 we have
\begin{align*} {L^{*}(A,1)\over \Omega_{A} {\rm BSD} (A)}
&={\prod_{\sigma} \Omega_{f^{\sigma}}^{+}  \over \Omega_{A}}
{1\over {\rm BSD}(A)} 
\prod_{\sigma}
{ L'(f^{\sigma}_{E}, 1)\over \Omega_{f^{\sigma}}} 
{\Omega_{f^{\sigma}}\over \Omega_{f^{\sigma}}^{+}\Omega_{f^{\sigma}_{\eps}}^{+}}
{\Omega_{f_{\eps}^{\sigma}}^{+}\over L(f_{\eps}^{\sigma},1)}  \\
&={\prod_{\sigma} \Omega_{f^{\sigma}}^{+}  \over \Omega_{A}}
{\prod_{\sigma}{\rm GZ}(f_{E}^{\sigma}) \over {\rm BSD}(A)} 
\prod_{\sigma}
{D_{E}^{-1/2}\Omega_{f^{\sigma}}\over \Omega_{f^{\sigma}}^{+}\Omega_{f^{\sigma}_{\eps}}^{+}}
{\Omega_{f_{\eps}^{\sigma}}^{+}\over L(f_{\eps}^{\sigma},1)}
\end{align*}
by the complex Gross--Zagier formula and the factorisation of $L(f_{E},s)$. 
Similarly, 
\begin{align*} \prod_{\wp|p}(1- \alpha_{\wp}^{-1})^{-2} {L^{*}_{p}(A,1)\over {\rm BSD}_{p} (A)}
&={\prod_{\sigma} \Omega_{f^{\sigma}}^{+}  \over \Omega_{A}}
{\prod_{\sigma}{\rm GZ}_{p}(f_{E}^{\sigma}) \over {\rm BSD}_{p}(A)} 
\prod_{\sigma}
{D_{E}^{-1/2}\Omega_{f^{\sigma}}\over \Omega_{f^{\sigma}}^{+}\Omega_{f^{\sigma}_{\eps}}^{+}}
{\Omega_{f_{\eps}^{\sigma}}^{+}\over L(f_{\eps}^{\sigma},1)} 
\end{align*}
by the $p$-adic Gross--Zagier formula, the factorisation of $L_{p}(f_{E})$ and the interpolation property of $L_{p}(f_{\eps})$. Since we are assuming to know that the left-hand side of the last formula is a $p$-adic unit, the result follows from observing the equality
$${\prod_{\sigma}{\rm GZ}(f_{E}^{\sigma}) \over {\rm BSD}(A)} ={\prod_{\sigma}{\rm GZ}_{p}(f_{E}^{\sigma}) \over {\rm BSD}_{p}(A)}$$
 of rational numbers.\footnote{The rationality of the ratios follows from the fact that the $z_{f^{\sigma}}$ 
 essentially belong to $J(X)(F)$ -- that is, they belong to  the $+1$-eigenspace for the action of $\Gal(E/F)$ on $J(X)(E)\otimes\baar{\Q}$ -- and that in this sense, their images $\phi(z_{f^{\sigma}})$ form a $\Gal(\baar{\Q}/\Q)$-invariant basis of $A(F)\otimes\baar{\Q}$, orthogonal for the height pairing.}
\end{proof}

Alternative approaches to the Birch and Swinnerton-Dyer formula in rank one have recently been proposed, at least for the case $F=\Q$, by Wei Zhang \cite{weikol} and Xin Wan.
  
\begin{enonce*}[remark]{Discussion of the assumptions} The conjecture on periods could be dispensed of if one were willing to work with  a ``wrong'' $p$-adic $L$-function for $A$ (namely, one without the period ratio appearing in the definition above). Then at least the rank part of the $p$-adic Birch and Swinnerton-Dyer conjecture makes sense and parts 1 and 2 of the Theorem hold.  The nonvanishing of the $p$-adic height pairing is only known for CM elliptic curves \cite{bertrand}. The Iwasawa main conjecture is known in most cases for ordinary elliptic curves over $\Q$ thanks to the work of Rubin, Kato and Skinner--Urban (see \cite{SU}). For Hilbert modular forms,  one divisibility in the CM case is proved by Hsieh \cite{hsieh}; results on the general case are obtained by Wan \cite{xin wan}. We can then record the following unconditional result, whose assumptions are inherited from \cite{hsieh}.
\end{enonce*}

\begin{theoA} Let $A/F$ be an elliptic curve  with complex multiplication by the ring of integers $\OO_{K}$ of an imaginary quadratic field $K$. Let $K'=FK$ and let $h_{K'}^{-}=h_{K'}/h_{K}$ be the relative class number of $K'/F$. Let $p\nmid 6h_{K'}^{-}D_{F}$ be a prime such that for all primes $\wp\vert p$,  $\wp$ is principal and $A$ has good ordinary reduction at $\wp$. Suppose that ${\rm ord}_{s=1}L(A,s)\leq 1$. Then 
$$v_{p}\left({L^{*}(A,1)\over R_{A} \Omega_{A}}\right)\leq v_{p}\left(|\Sha(A/F)| \prod_{v\nmid\infty }c_{v}\right).$$
\end{theoA}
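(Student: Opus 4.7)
The plan is to specialise the proof of Theorem~\ref{theoremC}(3) to the CM setting, where each of its three conditional inputs becomes an unconditional theorem. The conjecture on periods, needed to define $L_{p,\alpha}(A,\nu^{s})$, is proved for CM abelian varieties by Blasius \cite{blasius}. The non-degeneracy of the cyclotomic $p$-adic height, needed to convert analytic rank into $p$-adic analytic rank via Theorem~\ref{theoremC}(1), is proved for CM elliptic curves with good ordinary reduction at $p$ by Bertrand \cite{bertrand}. One divisibility of the cyclotomic Iwasawa main conjecture for the CM Hilbert modular form $f$ attached to $A$ is provided by Hsieh \cite{hsieh}, under the numerical hypotheses encompassed by $p\nmid 6h_{K'}^{-}D_{F}$ and the principality of primes $\wp|p$.

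With these three ingredients the argument of Theorem~\ref{theoremC}(3) goes through almost verbatim. The assumption $\mathrm{ord}_{s=1}L(A,s)\leq 1$ combined with Bertrand's theorem places us in $p$-adic analytic $M_{f}$-rank $\leq 1$, so that Theorem~\ref{theoremC}(1)--(2) hold unconditionally: the rank part of classical BSD for $A$ is true and $\Sha(A/F)$ is finite. Hsieh's one divisibility, translated via Schneider's arithmetic interpretation \cite{schneider}, gives the corresponding one-sided $p$-adic BSD inequality
$$v_{p}\Bigl(\prod_{\wp|p}(1-\alpha_{\wp}^{-1})^{-2}L_{p,\alpha}^{*}(A,\one)\Bigr)\leq v_{p}(\mathrm{BSD}_{p}(A)).$$
Running the computation at the end of the proof of Theorem~\ref{theoremC}(3) — choose, via \cite{bfh, wald}, an auxiliary CM extension $E/F$ satisfying the hypotheses of Theorem~\ref{theoremB} together with $L(f_{\eps},1)\neq 0$, and use the rationality of the ratio $\mathrm{GZ}(f_{E})/\mathrm{BSD}(A)$ over $\mathrm{GZ}_{p}(f_{E})/\mathrm{BSD}_{p}(A)$ — transports this $p$-adic inequality to the claimed classical one.

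The main obstacle is bookkeeping: one must check that every auxiliary factor appearing in the comparison — the Euler factors $(1-\alpha_{\wp}^{-1})$ and $(1-\eps(\wp)\alpha_{\wp}^{-1})$ at primes above $p$, the local Tamagawa contributions at bad primes, torsion denominators, period ratios, and the normalising constants implicit in Hsieh's formulation of the main conjecture — has trivial $p$-adic valuation under the stated hypotheses (good ordinary reduction at each $\wp|p$ and $p\nmid 6h_{K'}^{-}D_{F}$), and that the direction of Hsieh's divisibility indeed matches the claimed inequality. A secondary technical point is to arrange the auxiliary~$E$ so that the Heegner condition $\eps_{E/F}(N)=(-1)^{g-1}$, the splitting of each $\wp|p$ in $E$, and the non-vanishing of $L(f_{\eps},1)$ hold simultaneously; for a CM elliptic curve, $E=K'$ or an appropriate quadratic twist is a natural candidate thanks to the split-principal hypothesis on $\wp$ and the CM structure.
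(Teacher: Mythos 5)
Your proposal is correct and is precisely the argument the paper intends: Theorem D is recorded as the specialisation of Theorem C(3) to the CM case, where the period conjecture (Blasius), the non-degeneracy of the cyclotomic $p$-adic height (Bertrand), and one divisibility in the Iwasawa main conjecture (Hsieh, whence the hypotheses $p\nmid 6h_{K'}^{-}D_{F}$ and $\wp$ principal) all become theorems, and the one-sided divisibility yields the one-sided valuation inequality in the direction you state. The bookkeeping and the choice of auxiliary $E$ that you flag are exactly the points the paper itself handles inside the proof of Theorem C(3) (via \cite{bfh}, \cite{wald} and the rationality of ${\rm GZ}/{\rm BSD}$ versus ${\rm GZ}_{p}/{\rm BSD}_{p}$), so nothing further is needed.
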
 

Results toward the divisibility  in the opposite direction can be obtained from the method of Kolyvagin, cf. \cite{kolystruct} (for $F=\Q$) and \cite{howgl2} (for general $F$ but excluding the CM case). 

\subsection*{Plan of the proof} The proof of the main formula follows the strategy of Perrin-Riou \cite{PR}. It is enough (see \S\ref{proofmt}) to study the case where $\W$ is cyclotomic ($\W=\W^{c}$), since both sides of the formula are zero when $\W$ is anticyclotomic ($\W\W^{c}=\one$).

 In the first part of this paper, we construct the  measure  $\Phi$ on  $\mathcal{G}$  valued in $p$-adic modular forms such that $L_{p}(f_E)(\W)$ essentially equals $\Lf(\Phi(\W))$, where  $\Lf$ is a $p$-adic analogue of the functional ``Petersson product with $f$'' on $p$-adic modular forms. This  allows us to write
 $$L_{p, \W}'(f_{E},\one)\doteq \Lf(\Phi'_{\W}),$$
 where $\doteq$ denotes equality uo to suitable nonzero factors, and $\Phi_{\W}'={d\over ds}\Phi(\W^{s})|_{s=0}$ is a $p$-adic Hilbert modular form.
 
On the other hand,  there is a modular form $\Psi$ with Fourier coefficients given by $\langle z, T(m)z\rangle_{\W}$, so that  $\Lf(\Psi)\doteq \langle z_f,z_f \rangle_{\W}$. It can be  essentially written as a sum of 
 modular forms $\Psi_\fin+\Psi_{ p}$, where $\Psi_\fin$  encodes the local contributions to the height from places  not dividing $p$ and $\Psi_{p}=\sum\Psi_{\wp}$ the contribution from the places $\wp$ above $p$. Then we can show by explicit computation  that the Fourier coefficients of $\Phi'$ are equal to the Fourier coefficients of $\Psi_\fin$ up to the action of suitable Hecke operators at $p$.  The desired formula then follows once we show that $\Lf(\Psi_{p})$ vanishes. To prove this we examine the effect of the operator $U_{\wp}$ on $\Psi_{\wp}$, and find that, in a suitable quotient space, the ordinary projection of $\Psi_{\wp}$ is zero. The study of $\Psi_{\wp}$ follows the methods of Perrin-Riou.

\medskip
One difficulty in the approach just outlined is that compared to the case of modular curves there are no cusps available, so that in this case the divisors $z$ and $T(m)z$ have intersecting supports and the decomposition of the height pairing into a sum of local pairings is not available. Our solution to this problem, which is inspired from the work of Zhang \cite{shouwu}, is to make use  of $p$-adic Arakelov theory as developed by Besser \cite{besser} (see \S \ref{sec:arakelov}) and work consistently in a suitable quotiented space of Fourier coefficients.

\subsection*{Perspective} The original Gross--Zagier formula has undergone an impressive transformation since its first appearance in 1986, culminating in the recent book of Yuan--Zhang--Zhang \cite{yzz}. Obviously, this work is only a first step in catching up on the $p$-adic side.\footnote{For a more accomplished attempt, see  \cite{pyzz}.} The latter has also seen important developments, with generalisations to the non-ordinary case, by Kobayashi \cite{kobayashi}, and to the case of  higher weights,  by   {Nekov{\'a}{\v{r}} \cite{nekovar} and Shnidman \cite{ari}.  It would certainly be of interest to generalise those results to the setting of the present work.\footnote{Results in the non-ordinary case were presented in a preliminary version of this paper, assuming a suitable construction of $p$-adic $L$-functions generalising \cite{nearly};  I hope to present them  in revised form in a future work.} 

Results similar to those presented here were recently obtained in the thesis of Li Ma (Paris 6).

\subsection*{Acknowledgments} The present paper grew out of  my Columbia thesis. I am grateful to my advisor Professor Shou-Wu Zhang for suggesting this area of research and for his support and encouragement; and to Amnon Besser, Shinichi 
Kobayashi, Luis Garcia Martinez, David Loeffler, Yifeng Liu, Giovanni Rosso,  Eric Urban, Jeanine Van Order and Shou-Wu Zhang for useful conversations or correspondence.

This work builds upon the works of Perrin-Riou \cite{PR} and Zhang \cite{shouwu, asian, II} -- and, of course,  Gross--Zagier \cite{GZ}. My debt to their ideas cannot be overstated and will be obvious to the  reader.

Some  revisions to the manuscript were done while the author was a postdoctoral fellow at MSRI funded under NSF grant 0932078000.

\subsection*{Notation}
Throughout this text we use the following notation and assumptions, unless otherwise noted:

\begin{itemize}
\item $F$ is a totally real field of degree $g$;
\item $\N_{F}$ is the monoid of nonzero ideals of $\OO_{F}$;
\item $|\cdot|_{v}$ is the standard absolute value on $F_{v}$;
\item $\A=\A_F$ is the ad\`{e}le ring of $F$;  if $*$  is a place or a set of places or an ideal of $F$,  the component at $*$ (respectively away from $*$) of an adelic object  $x$ is denoted $x_*$ (respectively $x^*$). For example if $\phi=\prod_{v}\phi_{v}$ is a Hecke character and $\delta$ is an ideal of $\OO_{F}$ we write $\phi_{\delta}(y)=\prod_{v\vert \delta}\phi_{v}(y_{v})$, and $|y|_{\delta}=\prod_{v\vert \delta}|y|_{v}$. We also use the notation 
$$|m|_{v}=|\pi_{m}|_{v},\ |m|_{\delta}=|\pi_{m}|_{\delta}, \quad \phi_{v}(m)=\phi_{v}(\pi_{m}),  \ \phi_{\delta}(m)=\phi_{\delta}(\pi_{m})$$
if $m$ is an ideal of $\OO_{F}$ and $\phi$ is unramified at $\delta$ (here $\pi_{m}$ satisfies $\pi_{m}\OO_{F}=m$).
\item ``$>$''  denotes the partial order on $\A_{F}$ given by $x>0$ if and only if $x_{\infty}$ is totally positive;
\item $R_\A=R\otimes_F\A$ if $R$ is an $F$-algebra;
\item $\N m$ is the absolute norm of an ideal $m$ in a number field (the index of $m$ in the ring of integers: it is a positive natural number); 
\item $d_F$ is the different of $F$;
\item $\pi_{N}$, for $N$ an ideal of $\OO_{F}$, is the id\`ele with components $\pi_{v}^{v(N)}$ for $v\nmid\infty$ and $1$ for $v\vert \infty$.
\item $D_F=\N d_F$ is the discriminant of $F$. 
\item  $m^\times=\{a\in F_\A^\times\, |\, a\OO_F=m\}$ if $m$ is any nonzero fractional ideal of $F$ (this notation will be used with $m=d_F^{-1}$).
\end{itemize}

\begin{itemize}
\item $E$ is a quadratic CM  (that is, totally imaginary) extension of $F$;
\item $\frakD=\frakD_{E/F}$ is the different of $E/F$.
\item $\frakN=N_{E/F}$ is the relaltive norm on $E$ or any $E$-algebra; 
\item $\Delta=\Delta_{E/F}=\frakN(\frakD)$ is the relative discriminant of $E/F$ and we assume $$(\Delta_{E/F}, D_{F}Np)=1;$$
in \S\S\ref{2.5}, \ref{4.5} and part of \S\ref{3.2}  we further assume that
$$(\Delta, 2)=1$$
and in \S\S\ \ref{7.2}, \ref{sec:hp2}, \ref{8.1}, that 
$$(\Delta, 2)=1 \textrm{ and all primes $\wp$ dividing $p$ are split in $E$.}$$
\item $D_{E}=\N(\Delta)$ is the absolute discriminant of $E$.
\end{itemize}

\begin{itemize}
\item $U_F(N)$ is the subgroup of $\widehat{\OO}_F^\times =\prod_v \OO_{ F,v}^\times\subset F_{\A^\infty}^\times$ consisiting of elements $x\equiv 1\mod N\widehat{\OO}_F$, if $N$ is any ideal of $\OO_F$; 
\item $\ex_v(x)=\exp(-2\pi i \{\Tr_{F_v/\Q_p}(x)\}_p)$ for $v\vert p<\infty$ and $\{y\}_p$ the $p$-fractional part of $y\in\Q_p$ is the standard addtive character of $F_v$, with conductor $d_{F,v}^{-1}$; for  $v\vert \infty$, $\ex_v(x)=\exp(2\pi i \Tr_{F_v/\R}(x))$;
\item $\ex(x)=\prod_v\ex_v(x_v)$ is the standard additive character of $\A_F$.
\end{itemize}

\begin{itemize}
\item $\one_Y$ is the characteristic function of the set $Y$;
\item if $\varphi$ is any logical proposition, we define $\one[\varphi]$ to be $1$ when $\varphi$ is true and $0$ when $\varphi$ is false -- e.g. $\one[x\in Y]=\one_Y(x)$.
\end{itemize}

\mainmatter

\part{$p$-adic $L$-function and measures}

This part is dedicated to the construction of the measure giving the $p$-adic Rankin--Selberg $L$-function $L_{p}(f_E)$ and to the computation of its Fourier coefficients.

\section{$p$-adic modular forms}

\subsection{Hilbert modular forms}\label{sec:hmf}
Let us define compact subgroups of $\GL_2(\A^\infty)$ as follows:
\begin{itemize}
\item $K_0(N)=\left\{\begin{pmatrix}a & b\\c&d\end{pmatrix}\in \GL_2(\widehat{\OO}_F)\ |\  c\equiv 0 \mod N\widehat{\OO}_F\right\}$ if $N$ is an ideal of $\OO_F$;
\item $K_1(N)=\left\{\begin{pmatrix}a & b\\c&d\end{pmatrix}\in K_0(N)\ |\ a\equiv 1 \mod N\widehat{\OO}_F \right\}$.
\end{itemize}
Let $k$ be an element of $\Z_{\geq 0}^{\Hom(F, \baar{\Q})}$ and $\psi$ be a character of $F_\A^\times/F^{\times}$ of conductor dividing $N$ satisfying $\psi_v(-1)=(-1)^{k_{v}}$ for $v\vert \infty$.  
A \textbf{Hilbert modular form} of  weight $k$, level $K_1(N)$ and character $\psi$ is a smooth function $$f\colon \GL_2(F)\bks \GL_2(\A_F)\to \C$$ of \emph{moderate growth}\footnote{That is, for every $g$ the function  $\A^{\times}\ni y\mapsto f\left(\twomat y{}{}1 g\right)$ grows at most polynomially in $|y|$ as $|y|\to\infty$.}  
satisfying\footnote{Recall the notation $\psi_{N}=\prod_{v\vert N}\psi_{v}$.} 
$$f\left(\begin{pmatrix}z&\\&z \end{pmatrix} g \begin{pmatrix}a&b\\c&d \end{pmatrix} r(\theta)\right)=\psi(z)\psi_{N}(a) \ex_\infty(k\cdot \theta)f(g)$$
for each $z\in F_{\A}^\times$, $\begin{pmatrix} a&b\\c&d\end{pmatrix}\in K_0(N)$ and $\theta=(\theta_v)_{v\vert \infty}\in F_\infty$,
with $r(\theta)=\prod_{v\vert \infty} r(\theta_v)$ and
$r(\theta_v)=\begin{pmatrix} \cos\theta_v&\sin\theta_v\\-\sin\theta_v&\cos\theta_v\end{pmatrix} \in \SO_2(F_v)$. If $k $ is constant we say that $f$ has parallel weight; in this work we will be almost exclusively concerned with forms of parallel weight, and we will assume that we are in this situation for the rest of this section.

We call $f$  holomorphic if for each $x^{\infty}\in \A^{\infty}$, $y^{\infty}\in F_{\A^{\infty}}^{\times}$, the function on  $\frakh^{\Hom(F,\baar{\Q})}=\{x_\infty+iy_{\infty}\in F\otimes\C\,\ |\, y_{\infty}>0\}$ 
$$x_{\infty}+iy_{\infty}\mapsto \psi^{-1}(y) |y|^{-k/2} f\left(\twomat y x {}1\right)$$
is holomorphic; in this case such  function determines $f$.

\subsubsection{Petersson inner product} We define a Haar measure $dg$ on $Z(\A_{F})\bks \GL_{2}(\A_{F})$ (where $Z\cong\mathbf{G}_{m}$ denotes the center of $\GL_{2}$) as follows. Recall the Iwasawa decomposition  
\begin{align}\label{iwadec}
\GL_2(\A_F)=B(\A_F)K_0(1)K_\infty
\end{align}
where $K_\infty=\prod_{v\vert \infty} \SO_2(F_v)$. Let $dk=\otimes_{v}dk_{v}$ be the Haar measure on $K=K_{0}(1)K_{\infty}$ with volume $1$ on each component. Let  $dx=\otimes_{v}dx_{v}$ be the Haar  measure such that $dx_{v}$ is the usual Lebesgue measure on $\R$ if $v\vert \infty$, and $\OO_{F,v}$ has volume $1$ if $v\nmid \infty$. Finally let $d^{\times}x=\otimes_{v}d^{\times}x_{v}$ on $F_{\A}^{\times}$ be the product of the measures given by $d^{\times}x_{v}=|dx_{v}/x_{v}|$ if $v\vert\infty$ and by the condition that $\OO_{F,v}^{\times}$ has volume $1$ if $v\vert\infty$. Then we can use the Iwasawa decomposition  $g=\twomat z{}{}z\twomat y x{}1 k$ to define
$$\int_{Z(\A)\bks \GL_{2}(\A)} f(g)\, dg=\int_{F_{\A}^{\times}}\int_{\A}\int_{K} f\left(\twomat y x{}1 k \right) \,dk\,dx\,{d^{\times}y\over |y|}.$$
The Petersson inner product of two forms $f_{1}$, $f_{2}$ on $GL_{2}(F)\bks GL_{2}(\A)$ such that $f_{1}f_{2}$ is invariant under $Z(\A)$ is defined by 
\begin{align*}
\langle f_{1}, f_{2}\rangle_{\rm Pet} =\int_{Z(\A)\bks \GL_{2}(\A)} \baar{f_{1}(g)}f_{2} (g)\, dg
\end{align*}
whenever this converges (this is ensured if either $f_{1}$ or $f_{2}$ is a cuspform as defined below). It will be convenient to introduce a level-specific inner product on forms $f$, $g$ of level $N$:
\begin{align}\label{pet}
\langle f, g\rangle_{N}=\frac{\langle f, g\rangle_{\rm Pet}}{\mu(N)}
\end{align}
where $\mu(N)$ is the measure of $K_{0}(N)$.

\subsection{Fourier expansion}\label{sec:fourier}

Let $f$ be a (not necessarily holomorphic) Hilbert modular form. We can expand it as
$$f(g)=C_{f}(g)+\sum_{\alpha\in F^{\times}} W_{f}\left(\twomat \alpha {}{}1 g\right)$$
where 
\begin{align*} C_{f}(g)&=D_{F}^{-1/2}\int_{\A/F} f\left(\twomat 1 x {} 1 g\right) \, dx,\\
 W_{f}(g)&=D_{F}^{-1/2}\int_{\A/F} f\left(\twomat 1 x {} 1 g\right)\ex(-x) \, dx
\end{align*}
are called the \emph{constant term} and the \emph{Whittaker function} of $f$ respectively. The form $f$ is called \emph{cuspidal} if its  constant term $C_{f}$ is identically zero. The functions of $y$ obtained by restricting the constant term and the Whittaker function to the elements  $\smalltwomat y {}{} 1$ are called the \emph{Whittaker coefficients} of $f$.  When $f$ is holomorphic, they  vanish unless $y_{\infty}>0$ and otherwise have the simple form
\begin{align*}
C_{f}\left(\twomat y{x}{}1\right)&=\wtil{a}^{0}(f,y)=\psi(y)|y|^{k/2}a(f,0),\\
W_{f}\left(\twomat y{x}{}1\right)&=\wtil{a}_{}(f,y)\ex_{\infty}(iy_{\infty})\ex(x)=\psi(y)|y|^{k/2}a(f,y^{\infty}d_{F})\ex_{\infty}(iy_{\infty})\ex(x)
\end{align*}
for functions $\wtil{a}^{0}(f,y)$, $\wtil{a}(f,y)$ of $y\in F_{\A}^{\infty,\times}$  which we call the \emph{Whittaker-Fourier coefficients} of~$f$, and a function $a(f,m)$ of  the fractional ideals $m$ of ${F}$ which vanishes on nonintegral ideals whose values are called the \textbf{Fourier coefficients} of~$f$.

For any $\Z$-submodule $A$ of $\C$,  we denote by ${M}_k(K_1(N),\psi,A)$ the space of holomorphic Hilbert modular forms with Fourier coefficients in $A$ of weight $k$,  level $K_1(N)$, and  character  $\psi$; and by $S_{k}(K_1(N),\psi,A)$ its subspace of cuspidal forms. 
 When the character $\psi$ is trivial we denote those spaces simply by $M_k(K_0(N), A)$ and $S_k(K_0(N),A)$, whereas linear combinations of forms of level $K_{1}(N)$ with different characters form the space $M_{k}(K_{1}(N), A)$. The notion of Whittaker-Fourier coefficients extends by linearity to the spaces $M_{k}(K_{1}(N),\C)$.

We can allow more general coefficients: if $A$ is a $\Z[1/N]$-algebra, we define $S_{k}(K_{0}(N),A)= S_{k}(K_{0}(N),\Z[1/N])\otimes A$; this is well-defined thanks to the $q$-expansion principle \cite{AG}.

\subsection{$p$-adic  modular forms}\label{p-over} Let $N$, $P$ be coprime ideals of $\OO_{F}$, $\psi$ a character of conductor dividing $N$. If $f$ is a holomorphic form  of weight $k$, level $K_{1}(NP)$ and prime-to-$P$ character $\psi$ (that is, $f$ is a linear combination of forms of level $NP$ and character $\psi\psi'$ with $\psi'$ a character of conductor dividing $P$), we associate to it the \emph{formal $q$-expansion coefficients}
$$ a_{p}(f,y^{\infty})=\psi^{-1}(y)|y|^{-k/2}\wtil{a}(f,y).$$
If $\psi'$ is trivial we set $a_{p}(f,m)=a_{p}(f,y^{\infty})$ if $m$ is the ideal $m=y^{\infty}d_{F}$.

Let $N$ be an ideal prime to $p$, $\psi$ a character of level dividing $N$. Consider the space of classical modular forms $M_k(K_1(Np^\infty),\baar{\Q})$  with character whose prime-to-$p$ part is equal  to $\psi$, and endow it with the norm given by the maximum of the $p$-adic absolute values (for the chosen embedding $\baar{\Q}\hookrightarrow \C_{p}$) of the Fourier coefficients. Its completion
 $$\mathbf{M}_k(K_1(N),\psi, \C_{p})$$  
 of this space is a $p$-adic Banach space called the space of  \textbf{$p$-adic modular forms} of weight $k$,  \emph{tame level} $K_1(N)$ and tame character $\psi$. We shall view $M_{k}(K_{1}(Np^{r},\psi\psi',\calA)$ (for any character $\psi'$ of conductor divisible only by primes above $p$)   as a subset of $\mathbf{M}_k(K_1(N),\psi,\calA)$ via the $q$-expansion map.
 
If  $\calA$ is  a complete $\Z_p$-submodule of $\C_p$, we also use the notation  $\mathbf{M}_k(K_1(N),\psi,\calA)$ with obvious meaning, and  $\mathbf{S}_k(K_1(N),\psi,\calA)$ or   $\mathbf{S}_k(K_0(N),\calA)$ (in the case of trivial tame character) for cuspforms; when $k=2$ we  write simply 
$$\mathbf{S}_N(\calA)=\mathbf{S}_2(K_0(N),\calA)$$
or just $\mathbf{S}_N$ if $\calA=\Q_{p}$ or $\calA=\C_{p}$ (as understood from context). 

\subsection{Operators acting on modular forms}\label{sec:hecke}
There is a natural action of the group algebra $\Q[\GL_{2}(\A^{\infty})]$  on modular forms induced by right translation. Here we describe several interesting operators arising from this action.

Let $m$ be an ideal of $\OO_{F}$, $\pi_{m}\in F_{\A^{\infty}}^{\times}$ a generator of $m\widehat{\OO}_{F}$ which is trivial at places not dividing $m$.

The operator $[m]\colon M_{k}(K_{1}(N),\psi)\to M_{k}(K_{1}(Nm),\psi)$ is defined by
\begin{align}\label{[d]}
[m]f(g)=\N (m)^{-k/2}  f\left(g\twomat {1}{}{}{\pi_{m}}\right).
\end{align}
It acts on Fourier coefficients by 
$$a([m]f, n)=a(f,m^{-1}n).$$
If $\chi$ is a Hecke character of $F$, we denote by $f|\chi$ the form with coefficients
$$a(f|\chi,n)=\chi(n)a(f,n).$$

For any double coset decomposition $$K_{1}(N)\twomat {\pi_{m}}{}{}1 K_{1}(N)=\coprod_{i} \gamma_{i} K_{1}(N),$$
the \textbf{Hecke operator} $T(m)$  is defined by the following level-preserving action on forms $f$ in $M_{k}(K_{1}(N))$:
$$T(m)f(g)=\N(m)^{k/2-1}\sum_{i}f(g\gamma_{i});$$
For $m$ prime to $N$, its effect on Fourier coefficients of forms with trivial character is described by
$$a(T(m)f, n)=\sum_{d|(m,n)}\N(d)^{k/2-1}a(f, mn/d^{2}).$$

When $m$ divides $N$, we can pick as double coset representatives the matrices $\gamma_{i}=\twomat{\pi_{m}} {c_{i}}{}{1}$ for $\{c_{i}\}\subset\widehat{\OO}_{F}$ a set of representatives for $\OO_{F}/m\OO_{F}$. Then the operator $T(m)$ is more commonly denoted $U(m)$ and we will usually follow this practice. It acts on Fourier coefficients of forms with trivial character by
$$a(U(m)f,n)=\N(m)^{k/2-1} a(f,mn).$$

Let $\mathbf{T}_{N}$ be the (commutative) subring of $\End S_{2}(K_{0}(N),\Z)$ generated by the $T(m)$ for $m$ prime to $N$. 
A form $f$ which is an eigenfunction of all the operators in $\mathbf{T}_{N}$ is called a Hecke \emph{eigenform}.  It is called a \emph{primitive} form if moreover  it is normalised by $a(f,1)=1$ and it is a newform (see \S \ref{sec:calS} below for the definition) of some level dividing $N$.

As usual (cf. \cite[Lemme 1.10]{PR}) we will need the following well-known lemma to ensure the modularity of certain generating functions.

\begin{lemm}\label{ismodular} Let $A$ be a $\Q$-algebra. For each linear form $$a\colon {\mathbf{T}}_N \to A$$  there is a unique   modular form in  $\oplus_{N'\vert N}S_{k}^{\rm new}(K_{0}(N'),A)$ whose Fourier coefficients are given by $a(T(m))$ for all $m$ prime to~$N$. 
\end{lemm}
\begin{proof} In  \cite[Corollary 3.18]{shouwu}, the result is stated and proved when $A=\C$ as a consequence  of the existence of a pairing $(T,f)\mapsto a_{1}(Tf)$ between ${\bf T}_{N}$ and the space of modular forms of interest; but this pairing is defined over $\Q$, hence the result is true for $A=\Q$ and by extending scalars for any $\Q$-algebra $A$. 
\end{proof}

\subsubsection{Atkin-Lehner theory} For any nonzero ideal $M$ of $\OO_F$, let $W_M\in \GL_2(\A^\infty)$ be a matrix with components
\begin{align}\label{WM}
W_{M,v}= \twomat {}{1}{-\pi_v^{v(M)}}{} \quad \textrm{if $v\vert M$,}\qquad W_{M,v}= \twomat 1 {}{}1 \quad\textrm{if  $v\nmid M$}
\end{align}
where $\pi_v$ is a uniformiser at $v$. 
We denote by the same name $W_M$ the operator acting on modular forms of level $N$ and trivial character by 
$$W_{M}f(g)=f(gW_{M});$$
it is self-adjoint for the Petersson inner product, and when $M$ is prime to $N$ it is proportional to the operator $[M]$ of \eqref{[d]}. On the other hand when $M$ equals $N$, or more generally $M$ divides $N$ and is coprime to $NM^{-1}$, the operator  $W_M$ is  an involution and its action is particularly interesting. In this case, extending the definition to forms of level $K_{1}(N)$ and character\footnote{Notice that a decomposition of $\psi$ as described is only unique up to class group characters (that is, Hecke characters of level one). We will only be using the operator $W_{M}$ for $M$ a proper divisor of $N$ in a case in which a decomposition is naturally given.} $\psi=\psi_{(M)}\psi_{(NM^{-1})}$ with $\psi_{(C)}$ of conductor dividing $C$, we have 
\begin{gather}\label{WMop}
W_M f(g)=\psi_{(M)}^{-1}(\det g) \psi_{(M)}^{-1}(\pi_{M}) f(gW_M)
\end{gather} 
where $\pi_{M}$ is the id\'ele with nontrivial components only at $v|M$ and given there by $\pi_{v}^{v(M)}$. It is easy to check that this definition is independent of the choice of uniformisers. 
The effect of the $W_{M}$-action on newforms is described  by Atkin--Lehner theory; we summarise it here (in the case $M=N$), referring to \cite{casselman} for the details.

Let $\pi$ be an irreducbile infinite-dimensional automorphic representation of $\GL_2(\A_F)$ of central character  $\psi$. Up to scaling, there is a unique \emph{newform} $f$ in the space of $\pi$. It is characterised by either of the equivalent properties: (a) it is fixed by a subgroup $K_{1}(N)$ with $N$ minimal among the $N'$ for which $\pi^{K_{1}(N')}\neq0$; (b) its Mellin transform is (a multiple of) the $L$-function $L(\pi,s)$ of $\pi$. In the case of a holomorphic cuspform, this is equivalent to requiring that it belongs to the space of newforms defined in  \S\ref{sec:calS} below. There is a functional equation relating the $L$-function  $L(s,\pi)$  of $\pi$  and  the $L$-function $L(1-s,\wtil{\pi})$ of the contragredient representation; as $\wtil{\pi}\cong\psi^{-1}\cdot \baar{\pi}$, it translates into the following description of the action of $W_N$ on newforms. Suppose that the eigenform $f\in S_k(K_1(N),\psi)$  is a newform in the representation $\pi$ it generates, then we have
\begin{gather}\label{atkin-lehner}
W_N f(g)= (-i)^{[F:\Q]k}\tau(f) f^\rho(g)
\end{gather}
where $f^{\rho}$ is the form with coefficients
\begin{align}\label{frho}
a(f^{\rho},m)=\baar{a(f,m)}
\end{align}
and  $\tau(f)=\tau(\pi)$ is an algebraic number of complex absolute value one.

\subsubsection{Trace of a modular form}
The \textbf{trace} of  a modular form $f$ of level $ND$ and trivial character is the form of level $N$
$$\mathrm{Tr}_{ND/N}(f)(g)=\sum_{\gamma\in K_0(N)/K_0(ND)} f(g\gamma).$$
It is the adjoint of inclusion of forms of level $N$ for the rescaled Petersson product:
\begin{align*}
\langle f, {\rm Tr}_{ND/N} g\rangle_{N}=\langle f, g\rangle_{ND}
\end{align*}
if $f$ has level $N$ and $g$ has level $D$.

Suppose  that $D$ is squarefree  
and prime to $N$, in which case we can write ${\rm Tr}_{D}={\rm Tr}_{ND/N}$ without risk of ambiguity.  A set of coset representatives for $K_0(N)/K_0(ND)$ is given by elements $\gamma_{j,\delta}$ for $\delta|D$, $ j\in \OO_{F,v}/\delta\OO_{F,v}$,
having components 
$$\gamma_{j,\delta,v}=\begin{pmatrix}1 & j\\ & 1 \end{pmatrix}\begin{pmatrix} & 1\\-1 &\end{pmatrix}={1\over \pi_v} \begin{pmatrix}\pi_v & j\\ & 1 \end{pmatrix}\begin{pmatrix} & 1\\-\pi_v &\end{pmatrix}$$ 
at places $v | \delta$, and $\gamma_{j,\delta,v}=1$ everywhere else.
From the second decomposition given just above, if $f$ has weight~$2$ we obtain 
\begin{align}\label{fouriertrace}
a(\mathrm{Tr}_{D}(f),m)=\sum_{\delta|D}a(U(\delta)f^{(\delta)},m)=\sum_{\delta|D}a(f^{\delta)}, m\delta)
\end{align}
where $f^{(\delta)}(g)=f(g W_\delta)$ with $W_\delta$ as in \eqref{WM}.

\begin{rema}  If $D$ is prime to $p$, the various trace operators $\mathrm{Tr}_{NDp^r/Np^r}$ extend to a continuous operator $\mathrm{Tr}_{ND/N}$ on $p$-adic modular forms of  tame level $ND$.  
Similarly the operators $[m]$, $T(m)$  and $W_{m}$ for $m$ prime to $Np$ extend to continuous operators on $p$-adic  modular forms of tame level $N$.
\end{rema}

\subsubsection{Ordinary projector} Let $L$ be a complete subfield of $\C_{p}$. Following Hida (see e.g. \cite[\S 3]{Hi}) we can define  for each $\wp\vert p$ an indempotent 
$$e_{\wp}=\lim_{n\to \infty} U_{\wp}^{n!}\colon {\bf S}_{N}(L)\to S_{N\wp}(L)$$
which is surjective onto $S_{N\wp}^{\wp{\rm -ord}}(L)$, the subspace of $S_{N\wp}(L)$ spanned by $U_{\wp}$-eigenforms with unit eigenvalue. 

 Let $P=\prod_{\wp\vert p}\wp$. Then we similarly  have a surjective idempotent
 $$e=\prod_{\wp \vert p} e_{\wp}\colon  {\bf S}_{N}(L)\to S_{NP}^{\rm ord}(L),$$
where $S_{NP}^{\rm ord}(L)$ is the subspace of $S_{NP}(L)$ spanned by  simultaneous $U_{\wp}$-eigenforms with unit eigenvalue.

\subsection{Fourier coefficients of old forms}\label{sec:calS} As we will study modular forms through their Fourier coefficients, we give here  a criterion for recognising the coefficients of certain old forms.\footnote{Cf. \cite[\S4.4.4]{shouwu}.} Let $N$, $P$ be coprime ideals of $\OO_F$. The space $S_{NP}^\textrm{$N$-old}\subset S_{NP}$ is the space spanned by forms $f=[d]f'$ for  some $1\neq d\vert N$  and some cuspform $f'$ of level $N'P$ with $N'\vert d^{-1}N$. In the case $P=1$, we define the space of \textbf{newforms} of level dividing $N$ to be the orthogonal to the space of old forms for the Petersson inner product. 
We denote by $\mathbf{S}^\mathrm{old}_N\subset \mathbf{S}_N$ the closed subspace generated by the image of $S_{Np^\infty}^{\textrm{$N$-old}}$ in~$\mathbf{S}_N$.
(The coefficient ring will always be either a finite extension of $\Q_{p}$ or $\C_{p}$, as understood from context when not present explicitly in the notation).

\medskip

Let now $\calS$ be the space of functions $f\colon \N_F\to \calA$ modulo those for which there is an ideal $M$ prime to $p$ 
such that $f(n)=0$ for all $n$ prime to $M$.  A function $f\in\calS$ is called \emph{multiplicative} if it satisfies\footnote{This relation and the following are of course to be understood to hold in $\calS$.} $f(mn)=f(m)f(n)$ for all $(m,n)=1$. For $h$ a multiplicative function, a function $f$ is called an \emph{$h$-derivative} if it satisfies $f(mn)=h(m)f(n)+h(n)f(m)$ for all $(m,n)=1$.

Let  $\sigma_1$ and $r$ be the multiplicative elements of $\calS$ defined by 
$$\sigma_1(m)=\sum_{d\vert m}\N (d), \qquad r(m)=\sum_{d\vert m} \eps_{E/F}(d)$$
(where $E$ is a totally imaginary quadratic extension of $F$ of discriminant prime to $p$).\footnote{We will see below that $\sigma_1$ and $r$ are the Fourier coefficients of weight $1$ Eisenstein series and theta series.} Let $P=\prod_{\wp\vert p }\wp \subset \OO_{F}$. We define a subspace $\mathcal{D}_N\subset \calS$ to be generated by $\sigma_1$, $r$, $\sigma_1$-derivatives, $r$-derivatives, and Fourier coefficients of forms in ${S}^{N\rm{-old}}_{NP}$.

\begin{lemm}\label{inj} The $q$-expansion map $S_{NP}^{\rm ord}/S_{NP}^{N-{\rm old}}\to \calS/\mathcal{D}_{N}$ is injective.
\end{lemm}
\begin{proof}
First notice that it is enough to show this when the coefficient ring is a number field $L$ over which $S_{NP}^{\rm ord}$ is defined (it suffices for $L$ to contain all the eigenvalues of  the operators $T_{\ell}$   ($\ell\nmid Np$) and $U_{\wp}$ on  $S_{NP}(L)$).
By \cite[Proposition 4.5.1]{shouwu}, the kernel of $S_{NP}(L)/S_{NP}^{N{\rm -old}}(L)\to \calS/\mathcal{D}_{N}$ is at most generated by $S_{NP}^{p{\rm-old}}(L)=\sum_{\wp\vert p} S_{NP}^{\wp{\rm-old}}(L) $, the space of forms which are old at some $\wp\vert p$. To conclude, it suffices to show that for each $\wp\vert p$ we have $I:=S_{NP}^{\wp{\rm-old}}\cap S_{NP}^{\wp{\rm -ord}}=0$. The intersection $I$ is stable under the action of ${\bf T}_{NP}$ which decomposes it into spaces $I[f_{i}]\subset S_{NP}[f_{i}]$ corresponding to eigenforms $f_{i}$ of level $N'$ or $N'\wp$ for some $N'\vert NP\wp^{-1}$. If $f_{i}$ has level $N'\wp$ then $S_{NP}[f_{i}]$ does not contain any nonzero $\wp$-oldforms. If $f_{i}$ has level $N'$ with $\wp\nmid N'$ then $S_{NP}^{\ord}[f_{i}]$ is either zero or the line spanned by the ordinary $\wp$-stabilisation of $f_{i}$, whereas $S_{NP}^{\wp{\rm -old}}[f_{i}]$ is the line spanned by $[\wp]f_{i}$. We conclude that $I[f_{i}]=0$ in all cases.
\end{proof}

\begin{rema} The  operators $U_\wp$  for $\wp\vert p$ extend to operators on $\calS$ via $U_\wp f(m)=f(m\wp)$. The Hecke algebra ${{\bf T}}_{Np}$ acts on the image $\calS_{N}$ of ${\bf S}_{N}$ in $\calS/{\mathcal D}_{N}$.
\end{rema}

\subsection{The functional $\Lf$}\label{sec:Lf}
Recall from the Introduction that we have fixed an ordinary primitive Hilbert modular newform $f$ of level $K_0(N)$.
If $\alpha_{\wp}$ is the unit root of the $\wp^{\mathrm{th}}$ Hecke polynomial of $f$, $\beta_{\wp}$ is the other root,  and the operator $[\wp]$ is as in \eqref{[d]}, then the \emph{$p$-stabilisation} of $f$ is 
$$f_\alpha=\prod_{\wp\vert p} \left(1-{\beta_{\wp}}[\wp]\right)f,$$
a form of level $K_{0}(N\prod_{\wp\vert p}\wp)$ satisfying $U_{\wp}f_{\alpha}=\alpha_{\wp}f_{\alpha}$ for all $\wp\vert p$.

We define a functional, first introduced by Hida, which plays the role of projection onto the $f$-component. Both sides of our main formula will be images of $p$-adic modular forms under this operator. 

Let $P$ be an ideal of $\OO_{F}$ divisible exactly by the primes $\wp\vert p$. For a form  $g\in M_2(K_0(NP))$ with $r\geq 1$, let
$$\Lf(g)= \frac{\langle W_{NP} f_{\alpha}^\rho,\, g\rangle }{\langle W_{NP} f_{\alpha}^\rho,\, f_{\alpha} \rangle}.$$
Let $L\subset\baar{\Q}_{p}$ be the   extension of $\Q_{p}$ generated by  $a(f, m)$  for all ideals $m$ and  $\alpha_{\wp}$ for $\wp\vert p$.
\begin{lemm}[Hida]\label{Lf} The above formula defines  a  linear functional  
$$\Lf\colon  M_2(K_0(Np^\infty),L)\to L$$
 satisfying:
\begin{enumerate}
\item\label{firstitem} On $M_2(K_0(N), L)$ we have
$$\Lf=\prod_{\wp\vert p} \left(1-{\N\wp\over\alpha_\wp^2}\right)^{-1}\one_f$$
where $\one_f(g)=\langle f,g\rangle/\langle f,f\rangle$.
\item\label{killcrit} On $M_2(K_{0}(N\wp^r))$ we have, for each nonnegative $t\leq r-1$,
 $$\Lf\circ U_\wp^t=\alpha_\wp(f)^t\Lf.$$
\item\label{secondlastitem} If each $\iota_{p}(\alpha_{\wp})$ is a $p$-adic unit, $\Lf$ admits a continuous extension to $p$-adic modular forms still denoted  
$$\Lf\colon  \mathbf{M}_N(L)\to L.$$\end{enumerate}
\end{lemm}
\begin{proof} See \cite[Lemma 9.3]{Hi} where the well-definedness of the functional and its extension to $p$-adic modular forms are proved more generally for Hida families.  For part 1 the computation is the same as in the case  of  elliptic modular forms: see ~\cite{PRlondon} 
or \cite[\S4]{hidaI}. 
\end{proof}

\subsubsection{Some quotient spaces}
Let $\bcalS=\calS/\mathcal{D}_{N}$. The ordinary projection operator $e$ is not defined on all   arithmetic functions; however its kernel $\Ker(e)$ is a well-defined subspace of $\calS$. We define $$\bcalS^{\rm ord}:= \calS/\mathcal{D}_{N}+\Ker({e}).$$
 The quotient map $\bcalS\to \bcalS^{\rm ord}$ is clearly injective when restricted to the image of $S_{NP}^{\ord}$, where $P=\prod_{\wp\vert p}\wp$. Then we denote by $\bcalS_{N}^{\rm ord}$ the image of $S_{NP}^{\rm ord}$ in either $\bcalS$ or $\bcalS^{\rm ord}$.  It is also identified with the common image of $S_{NP}$  and ${\bf S}_{N}$ in $\bcalS^{\rm ord}$.
We denote by $\bcalS_{N}^{p{\rm -adic}}\subset \bcalS$ the image of ${\bf S}_{N}$. 

We obtain a commutative  diagram (where $L$ is as usual any sufficiently large finite extension of $\Q_{p}$):
\begin{equation}\label{quotient}
\xymatrix{
{\bf S}_{N}(L) \ar[r]\ar[d]^{e} & \bcalS_{N}^{p{\rm -adic}}(L) \ar[d] \ar@{^{(}->}[r] &  \bcalS^{\rm ord} \\
S_{NP}^{\rm ord}(L)/S_{NP}^{N{\rm -old}} \ar[r]^{\wtil{}} & \bcalS_{N}^{\rm ord}(L) \ar[r]^{\Lf} & L
}
\end{equation}
where the right-hand vertical map is the restriction of  the quotient  $\bcalS\to \bcalS^{\rm ord}$, and the bottom horizontal map is an isomorphism by Lemma  \ref{inj}.

\section{Theta measure}
We construct a measure on the Galois group of the maximal abelian extension of $E$ unramified outside $p$ with values in $p$-adic theta series, and compute its Fourier expansion.
\subsection{Weil representation} 
We first define the Weil representation. See \cite[\S4.8]{bump} for an introduction, and  \cite{wald} or \cite{yzz} for  our conventions on  the representation for  similitude groups. 
\subsubsection{Local setting}
Let $V=(V,q)$ be a quadratic space over a local field $F$ of characteristic not 2, with a quadratic form $q$; we choose a nontrivial additive character $\e$ of $F$. For simplicity we assume $V$ has even dimension. For $u\in F^\times$, we denote by $V_u$ the quadratic space $(V,uq)$. We let $\GL_2(F)\times \GO(V)$ act on the space $\mathcal{S}(V\times F^\times)$ of Schwartz functions as follows (here $\nu\colon \GO(V)\rightarrow\mathbf{G}_m$ denotes the similitude character):
\begin{itemize}
\item $r(h)\phi(t,u)=\phi(h^{-1}t,\nu(h)u)$ \quad for $h\in \GO(V)$;
\item$r(n(x))\phi(t,u)=\e(xuq(t))\phi(t,u)$ \quad for $n(x)=\begin{pmatrix} 1& x\\&1\end{pmatrix}\in \GL_2$;
\item $r\left(\begin{pmatrix}a&\\& d\end{pmatrix}\right)\phi(t,u)=\chi_{V}(a)|{a\over d}|^{\dim V\over 4}\phi(at, d^{-1}a^{-1}u)$;
\item $r(w)\phi(x,u)=\gamma(V_u)\hat{\phi}(x,u)$ for $w=\begin{pmatrix}&1\\-1 &\end{pmatrix}.$
\end{itemize}
Here $\chi_V$ is the quadratic character associated with $V$, $\gamma(V_u)$ is a certain square root of $\chi(-1)$, 
and $\hat{\phi}$ denotes the Fourier transform in the first variable
$$\hat{\phi}(x,u)=\int_{V} \phi(y,u)\e(-u\langle x,y\rangle)dy$$ 
where $\langle\ , \ \rangle$ is the bilinear form associated with $q$ and $dy$ is the self-dual Haar measure.

\subsubsection{Global setting} Given  a quadratic space $(V,q)$ over a global field $F$ of characteristic not 2 (and a nontrivial additive character $\e\colon F \backslash \A_F \rightarrow \C^\times$), the Weil representation is the restricted tensor product  $r$ of the associated local Weil representations, with  spherical functions $\phi_v(t,u)=\mathbf{1}_{\mathcal{V}_v\times \OO_{F,v}^\times}(x,u)$ for some choice of lattices $\mathcal{V}_v\subset V(F_v)$.

\medskip

The  case of interest to us is the following: $F$ is a totally real number field, $V=(E,\frakN)$ is given by a quadratic CM extension $E/F$ with the norm form $\frakN=N_{E/F}$ and the lattices $\OO_{E,v}\subset E_{v}$, and the additive character $\e$ is the standard one. We denote $G=\GL_2$, $H=\GO(V)$, two algebraic groups defined over $F$; we have $H\cong\Res_{E/F}\mathbf{G}_m$. In this case we have $$\chi_{V}=\eps_{E/F}=\eps,$$
where $\eps_{E/F}$ is the quadratic character of $F_\A^\times$ associated with the extension $E/F$. The self-dual measure on $E_{v}$ is the one giving $\OO_{E,v}$ volume $|\OO_{E,v}/{\frak D}_{v}|^{-1/2}$ where ${\frak D}_{v}$ is the relative  different  Moreover
 the constant  $\gamma$  can be explicitly described (see \cite[\S\S\ 38.6, 30.4,  23.5]{bh}): in the case $v\vert \Delta_{E/F}$, which is the only one we will be using, such description is in  terms of a local Gau{\ss} sum~$\kappa(v)$:
\begin{gather}\label{gamma}
\gamma(E_v, u\frakN)=\eps_v(u)\kappa(v)=\eps_v(u)|\pi_v\vert ^{1\over 2}\sum_{x\in(\OO_{F,v}/\pi_v\OO_{F,v})^\times} \eps(x/\pi_v)\ex_v(x/\pi_v).
\end{gather}
Notice that our $\kappa(v)$ is the \emph{inverse} of the quantity denoted by the same name in \cite[Proposition 3.5.2]{shouwu}.

\subsection{Theta series}\label{sec:thetaseries}

We define the \textbf{theta kernel} to be
$$\theta_\phi(g,h)=\sum_{(t,u)\in V\times F^\times} r(g,h)\phi(t,u) $$
which is an automorphic form for the group $\GL_2(F)\backslash \GL_2(\A_F)\times \GO(V)\backslash \GO(V_{\A_F})$.

If $\W$ is an automorphic function for $H$ which is trivial at infinity (which is the same thing as a linear combination of finite order Hecke characters of $E$), we define the \textbf{theta series}\footnote{The reason for taking $\W(h^{-1})$ rather than $\W(h)$ is that we want $\theta_{\phi}(\W)$ to be the series classically denoted $\Theta(\W)$ for a suitable choice of $\phi$ -- this will be clear from the computations below.}
$$\theta_\phi(\W)(g)=\int_{H(F)\backslash H(\A_F)} \W(h^{-1})\theta_\phi(g,h)\,dh$$
which is an automorphic form on $G$. Here the measure $dh$ is the product of the measure on $H(\A^\infty)$ which gives volume 1 to the compact $U_0=\widehat{\OO}_E^\times$, and any fixed measure\footnote{There will be no ambiguity since later we will choose $\phi_\infty$ to be again any fixed Schwartz function, whose integral over $H(\A_\infty)$ with respect to the chosen measure is a specified function $\baar{\phi}_\infty$.}  on $H(\A_\infty)$.

Let us explain how to explicitly compute the integral in our situation. For each open compact subgroup $U\subset H(\A^\infty_F)=E_{\A^\infty}$, we have  exact sequences
$$1\to \OO_{E,U}^\times \backslash U E_\infty^\times \to E^\times\backslash E_\A^\times \to E^\times U\backslash E_{\A^\infty}^\times\to 1$$
and
$$1\to \mu(U)\backslash U E_\infty^1 \to \OO_{E,U}^\times \backslash U E_\infty^\times \stackrel{\frakN_\infty}{\to} N(\OO_{E,U}^\times)\backslash F_\infty^+\to 1.$$
The notation used is the following: $\OO_{E,U}^\times=E^\times\cap U \supset \mu(U)=$ the subset of roots of unity, $\frakN_\infty\colon E_\infty^\times \to F_\infty^+$ is the norm map at the infinite places and $E_\infty^1$ is its kernel.

We can choose a splitting $\iota$ of the first sequence, for example $$\iota\colon E^\times U\backslash E_{\A^\infty}^\times\cong E^\times U \backslash (E_\A^\times)^{1,\parallel}\hookrightarrow  E^\times\backslash E_\A^\times,$$
where $(E_\A^\times)^{1,\parallel}$ denotes the set of id\`{e}les  of adelic norm 1 with infinity component $h_\infty=(h,\ldots,h)$ for some real number $h>0$ and the isomorphism is the unique one which gives the identity once composed with projection onto the finite part. 

We begin to expand the series, evaluating the integral as explained above and exploiting the fact that the action of $H(F_\infty)=E_\infty^\times$ on  $\phi(t,u)$ factors through the norm. We take $U$ to be small enough so that $\W$ and $\phi$ are invariant under  $U$, and denote
$$\baar{\phi}_v(t,u)=\int_{H(F_v)} r(h)\phi_v(t,u)\,dh  \quad\quad\textrm{ if } v\vert \infty$$
and $\baar{\phi}=\prod_{v\nmid \infty}\phi_v\prod_{v\vert \infty}\baar{\phi}_v$. A specific choice of $\baar{\phi}_v$ will be made shortly: for the moment we just record, and use in the following computation, that we will take $u\mapsto \baar{\phi}_v(t,u)$ to be supported on $\R^+$.

We have
\begin{gather*} 
\theta_\phi (\W)(g) =
\int_{E^\times\backslash E_\A^\times} \W(h^{-1})\theta_\phi(g,h)\,dh\\
=w_U^{-1}\int_U\int_{E_\infty^1} \int_{\frakN(\OO_{E,U}^\times)\bks F^+_\infty} \int_{E^\times U\bks E_{\A^\infty}^\times} \W(\iota(a)^{-1}) \sum_{(t,u)\in E\times F^\times} r(g,\iota(a)h)\phi(t,u)  \,da \, dh
\end{gather*}
Here  $w_U=|\mu(U)|$ and  $dh$ denotes  the measure on $U\times E_\infty^1\times F^+_\infty=U\times H(F_\infty)$. We partially collapse the integral over $\frakN(\OO_{E,U}^\times)\bks F^+_\infty$ and the sum over $u\in F^\times$  and use our choice of $\phi_\infty$ to get
\begin{equation}\label{eqn:expand}
\begin{split}
&=w_U^{-1}\vol(U)\int_{E^\times U\bks E_{\A^\infty}^\times} \W(\iota(a)^{-1})\sum_{u\in \frakN(\OO_{E,U}^\times)\bks F^+} \sum_{t\in E} r(g,\iota(a))\overline{\phi}(t,u) \,da \\
&= w^{-1}{h\over h_U} \int_{E^\times U\bks E_{\A^\infty}^\times} \W(\iota(a)^{-1})\, \nu_U\sum_{u\in \frakN(\OO_{E,U}^\times)\bks F^+} \sum_{t\in E} r(g,\iota(a))\overline{\phi}(t,u) \,da 
\end{split}
\end{equation}

Here in the last step we have defined  $\nu_U= [\frakN(\OO_E^\times): \frakN(\OO_{E,U}^\times)]$ and computed $\vol(U)=\vol(U_0)(h/h_U) (w_U/w)\nu_U^{-1}$, where $U_0=\widehat{\OO}_E^\times$,  $h_U=|E^\times U\bks E_{A^\infty}^\times|$, $h=h_{U_0}$, $w=w_{U_0}$. Recall that our measure satisfies $\vol(U_0) =1$.  
The remaining integral is just a finite sum.

The sum over $u$ is actually finite owing to the integrality constraints imposed by $\phi$ at finite places.\footnote{We will see this in more detail shortly. We are also using the definition of $\overline{\phi}_\infty$ in order to freely replace the sum over $u\in F^\times$ with a sum over $u\in F^+$ -- in fact a slight variation would be necessary when $\det g_\infty\notin F_\infty^+$, but this is a situation we won't encounter.}

\subsection{Theta measure}\label{measure}\label{sec:calG}

We define a measure with values in $p$-adic modular forms on the group 
$$\calG'=\Gal(E_{\infty}'/E)\cong \varprojlim \baar{E^\times U_{p^n}}\bks E_{\A^\infty}^\times$$
where the over line denotes closure and $E'_{\infty}$ is the maximal abelian extension of $E$ unramified outside $p$, that is, the union of the ray class fields of $E$ of $p$-power ray $U_{p^n}=\prod_v\{\textrm{units }\equiv 1 \mod  p^n\OO_{E,v}\}$ and the isomorphism is given by class field theory. The topology is the profinite topology.

Recall that a \textbf{measure} on a topological space $\calG$ with values in a $p$-adic Banach space  $\mathbf{M}$ is a ${\C}_{p}$-linear functional
$$\mu:\mathcal{C}(\calG, \C_{p})\to \mathbf{M}$$
on continuous $\C_{p}$-valued functions, which is continuous (equivalently, bounded) with respect to the sup norm on $\mathcal{C}(\calG,\C_{p})$. The linearity property will be called distributional property in what follows. The boundedness property  will in each case at hand be verified on the set of  $p$-adic characters of $\calG$, which in our cases generates the whole of $\mathcal{C}(\calG, \C_{p})$  (classically, the continuity  of $\mu$ goes under the name of  \emph{abstract Kummer congruences} for $\mu$).

When ${\bf M}={\bf M}_{0}\otimes_{\Q_{p}}\C_{p}$ for a $p$-adic Banach space ${\bf M}_{0}$ over $\Q_{p}$, the measure $\mu$ is said to be \emph{defined over $\Q_{p}$} if $\mu(\W)\in {\bf M}_{0}\otimes \Q_{p}(\W)$ whenever the function  $\W$ on $\calG$ has values in $\Q_{p}(\W)\subset \baar{\Q}_{p}\subset\C_{p}$.

\begin{defi}\label{deftheta} The \textbf{theta measure} $d\Theta$ on $\calG'$ is defined by 
$$\Theta(\W)=\int_{\calG'} \W(\sigma)\,d\Theta(\sigma):=\theta_\phi(\W),$$
for any function $\W\colon \calG'\rightarrow \baar{\Q}$ factoring through a finite quotient of $\calG'$,
where the function $\phi$ is chosen as follows:
\begin{itemize}
\item for $v\nmid  p\infty$, $\phi_v(t,u)=\mathbf{1}_{\OO_{E,v}}(t)\mathbf{1}_{d_{F_v}^{-1,\times}}(u)$;  
\item for $v\vert p$, $$\phi_v(t,u)= [\OO_{E,v}^\times:U_v']\mathbf{1}_{U'_v}(t)\mathbf{1}_{d_{F_v}^{-1,\times}}(u),$$ where $U_v'\subset \OO_{E,v}^\times$ is any small enough compact set -- that is,  $U_v'\subset U_v$ if $\W$ is invariant under $U=\prod_v U_v$, and the definition does not depend on the choice of $U_v$. (In practice, we will choose $U_v'=U_v$ if $U_v$ is maximal with respect to the property just mentioned.)
\item for $v\vert  \infty$, $\phi_v(t,u)$ is a  Schwartz function such that
$$\int_{H(F_v)} r(h)\phi_v(t,u)\,dh=\overline{\phi}_v(t,u)= 1_{\R^+}(u)\exp(-2\pi u N(t)).$$
(See \cite[4.1]{yzz} for more details on this choice.)
\end{itemize}
\end{defi}

In Corollary \ref{theo:thetameasure} below we will show that  this in fact defines a measure on $\calG'$ with values in $p$-adic Hilbert modular forms of weight one, tame level $\Delta_{E/F}$ and character $\eps$.

\subsection{Fourier expansion of the theta measure / I}\label{sec:expandtheta}

We compute the Fourier expansion of the theta measure on $\calG'$, carrying on the calculation started in \S\ref{sec:thetaseries}. 

In  the case where $g=\begin{pmatrix}y &x\\& 1\end{pmatrix}$ with $y_\infty>0$, the sum over $(u,t)$ in \eqref{eqn:expand} evaluates to 
\begin{gather}\label{eqn:expand2}
\eps(y) |y|^{1/2}  \sum_{u,t} \phi^\infty(a^{-1}yt,\frakN(a)y^{-1}u) \ex_\infty(iy_\infty uN(t)) \ex(x u \frakN(t)).
\end{gather}
Then we compute the sum of this expression over the finite quotient   $\calG'_U$ of $\calG'$, with $\calG'_U\cong E^\times U\bks E_{\A^\infty}^\times$. 

We assume $\W$ is a character so $\W(a^{-1})=\baar{\W}(a)$ where $\baar{\W}=\W^{-1}$.

First we pre-compute the product of all the constants appearing in the theta series of \eqref{eqn:expand}, including the one from $\phi$ -- we take $$\phi_v(t,u)= [\OO_{E,v}^\times:U_v]\mathbf{1}_{U_v}(t)\mathbf{1}_{\OO_F^\times}(u)
,$$ so:
\begin{align*}
w{h\over h_U} \nu_U [\OO_{E,v}^\times:U_v]=&w[ \OO_E^\times \bks\widehat{\OO}_{E,v}^\times: \OO_{E,U}^\times \bks U]^{-1} [\frakN(\OO_E^\times):\frakN(\OO_{E,U}^\times)]^{-1}  [\widehat{\OO}_E^\times : U] \\ 
=& w [\mu(\OO_E):\mu({U})]=w_U^{-1}.
\end{align*}

This computation together with \eqref{eqn:expand}, \eqref{eqn:expand2} gives 

\begin{align*}
\Theta(\W) =&  \eps(y) |y|^{1\over 2} w_U^{-1}  \sum_{a\in E^\times U\bks E_{\A^\infty}^\times} \baar{\W}(a) \sum_{t\in E, u\in \frakN(\OO_{E,U}^\times)\bks F^+} \phi^{p\infty}(a^{-1}yt,\frakN(a)y^{-1}u) \\
& \times \one_{\OO_{E,U,p}^\times}(a^{-1}yt) \one_{d_{F_p}^{-1,\times}}(\frakN(a)y^{-1}u) \,\ex_\infty(iy_\infty u\frakN(t)) \, \ex(x u \frakN(t))  \\
=&   \eps(y)\baar{\W}(y) |y|^{1\over 2} w_U^{-1}\sum_{a\in E^\times U\bks E_{\A^\infty}^\times} \baar{\W}(a)  \sum_{t\in E, u\in \frakN(\OO_{E,U}^\times)\bks F^+}  \one_{\widehat{\OO_{E,U}}\cap \OO_{E,U,p}^{\times}} (a^{-1}t )  \\
&\times  \one[\frakN(a)yu\OO_F= d_F^{-1}]\,\ex_\infty(iy_\infty u\frakN(t))\,  \ex(x u \frakN(t)) 
\end{align*}
where we have made the change of variable $a\to ay^{\infty}$.

Now we make the substitution $u\frakN(t)=\xi$ and observe that the contribution to the $\xi^\textrm{th}$ term  is equal to zero if $(\xi y d_{F},p)\neq 1$, and otherwise it equals $\baar{\W}(a)$ times the cardinality of the set
\begin{align*}
R_{a^{-1}}(\xi,y)= \Big\{  &(t,u)\in\OO_{E}\times F^+\,|\, t\in U_{p}, u\frakN(t)=\xi, \frakN(t/a)\OO_F=\xi y d_F \Big\}\Big/\frakN\left(\OO_{E,U}^\times\right),
\end{align*}
which admits a surjection  $\pi:(t,u)\mapsto a^{-1}t\OO_{E}$ to the set $\mathbf{r}_{a^{-1}}(\xi y d_F)$ of  ideals $\mathfrak{b}\subset \OO_{E}$ in the  $U$-class $a^{-1}$, whose norm is $ \frakN(\mathfrak{b})=\xi y d_F$. 
The fibres of $\pi$ are in bijection with $\OO_{E,U}^\times/\frakN(\OO_{E,U}^\times)$ which has cardinality $w_U$. We deduce  the following description of the Fourier coefficients of $\Theta(\W)$. 
\begin{prop}\label{prop:fouriertheta}
 The series $\Theta(\W)$ belongs to $S_{1}(K_{1}(\Delta(\W)),\eps\baar{\W}|_{F_{\A}^{\times}})$, where $\Delta(\W)=\Delta\frakN(\mathfrak{f}(\W))$. Its Fourier coefficients are given by
$$a(\Theta(\W),m)=\sum_{\substack{\mathfrak{b}\subset\OO_{E}\\ \frakN(\mathfrak{b})=m}}\W(\mathfrak{b})=r_{\W}(m)$$
for $(m,p)=1$ and vanish for $(m,p)\neq 1$.
\end{prop}

\begin{coro}\label{theo:thetameasure} The functional $\Theta$ of Definition \ref{deftheta} is  a measure on $\calG'$ with values  in ${\bf S}_{1}(K_{1}(\Delta), \eps)$, defined over $\Q_{p}$. 
\end{coro}
\begin{proof} The distributional  property is obvious from the construction or can be  seen from the $q$-expansion given above, from which boundedness is also clear. Cf.  also \cite[Theorem 6.2 ]{HT}, where a slightly different theta measure is constructed.
\end{proof}

\begin{lemm}\label{tauW} Assume that $(D_{E}, D_{F}p)=1$.
The theta series admits a functional equation
$$W_{\Delta(\W)} \Theta(\W)=(-i)^{[F:\Q]}\W(d_{F}^{(p)})\baar{\W}(\mathfrak{D}_{E})\tau(\baar{\W})\Theta(\baar{\W})$$
where $\mathfrak{D}_{E}$ is the relative different, $d_{F}^{(p)}$ is the prime-to-$p$ factor of the different,   and $\tau(\baar{\W})=\prod_{v\vert p}\tau(\baar{\W}_{v})$ with
$$\tau(\baar{\W}_{v})=|\pi_{v}|^{-c/2}\int_{E_{v}^{\times}} \baar{\W}_{v}(h_{v})\e_{v}(-\Tr_{E_{v}/F_{v}}(h_{v}))\, dh_{v}$$
if the relative norm of the conductor of $\baar{\W}_{v}$ is $\pi_{v}^{c}\OO_{F,v}$.
\end{lemm}
\begin{proof}
Let
\begin{align*}
\phi_{\W}(g,t,u)&=\int_{H(F)\bks H(\A)} \W({h^{-1}})r(g,h)\phi(t,u)\, dh\\
\phi'_{\W}(g,t,u)&=\eps\W(\pi_{\Delta(\W)})\int_{H(F)\bks H(\A)} \W({h^{-1}})r(gW_{\Delta(\W)},h)\phi(t,u)\, dh
\end{align*}
for $(t,u)\in E_{\A}\times F_{\A}^{\times}$. The behaviour in $g$ is through the Weil representation. 

Then we have 
\begin{align*}
W_{\Delta(\W)}\Theta(\W)(g)&=\eps\W(\det(g))\sum_{(t,u)\in E\times{F^{\times}}} \phi'_{\W}(g, t,u)\\
\Theta(\baar{\W})(g) &= \sum_{(t,u)\in E\times{F^{\times}}} \phi_{\baar{\W}}(g, t,u)
\end{align*}
so that the lemma follows if we show that for all $(t,u)\in E_{\A}\times F_{\A}^{\times}$
\begin{gather}\label{bibi}
\eps\W(\det(g))\phi'_{\W}(g,t,u)=(-i)^{[F:\Q]}\baar{\W}(\mathfrak{D}_{E})\tau(\baar{\W})\eps\W(u)\phi_{\baar{\W}}(g,\baar{t}, u)
\end{gather}
where $\baar{t}$ is the conjugate of $t$ under the nontrivial automorphism of $E$ over $F$.  We write 
$$\wtil{\tau}(\baar{\W})=(-i)^{[F:\Q]}\W(d_{F}^{(p)})\baar{\W}(\mathfrak{D}_{E})\tau(\baar{\W})$$
for short.

We claim that it suffices to prove \eqref{bibi}  for $g=1$. Indeed it is clear that this implies the same result for all $g\in {\bf SL}_{2}(\A)$ by acting via the Weil representation on both  sides (viewed as functions of $(t,u)$). Then it suffices to verify it for the elements of the form $d(y)=\smalltwomat 1 {}{}y$:
\begin{multline*}
\eps(y)\W(y)r(d(y))\phi'_{\W}(1,t,u)=\wtil{\tau}(\baar{\W})\eps(y)\W(y)r(d(y))[\eps\W(u)\phi_{\baar{\W}}(1, \baar{t}, u)]\\
=\wtil{\tau}(\baar{\W})\eps\W(y)\eps\W(y^{-1}u)r(d(y))\phi_{\baar{\W}}(1,\baar{t},u)=\wtil{\tau}(\baar{\W})\eps(u)\W(u)\phi_{\baar{\W}}(d(y),\baar{t},u).
\end{multline*}

We now prove \eqref{bibi} for $g=1$, thus dropping $g$ from the notation. We can write 
\begin{multline*}
\phi'_{\W}(t,u)=\int_{H(F)\bks H(\A^{p\Delta})} \W({h_{0}^{-1}})r(1,h)\phi^{p\Delta}(t,u)\, dh_{0}\\
{\prod_{v|p\Delta}}\W(\pi_{v}^{c_{v}})\int_{H(F_{v})}\W(h_{v}^{-1})r(W_{\pi_{v}^{c_{v}}},1)\phi(h^{-1}_{v}t, \nu(h_{v})u)\, dh_{v}
\end{multline*}
were $c_{v}$ is the appropriate exponent.  We can rewrite this as 
$$\phi'_{\W}(t,u)=\phi'{}^{\Delta p}_{\W}(t,u)\prod_{v\vert \Delta p}\phi'_{\W, p}(t,u)$$ with obvious notation. A similar factorisation holds for $\phi_{\W}(t,u)$.

For $v\nmid \Delta p$ we  have, by the explicit description of $\phi_{v}$ (dropping the subscripts $v$):
\begin{multline*}r(h)\phi(t,u)=\phi(h^{-1}t, \nu(h)u)=\phi(\pi_{d_{F}}u\baar{h} t, \nu(h)u)\\
=\phi(\pi_{d_{F}}uh\baar{t}, \nu(h)^{-1}u^{-1}\pi_{d_{F}}^{-2})
= r((\pi_{d_{F}}uh)^{-1})\phi(\baar{t}, u).
\end{multline*}
A change of variable and integration over $H(F)\bks H(\A^{\Delta p})$ then gives
\begin{align}\label{bibitame}
\phi'{}^{\Delta p}_{\W}(t,u)=\eps^{\Delta p}(ud_{F})\W^{\Delta p}(ud_{F})\phi^{\Delta p}_{\baar{\W}}(\baar{t}, u).
\end{align}

For $v|\Delta$ we have by  \eqref{bibi'delta} below and the previous argument
\begin{multline*}
\eps(\pi) r(W_{\pi},h) (\pi)\phi(t,u)=\eps(u)\kappa(v)\phi(h^{-1} t\pi_{\mathfrak{D}}, \pi^{-1} \nu(h)u)
=\eps(u)\kappa(v) r(u^{-1}\pi_{d_{F}^{-1}}h^{-1}\pi_{\mathfrak{D}}) \phi(\baar{t}, u)
\end{multline*}
where $\pi_{\mathfrak{D}}\in \OO_{E,v}^{\times}$ is a generator of the local relative different of $E_{v}/F_{v}$. After change of variable and integration, we obtain 
\begin{align}\label{bibidelta}
\phi'_{\W,v}(t,u)=\kappa(v)\eps_{v}(u)\W_{v}(ud_{F})\baar{\W}_{v}(\mathfrak{D})\phi_{\baar{\W},v}(\baar{t}, u).
\end{align}

For $v|p$ we have
\begin{multline*}
\W(\pi^{c})\int_{H(F_{v})} \W(h^{-1}) r(h, w_{\pi^{c}})\phi(t, u) \, d^{\times}h\\
=|\pi|^{-c/2}\int_{E^{\times}}\int_{E}\baar{\W}(\pi^{-c}h) 
\e(-\pi^{-c}u\nu(h)\Tr(h^{-1}t\baar{\xi})) 	\phi(\xi, \pi^{-c}\nu(h)u) \, d\xi\, d^{\times}h
\end{multline*}
Using the fact that $\phi(\xi,u)\,d\xi=\phi(\xi,u)\, d^{\times }\xi$, and a change of variables $\zeta= \pi^{-c}uh\xi\baar{t}$, this equals 
$$\W(u)\tau(\baar{\W}) \int_{E^{\times}} \W(\xi\baar{t}) \phi(\xi, \nu(t\xi)u)		\, d^{\times}\xi$$
after integration, where the new second argument in $\phi$ gives the condition for the integral in $d\zeta$ to be nonzero. We observe that $\phi(\xi)=\phi(\xi^{-1})$ so that with the  the new variable $h'=\xi\baar{t}$, and reintroducing $v$ in the notation,  this can be rewritten as
$$\W_{v}(u)\tau(\baar{\W}_{v})\int_{E^{\times}_{v}}\baar{\W}_{v}(h'_{v})^{-1}\phi({h'}^{-1}\baar{t}, \nu(h'_{v})u)\, d^{\times}h'_{v}$$
so that
\begin{align}\label{bibip}
\phi'_{\W,v}(t,u)=\eps_{v}(d_{F})\eps_{v}(u)\W_{v}(u)\tau(\baar{\W}_{v})\phi_{\baar{\W},v}(\baar{t},u).
\end{align}

Putting together  \eqref{bibitame}, \eqref{bibidelta}, \eqref{bibip} and 
using the formula $\prod_{v|\Delta}\kappa(v)=(-i)^{[F:\Q]}\eps(d_{F})$ from \cite[p. 127]{shouwu},\footnote{Recall that our $\kappa(v)$ are the inverses of the $\kappa(v)$ of \emph{loc. cit.}.} we obtain \eqref{bibi} as desired.
\end{proof}

\subsection{Fourier expansion of the theta measure / II}\label{2.5}

For later use in computing the trace of the convolution of the theta measure with the Eisenstein measure (defined below), we need to consider the expansion of $\Theta(\W)^{(\delta)}(g)=\Theta(\W)(gW_\delta)$ for $g=\begin{pmatrix}y & x \\& 1\end{pmatrix}$; for such a $g$ we have 
$$\begin{pmatrix}y & x \\& 1\end{pmatrix}W_\delta= \begin{pmatrix}1 & x \\& 1\end{pmatrix} \begin{pmatrix}
y&\\& \pi_\delta \end{pmatrix} w_\delta$$
where $\pi_\delta$ is an id\`{e}le with components $\pi_v$ at $v\vert \delta$ and~$1$ everywhere else.  
Here $\pi_v$ is a uniformiser chosen to satisfy $\eps(\pi_v)=1$.

The modular form  $\Theta(\W)^\delta$  can be expanded in the same way as in \S\ref{sec:expandtheta}, except that for $v\vert \delta$ we need to replace $\phi_v(t,u)=\one_{\OO_{E,v}}(t)\one_{d_F^{-1,\times}}(u)$ by
\begin{equation}\label{bibi'delta}		
\begin{split}
W_\delta\phi_v(t,u)=&\eps_{v}(\pi_{v})\begin{pmatrix}1&\\&\pi_v\end{pmatrix}\gamma(u)\widehat{\one_{\OO_{E,v}}}(t)\one_{d_{F,v}^{-1,\times}}(u)\\
= &\eps_v(u)\kappa(v) \one_{\frakD_{v}^{-1}}(t)\one_{d_F^{-1,\times}}(\pi_\delta^{-1} u)
\end{split}
\end{equation}
Here  recall that $\frakD$ is the relative  different of $E/F$;  and that $w$ acts as Fourier transform in $t$ with respect to the quadratic form associated with $u\frakN$, with the normalising constant  $\gamma(u)=\gamma(E_v,u\frakN)$ as described in \eqref{gamma}. 

The computation of the expansion can then be  performed exactly as in \S\ref{sec:expandtheta}. We omit the details but indicate that the relevant substitution is now $a\to \pi_{\frakd}ay$, where $\frakd$ is an ideal of $\OO_{E}$ of norm $\delta$ and $\pi_{\frakd}\in\widehat{\OO_{E}}$ is a generator with components equal to~$1$ away from $\frakd$.

\begin{prop}\label{thetafw} The Whittaker-Fourier coefficients of the series $\Theta(\W)^{(\delta)}$ are given by
\begin{gather*}
\tilde{a}(\Theta(\W)^{(\delta)}, y)= \eps\W(y)|y|^{1/2} \kappa(\delta)\W(\frakd)\eps_{\delta}(y) r_{\W}(yd_{F}), 
\end{gather*}
where $\kappa(\delta)=\prod_{v\vert \delta}\kappa(v)$.
\end{prop}

\section{Eisenstein measure} In this section we construct a measure (cf. \S\ref{measure}) valued in Eisenstein series of weight one, and compute its Fourier expansion.
\subsection{Eisenstein series}
Let $k$ be a positive integer, $M$ an ideal 
of $\OO_F$,
 and $\varphi\colon F_\A^\times/F^\times\to \C^{\times}$ a finite order character of conductor dividing $M$ satisfying $\varphi_v(-1)=(-1)^k$ for $v\vert \infty$. Let 
\begin{align}\label{dirichlet}
L^{M}(s,\varphi)=\sum_{(m,M)=1} \varphi(m)\N(m)^{-s}
\end{align}
where the sum runs over all nonzero ideals of $\OO_{F}$. 

Let $B\subset \GL_2$ be the Borel subgroup of upper triangular matrices; recall the notation from  \S\ref{sec:hmf}, and the Iwasawa decomposition \eqref{iwadec}; the decomposition is not unique but the ideal  of $\widehat{\OO}_F$ generated by the lower left entry of the  $K_0(1)$-component is well-defined.

For $s\in\C$, define a function $H_{k,s}(g, \varphi)$ on $\GL_2(\A_F)$ by
\begin{align*}
H_{k,s}\left(g=qur(\theta); \varphi\right)=& \begin{cases}
 \left|{y_1\over y_2}\right|^s\varphi(y_1 a)\e_\infty(k\theta)&\textrm{if } u=\begin{pmatrix} a & b\\c & d \end{pmatrix} \in K_0(M)\\
0 &\textrm{if } u\in K_0(1)\setminus K_0(M_0).
\end{cases}
\end{align*}
where we have written $g=qur(\theta)$ with $q=\begin{pmatrix} y_1 & x\\ &y_2 \end{pmatrix}\in B(\A_F)$, $u\in K_0(1)$, $ r(\theta)\in~K_\infty$.

We define two \textbf{Eisenstein series}
\begin{align*} E^{M}_k(g,s;\varphi)= & L^{M}(2s,\varphi)\sum_{\gamma\in B(F)\bks \GL_{2}(F)} H_{k,s}\left(\gamma g; \varphi\right),\\
 \widetilde{E}_{k}^{M}(g,s;\varphi)=  &W_{M} E_k^{M}(g,s;\varphi)=\varphi^{-1}(\det g\pi_{M}) E_k^{M}(gW_{M},s;\varphi)
\end{align*}
which are absolutely convergent for $\Re s>1$ and continue analytically for all $s$ to (non-holomorphic) automorphic forms of level $M$, 
parallel weight $k$ and character $\varphi$ (for $E$) and $\varphi^{-1}$ (for $\wtil{E}$).
Here $W_M$ is as in \eqref{WMop}. The superscript  $M$ will be omitted from the notation when its value is clear from context.

\subsection{Fourier expansion of the Eisenstein measure}\label{3.2} We specialise to the case where $k$ is odd, 
$M=\Delta P$ with  $(\Delta, P)=1$,  $\varphi=\eps \phi$ with $\eps=\eps_{E/F}$ and $\phi$  a 
  character of conductor dividing $P$, trivial at infinity (in particular we have $\varphi_{v}(-1)=\eps_{v}(-1)\phi_{v}(-1)=-1$ as required).  We assume that $\Delta$ is squarefree. For $\delta\vert \Delta$ we compute\footnote{Cf. \cite[\S\S\ 3.5, 6.2 ]{shouwu}.} the Whittaker coefficients (c f. \S\ref{sec:fourier}; we suppress  $\varphi$, $M$ and $k$ from the notation) of $\widetilde{E}^{(\delta)}$;
$$c_s^\delta(\alpha, y)=D_{F}^{-1/2}\int_{\A_F/F} \widetilde{E}\left(\begin{pmatrix}
y & x\\ & 1\end{pmatrix} W_\delta,s\right) \ex(-\alpha x)\, dx $$
for $\alpha\in F$ and $\delta$ dividing $\Delta$;  since $c_{s}(\alpha,y)=c_{s}(1, \alpha y)$ for $\alpha\neq 0$, we can restrict to $\alpha=0$ or~$1$. The choice of uniformisers  $\pi_{v}$ at $v\vert \delta$ implicit in the above formula is made so that $\eps(\pi_{v})=1$ to save some notation.

\begin{prop}\label{prop:eis} In the case just described, the Whittaker coefficients $c_s^\delta(\alpha,y)$ of the Eisenstein series  $\widetilde{E}^{(\delta)}_k(g,s;\varphi)$ are given by 
\begin{align*}
c_s^\delta(0, y)=&
	\begin{cases} {1\over D_{F}^{1/2}\N(\Delta P)^{s}} \eps\phi(y)|y|^{1-s}V_{k, s}(0)^{[F:\Q]} L^{(P)}(2s-1, \eps\phi) 			& {\rm if\ \ } \delta=1\\
				0 & {\rm if\ \ } \delta\neq 1,
	\end{cases}\\
	&\\
c_s^\delta(1,y)=& { { \ \N(\delta)^{s-1/2}  } \over {D_F^{1/2} \N(\Delta P)^{s}}} \eps {\phi} (y) |y|^{1-s} \kappa(\delta) \phi(\delta) \eps_{\delta}(y)\phi_{\delta}(y^{\infty}d_{F})|y\pi_{\delta} d_{F}|_{\delta}^{2s-1}		 \sigma_{k,s,\eps\phi}(y) 
 \\
&\textrm{if $yd_F$ is integral, and $c_s^\delta(1,y)=0$ otherwise,}
\end{align*}
where  $\kappa(\delta)=\prod_{v\vert \delta}\kappa(v)$ with $\kappa(v)$ as in \eqref{gamma} and  
$$\sigma_{k,s,\varphi}(y)=\prod_{v\nmid\Delta M\infty} \sum_{n=0}^{v(yd_F)} \varphi_v(\pi_v)^n|\pi_v\vert ^{n(2s-1)}\prod_{v\vert \infty} V_{k,s}(y_v)$$
with $$V_{k,s}(y)=\int_\R {e^{-2\pi i y x}\over (x^2+1)^{s-k/2} (x+i)^{k}}\,dx.$$
\end{prop}

\begin{proof}
We use the Bruhat decomposition 
$$\GL_2(F)=B(F) \coprod B(F)wN(F)$$
with $w=\twomat{}{-1}{1}{}$ and the unipotent subgroup $N(F)\cong F$ via $N(F)\ni\begin{pmatrix} 1 &x\\ &1\end{pmatrix}\mapsfrom x\in F$, to get
\begin{align*} \eps\phi(y)\phi(\pi_{M/\delta})c_s^\delta(\alpha,y)&=L(2s,\varphi)D_F^{-1/2}\int_{\A_F/F} H_s\left(\begin{pmatrix}
y & x\\ & 1\end{pmatrix}W_{M/\delta}\right) \ex(-\alpha x)\, dx\\
&+L(2s,\varphi)D_F^{-1/2}\int_{\A_F}  H_s\left(w\begin{pmatrix}
y & x\\ & 1\end{pmatrix}W_{M/\delta}\right) \ex(-\alpha x)\, dx.
\end{align*}

At any place $v\vert M/\delta$, we have the decomposition
$$\begin{pmatrix}
y_v& x_v\\ & 1\end{pmatrix}W_{M/\delta,v}=\begin{pmatrix}y_v&\pi_v x_v\\&\pi_v\end{pmatrix}\twomat {}{1} {-1}{}$$
so that the first summand is always zero.

For the second integral, we use the identity
$$w\begin{pmatrix}
y & x\\& 1
\end{pmatrix}= \begin{pmatrix}
1 & \\ & y
\end{pmatrix} \begin{pmatrix}
&-1 \\1 & xy^{-1}
\end{pmatrix}$$
and the substitution $x\to xy$ to get
$$\int_{\A_F} H_s\left(w\begin{pmatrix}
y& x\\ & 1
\end{pmatrix}W_{M/\delta} \right) \ex(-\alpha x)\, dx=|y|^{1-s} \prod_v V_s^{M/\delta}(\alpha_v y_v)$$
where for $y\in F_v$,
\begin{gather}\label{eqn:Vs} V_s^M(y)=\int_{F_v} H_s\left(\begin{pmatrix} & -1 \\ 1 & x\end{pmatrix} W_{M,v}\right) \ex(-xy)\, dx.\end{gather}
\paragraph*{Archimedean places} As in \cite[Proposition 3.5.2]{shouwu}. 
\subsubsection*{Nonarchimedean places $v\nmid  M/ \delta$} If $v$ is a finite place, we have $\begin{pmatrix} & -1\\1 & x \end{pmatrix}\in \GL_2(\OO_{F,v})$ if $x\in\OO_{F,v}$, and otherwise we have the decomposition 
$$\begin{pmatrix} & -1 \\ 1 & x \end{pmatrix}= 
\begin{pmatrix}x^{-1} & -1 \\  & x \end{pmatrix}
\begin{pmatrix} 1&  \\
x^{-1} & 1 \end{pmatrix}.$$ 
Therefore 
\begin{gather}\label{Hs} H_{s,v}\left(\begin{pmatrix} & -1 \\ 1 & x \end{pmatrix}\right)=\begin{cases}
\baar{\varphi}_v(x)|x|^{-2s}& \textrm{if }v(x)\leq -1; \\
1 &\textrm{if } v\nmid M, v(x)\geq 0;\\
0 &\textrm{if } v\vert \delta, v(x)\geq 0.
\end{cases}
\end{gather}

\paragraph*{The case $v\nmid M$}
We deduce that
\begin{align*}
V_s^{M/\delta}(y)&=\int_{\OO_{F,v}} \ex(-xy)\,dx +\sum_{n\geq 1} \int_{\OO_{F,v}^\times} \baar{\varphi}_v(x\pi_v^{-n})|x\pi_v^{-n}|^{-2s} \ex(-xy\pi_v^{-n})\,d(\pi_v^{-n}x)\\
&=\one[y\in d_F^{-1}]+ \sum_{n\geq 1} \varphi_v(\pi_v)^n|\pi_v\vert ^{n(2s-1)}\int_{\OO_{F,v}^\times}\ex(-xy\pi_v^{-n})\, dx.
\end{align*}
The integral evaluates to $1-|\pi_v\vert $ if $v(yd_F)\geq n$, to $-|\pi_v\vert $ if $v(yd_F)= n-1$, and to zero otherwise. Therefore we have $V_s^M(y)=0$ unless $v(yd_F)\geq 0$ in which case if $y\neq 0$
\begin{align*} V_s^{M/\delta}(y)&= 1 +(1-|\pi_{v}|) \sum_{n=1}^{v(yd_F)} \left(\varphi_v(\pi_v)|\pi_v\vert ^{2s-1}\right)^n-|\pi_{v}|(\varphi_v(\pi_v)|\pi_v\vert ^{2s-1})^{v(yd_f)+1}\\
&= \left(1-\varphi_v(\pi_v)|\pi_v\vert ^{2s}\right)\sum_{n=0}^{v(yd_F)} \varphi_v(\pi_v)^n|\pi_v\vert ^{n(2s-1)}\\
&= L_v(2s,\varphi)^{-1} \sum_{n=0}^{v(yd_F)} \varphi_v(\pi_v)^n|\pi_v\vert ^{n(2s-1)};
\end{align*}
whereas for $y=0$, we have
\begin{align*} V_s^{M/\delta}(0)&= 1 +(1-|\pi_{v}|) \sum_{n=1}^{\infty} \left(\varphi_v(\pi_v)|\pi_v\vert ^{2s-1}\right)^n\\
&= 1+(1-|\pi_{v}|)(1-\varphi_{v}(\pi_{v})|\pi_{v}|^{2s-1})^{-1}(1-\varphi_{v}(\pi_{v})|\pi_{v}|^{2s})\\
&=L_{v}(2s, \varphi)^{-1}L_{v}(2s-1 ,\varphi)
\end{align*}
\paragraph*{The case $v\vert \delta$} Again by \eqref{Hs} we find
\begin{align*}
V_s^{M/\delta} (y)=\sum_{n\geq 1} \int_{\OO_{F,v}^\times} \baar{\varphi}_v(x\pi_v^{-n})|x\pi_v^{-n}|^{-(2s-1)} \ex(-xy\pi_v^{-n})\,dx.
\end{align*}
All the integrals vanish except the one with $n=v(yd_{F})+1$ which gives 
 $$\eps_v(y\pi_v^n){\phi_v}(y\pi_{d_{F},v}\pi_v)|y\pi_{d_{F},v}\pi_v\vert ^{2s-1}|\pi_v\vert ^{1/2}\kappa(v);$$
 therefore we have\footnote{Recall that we always choose $\pi_v$ so that $\eps_v(\pi_v)=1$.}
$$V_s^{M/\delta}(y)=\eps_v(y)\phi_v(y\pi_{d_{F},v}\pi_v)|y\pi_{d_{F},v}\pi_v\vert ^{2s-1}|\pi_v\vert ^{1/2}\kappa(v)$$
if $y\neq 0$ and $v(yd_{F})\geq 0$ and $V_s(y)=0$ otherwise. In particular, we see that if $\delta\neq 1$ then $V_{s}(0)=c_{s}(0,y)=0$. 

\paragraph*{Places $v\vert M/\delta$}
For  $ w \twomat 1x{}1\twomat {}{1}{-\pi_v^{v(M)}}{}= \twomat {\pi_v^{v(M)}} {}{-x\pi_v^{v(M)}} {1}$ we have the decompositions 
\begin{align*}
\twomat {\pi_v^{v(M)}} {}{-x\pi_v^{v(M)}}{1}=&\twomat {-\pi_v^{v(M)}} {}{} {1}\twomat {-1} {}{-x\pi_v^{v(M)}}{1}\\
=&\twomat {x^{-1}}{-\pi_v^{v(M)}} {}{x\pi_v^{v(M)}}\twomat{}1{-1}{x^{-1}\pi_v^{-v(M)}} :
\end{align*}
for $v(x)\geq 0$ we use the first one to find
$$H_s\left(\twomat {-\pi_v^{v(M)}} {}{x\pi_v^{v(M)}} {-1}\right)=\varphi_{v}(\pi_{v})^{v(M)}|\pi_{v}^{v(M)}|^{s};$$
for $v(x)<0$ the second decomposition shows that the integrand vanishes. We conclude that 
$$V^{M/\delta}_s(y)=\begin{cases}
\varphi_{v}(\pi_{v})^{v(M) }  |\pi_{v}^{v(M)}|^{s} &\textrm{ if $v(yd_{F})\geq 0$}\\\
 0 &\textrm{ otherwise}.
\end{cases}$$
The final formula follows from these computations. 
\end{proof}

We specialise to the case $s=1/2$,  and consider the rescaled holomorphic Eisenstein series:\footnote{Notice that these series do not depend on the ideal $P$ but only on its support.}
\begin{align}
\nonumber {\bf E}_{k,\eps\phi}^{\Delta P}(g)&={D_F^{1/2} \N( \Delta P)^{1/2}\over{ (-2\pi i )^{[F:\Q]} }}E_{k}^{\Delta P}(g,1/2; \eps\phi),\\
 \wtil{\bf E}^{\Delta P}_{k,\eps\phi}(g)&={D_F^{1/2} \N( \Delta P)^{1/2}\over{ (-2\pi i )^{[F:\Q]} }} \wtil{E}^{\Delta P}_{k}(g,1/2; \eps\phi). 
\label{tileis}
\end{align}

We further specialise to the case $k=1$.
\begin{coro}\label{eisfw} The Eisenstein series  $\wtil{\bf E}^{\Delta P}_{1,\eps\phi}$
belongs to $M_{1}(K_{1}(\Delta P), \eps\phi^{-1})$. The Whittaker-Fourier coefficients of $\wtil{\bf E}_{\eps\phi}^{(\delta)}=\wtil{\bf E}^{\Delta P, (\delta)}_{1,\eps\phi}$ for $\delta|\Delta$ are given by 
$$\tilde{a}^{0}( \wtil{\bf E}_{\eps\phi}^{(\delta)}, y)= \eps\phi(y)|y|^{1/2}{L^{(p)}(0, \eps\phi)\over 2^{g}}$$
if $\delta=1$ and $\tilde{a}^{0}( \wtil{\bf E}_{\eps\phi}^{(\delta)}, y)=0$ otherwise; and 
$$c^\delta(y)=\tilde{a}( \wtil{\bf E}_{\eps\phi}^{(\delta)}, y)=\eps\phi(y)|y|^{1/2} \kappa(\delta) \phi(\delta) \eps_\delta\phi_\delta(y^{\infty}d_{F}){\sigma}_{\eps\phi}(y^{\infty} d_{F}),$$
where for any integral ideal $m$ of $\OO_{F}[\Delta^{-1}P^{-1}]$,
$${\sigma}_{\eps\phi}(m)= \sum_{d|m} \eps\phi(d),$$
the sum likewise running over integral ideals of  $\OO_{F}[\Delta^{-1}P^{-1}]$.
\end{coro}
(If $m$ is an integral ideal of $\OO_{F}$ prime to $P$, then $\sigma_{\eps \one}(m)=r(m)$.) 
\begin{proof} This follows from Proposition \ref{prop:eis} together with the  evaluation 
$$V_{1,1/2}(t)=\begin{cases}
 0 & {\rm if\ } t<0\\
 -\pi i &{\rm if\ } t=0\\
 -2\pi i e^{-2\pi t} & {\rm if\ } t>0.
\end{cases}$$
which can be found in \cite[Proposition IV.3.3 (a), (d)]{GZ} (for the case $t=0$, this is deduced from (a) of \emph{loc. cit.} using $\lim_{s\to 0}{\Gamma(2s)\over \Gamma(s)}=1/2$).
\end{proof}

\begin{defi} Let $F_{\infty}'$ be the maximal abelian extension of $F$ unramified outside $p$, and let $\calG'_{F}=\Gal(F_{\infty}'/F)$. We define the {\bf Eisenstein (pseudo-) measure}\footnote{We \emph{do not} need to assume that $\Delta$ is squarefree when making the definition. See  after the definition for the meaning of the term pseudo-measure.} $\wtil{\bf E}_{\eps}$ on $\calG_{F}'$ 
by
$$\wtil{\bf E}_{\eps}(\phi)=\wtil{\bf E}_{\eps\phi}^{\Delta P}={D_F^{1/2} \N( \Delta P)^{1/2}\over{ (-2\pi i )^{g} }}\, {\wtil{E}^{\Delta P}_{\eps\phi}}$$
for any character $\phi$ of $\calG'_{F}$ of conductor dividing $P$ (it does not depend on the choice of $P$ once we require $P$ to satisfy $v\vert P\leftrightarrow v\vert p$). We denote with the same name the distribution induced on the group $\calG'$ of \S\ref{sec:calG} by 
$$\wtil{\bf E}_{\eps}(\W)=\wtil{\bf E}_{\eps}(\W|_{F_{\A}^{\times}}).$$

It has values in ${\bf M}_{1}(K_{1}(N\Delta ),\eps)$ and is defined over $\Q_{p}$. 
\end{defi}

To prove the soundness  of the definition, it is easy to see that the nonzero Fourier coefficients interpolate to a measure on $\calG_{F}'$, that is an element of $\Z_{p}\llbracket\calG_{F}'\rrbracket$. The $L$-values giving the constant term interpolate to the Deligne--Ribet $p$-adic $L$-function \cite{DR}; it is a pseudo-measure in the sense of Serre \cite{serre}, that is an element of the total quotient ring of $\Z_{p}\llbracket\calG_{F}'\rrbracket$ with denominators of a particularly simple form.

\section{The $p$-adic $L$-function}\label{sec:L}

\subsection{Rankin--Selberg convolution}\label{sec:rs} Let $f$, $g$ be modular forms of  
common level $M$, weights $k_{f}$, $k_{g}$, and characters $\psi_{f}$, $\psi_{g}$ respectively. We define a normalised Dirichlet series
 $$D^{M}(f,g,s)=L^{M}(2s-1,\psi_{f}\psi_{g})  \sum_{m}a(f,m)a(g,m)\N m^{-s},$$ 
 where the imprimitve $L$-function  $L^{M}(s,\varphi)$ of a Hecke character $\varphi$ of conductor dividing $M$ is as in \eqref{dirichlet}.

 When $f$ and $g$ are primitive forms of level $N_{f}$, $N_{g}$ (that is, normalised new eigenforms at those levels),  for a prime $\wp\nmid N_{f}$ denote by $\gamma_{\wp}^{(1)}(f)$, $\gamma_{\wp}^{(2)}{(f)}$ the two roots of the $\wp^{\mathrm{th}}$ Hecke polynomial of~$f$
$$P_{\wp,f}(X)=X^2-a(f,\wp) X+\psi_{f}(\wp)\N \wp^{k_{f}-1},$$
and  by  $\gamma_{\wp}^{(1)}(g)$, $\gamma_{\wp}^{(2)}{(g)}$  the analogous quantities for~$g$.
 Then the degree four Rankin--Selberg $L$-function $L(f\times g,s)$ with unramified Euler factors at~$\wp$ given by
 $$\prod_{i,j=1}^{2}\left(1-\gamma_{\wp}^{(i)}(f)\gamma^{(j)}_{\wp}(g)\N\wp^{-s}\right)^{-1}
$$
equals   the above Dirichlet series  
\begin{align*}
L(f\times g, s)=D^{N_{f}N_{g}}(f,g,s)
\end{align*}
if $N_{f}$ and $N_{g}$ are coprime.
 
Suppose now for simplicity that $k_{f}=2$, $k_{g}=1$, and $f$ is a cusp form (not necessarily primitive). The Rankin--Selberg convolution method\footnote{See \cite{shimura} or   \cite[Ch. V]{jacquet} for general treatments; our setting and normalisations are the same as in  \cite[Lemma 6.1.3]{shouwu} (where $g$ is a specific form, but the same calculation works in general to prove \eqref{eqn:rs}).} gives
\begin{align}\label{eqn:rs}
\langle f^{\rho}, g E^{M}_{1}(s;\psi_{f}\psi_{g})\rangle_{M}=D_{F}^{s+1}\left[{\Gamma(s+1/2)\over (4\pi)^{s+1/2} } \right]^{[F:\Q]} D^{M}(f\times g, s+1/2),
\end{align}
where $\langle\ , \ \rangle_{M}$ is the Petersson inner product \eqref{pet}.

\subsection{Convoluted measure and the $p$-adic $L$-function in the ordinary case} 
Consider the convolution pseudo-measure $\Theta*\widetilde{\bf E}_{\eps, N}$ on $\calG'$ defined by 
$\Theta*\widetilde{\bf E}_{\eps, N}(\W)= \Theta(\W)\widetilde{\bf E}_{\eps,N}(\baar{\W})$ for any character $\W\colon \calG'\to \Z_{p}^{\times}$,  where  $\widetilde{\bf E}_{\eps, N}=[N]\widetilde{\bf E}_{\eps}$. We deduce from it the (pseudo-)measure
\begin{gather}\label{Phidef}
\Phi(\W)={\rm Tr}_{\Delta}[\Theta * \widetilde{\bf E}_{\eps, N}({\baar{\W}})]={\rm Tr}_{\Delta}[\Theta(\W)\cdot[N]\widetilde{\bf E}_{\eps}(\baar{\W})] 
\end{gather}
on $\calG'$, which is a kind of $p$-adic   kernel of the Rankin--Selberg $L$-function as will be made precise below. It is valued in ${\bf M}_{2}(K_{0}(N),\C_{p})$. Notice that while $\Phi(\W)$, like $\widetilde{\bf E}_{\eps,N}$, is not a measure, we can see that, for any $\wp\vert p$,
$$U_{\wp}\Phi(\W)$$
is. Indeed its Fourier coefficients are the Fourier coefficients of $\Phi(\W)$ at ideals $m$ divisible by $\wp$, hence sums of coefficients of the   theta and Eisenstein series at pairs of ideals $(m_{1}=nm, m_{2}=(1-n)m)$ for some $n\in F$; since the coefficients of the theta series are zero at ideals $m_{1}$ divisible by $\wp$, only those pairs $(m_{1},m_{2})$ with $m_{1}, m_{2}$ both prime to $\wp$ contribute. In particular, the constant term of the Eisenstein series does not contribute to the Fourier expansion of $U_{\wp}\Phi$, which therefore belongs to $\Z_{p}\llbracket \calG'\rrbracket\otimes {\bf S}_{2}(K_{0}(N), \C_{p})$.

Thanks to  this discussion and the identity $\Lf=\alpha_{\wp}^{-1}\Lf\circ U_{\wp}$, the following definition makes sense.
\begin{defi}\label{def:prs} The $p$-adic Rankin--Selberg $L$-function is the element of $\OO_{L}\llbracket\calG\rrbracket\otimes L$ defined by 
$$L_{p}(f_{E}, \W)= D_{F}^{-2 } H_{p}(f) \Lf (\Phi(\W))$$
for any character $\W\colon\calG\to \OO_{L}^{\times}$, where
\begin{gather}\label{Hp}
H_{p}(f)= \prod_{\wp\vert p}\left(1-{1\over\alpha_{\wp}(f)^{2}}\right) \left(1-{\N\wp\over\alpha_{\wp}(f)^{2}}\right).
\end{gather}
\end{defi} 

\subsubsection{Functional equation}
The $p$-adic $L$-function admits a functional equation; we prove it in the case of anticyclotomic characters which is the only one we shall need. 

\begin{prop}\label{prop:few} Suppose that $\W$ is an anticyclotomic character of $\calG$, i.e., $\W|_{F_{\A^{\times}}}=1$. Then there  are  functional equations for the $p$-adic $L$-function
\begin{gather}\label{feq}
L_{p}(f_{E})(\W)=(-1)^{g}\eps(N)L_{p}(\W)
\end{gather}
and  for the analytic kernel
\begin{gather}\label{feq2}
\Phi(\W)=(-1)^{g}\eps(N) \Phi(\W).
\end{gather} 
In particular,  if $\eps(N)=(-1)^{g-1}$, we have 
$$\Phi(\W)=L_{p}(f_{E})(\W)=0.$$
\end{prop}
\begin{proof}
The functional equation for $L_{p}$ is implied by the functional equation for $\Phi$. We prove the latter by comparing the coefficients on both sides. From \eqref{b(m)} below,\footnote{Which does not use the present result. The formula \eqref{b(m)} is stated in the case when the anticyclotomic part $\W^{-}=\one$ but the very same calculation gives the result in general.}
  the coefficients of $\Phi(\W)$ are given by 
\begin{equation*}
b(m)= \sum_{\delta|\Delta}\sum_{\substack{n\in F \\ 0<n<1}} \eps_\delta((n-1)n)    r_{\W^{-}}((1-n)m\delta)\sigma_{\eps\one}(nm/N). 
\end{equation*}
(We use the notation $\one$ for the character of ideals defined by $\one(m)=1$ if $(m, p)=1$ and $\one(m)=0$ otherwise.) We rewrite this as $b(m)=\sum_{\delta, n} b_{\delta, n}(m)$ with, using $\eps_{\delta}(x)=\eps^{\delta}(x)$ for $x\in F^{\times}$ and writing in columns to highlight the factors:
\begin{align*}
b_{\delta,n}(m)
=&\eps_{\delta}(-1) &		=& (-1)^{g}\eps_{\Delta/\delta}(-1)  &\\
\cdot&\eps_{\Delta/\delta}((1-n)m)\eps_{\Delta/\delta}(nm) &		\cdot&\eps_{\Delta/\delta}((1-n)m)\eps_{\Delta/\delta}(nm) &\\
\cdot& \eps^{\Delta}((1-n)m) r_{\W}((1-n)m\delta) &		\cdot & r_{\W}((1-n)m\Delta/\delta)&\\
\cdot&\eps(N)  & & \cdot \eps(N) &\\
\cdot &\eps^{\Delta}(nm/N)  \sigma_{\eps\one}(nm/N)  & 		\cdot & \sigma_{\eps\one}(nm/N) &
= (-1)^{g}\eps(N)b_{\Delta/\delta, n}.
\end{align*}

Here we have used the following facts. In the first line,   $\eps_{\Delta}(-1)=\eps_{\infty}(-1)=(-1)^{g}$. In the third line, we have that $r_{\W}(m)=1$ if $m$ is divisible only by ramified primes in $E$, since in that case $m=\mathfrak{m}^{2}$ is a square and $\W(\mathfrak{m})^{2}=\W(m)=1$ -- this implies $\W(\mathfrak{m})=\pm 1$, hence $\W(\mathfrak{m})=1$ since $\W$, which is a character of $\calG\cong \Z_{p}^{1+g+\delta}$, has values in $1+p\Z_{p}$. Finally, in the third and fifth line one can observe that  if $q=\sigma_{\eps\one}$ or $q=r$, then $\eps^{\Delta}(m)q(m)=q(m)$; indeed this is trivial if $\eps^{\Delta}(m)=1$, while both sides are zero if $\eps^{\Delta}(m)=-1$. 
\end{proof}

\subsection{Interpolation property}\label{sec:pLf} 
We manipulate the definition to show that the $p$-adic $L$-function  $L_{p}(f_{E})(\W)$ of Definition \ref{def:prs} interpolates the special values of the complex Rankin--Selberg $L$-function $L(f_{E},\W,s)=L(f\times \Theta(\W),s)$ defined in the Introduction.

We will need a few technical lemmas.

\begin{lemm}\label{period}  Let $P$ be an ideal of $\OO_{F}$ such that $v\vert P$ if and only if $v\vert p$. We have
$$\langle W_{NP}f_{\alpha}^{\rho}, f_{\alpha}\rangle_{NP} =  \alpha_{P}(f)(-1)^{g}\tau(f)   H_{p}(f)\langle f, f\rangle_{N}$$
with $H_{p}(f)$ as in \eqref{Hp} 
and
$$\alpha_{P}(f)=\prod_{\wp\vert p}\alpha_{\wp}(f)^{v_{\wp}(P)}$$
\end{lemm}
\begin{proof} When $P=P_{0}:=\prod_{\wp\vert p}\wp$, this is the direct generalisation of  \cite[Lemme 27]{PRlondon}, and it is proved in the same way. In general, we can write $P=P_{0}P_{1}$, and then
$$W_{NP}f_{\alpha}^{\rho}=\N (P_{1}) [P_{1}] W_{NP_{0}}f_{\alpha}^{\rho}$$
Observing  that $[\wp]$ is the adjoint of $U_{\wp}$ for the Petersson inner product and that $\N (P_{1})=[K_{0}(NP):K_{0}(NP_{0})] $, we deduce
$$\langle W_{NP}f_{\alpha}^{\rho}\rangle_{NP}= [K_{0}(NP):K_{0}(NP_{0})] \langle  W_{NP_{0}}f_{\alpha}^{\rho}, U(P_{1})f_{\alpha}\rangle_{NP}=\alpha_{P_{1}} \langle  W_{NP_{0}}f_{\alpha}^{\rho},f_{\alpha}\rangle_{NP_{0}}.$$ The lemma then follows from this and the special case $P=P_{0}$. 
\end{proof}

For the next lemma, let $M, N$ be coprime; then  we define the space of  \emph{weakly $N$-old}  form of level $NM$ to be the subspace of $M _{k}(K_{1}(MN))$ spanned by by forms $f=[d]f'$ for some $d\vert N$ and some modular form $f'$ of level $N'M$ with $N'\vert d^{-1}N$. (This is often called simply the space of $N$-old form forms, but we have reserved that name for the span of forms $[d]f'$ as above with $d\neq 1$.)
\begin{lemm}\label{junk} For a character $\varphi$ of conductor dividing $M$ and an ideal $N$ prime to $M$, let ${ E}_{\varphi}^{M}=E_{1}^{M}(g, 1/2;\varphi)$, $\wtil{ E}_{\varphi}^{M}=W_{M}{ E}_{\varphi}^{M}$.  We have
$$W_{M} [N] \wtil{E}^{M}_{\varphi}=  { E}^{MN}_{\varphi}+ E^{\rm old}$$
where the form $E^{\rm old}$ is weakly old at $N$ (in particular, $E^{\rm old}$ is orthogonal to newforms of exact level $N$ and so is its product with any other form of level prime to $N$).
\end{lemm}
\begin{proof} It is easy to see that $W_{M} [N]\wtil{E}^{M}_{\varphi}= [N]{ E}^{M}_{\varphi}$.
Then we are reduced to showing that   
$$ [N] { E}^{M}_{\varphi}=  {E}^{MN}_{\varphi}+ E^{\rm old}.$$
In fact we have more generally and more precisely  that
$$ \N(N)^{s-}E^{M}_{\varphi}(g\smalltwomat 1 {}{}{\pi_{M}},s)=\sum_{d\vert N} {\varphi(d)\over \N(d)^{2s} }E_{\varphi}^{MN/d}(g,s) ;$$
this is  \cite[Lemma 6.1.4]{shouwu} with $\eps$ replaced by $\varphi$. The lemma then holds  with 
\begin{align*}
E^{\rm old}=\sum_{d\vert N, d\neq 1} {\varphi(d)\over \N(d) }E_{\varphi}^{MN/d}.\qedhere
\end{align*}
\end{proof}

\begin{lemm}\label{nek5.9}
With  notation as in \S\ref{sec:rs}, we have
$$D([\Delta] f, \Theta({\W}), 1)={\W}({\frak D}) D(f, \Theta(\W), 1).$$
\end{lemm}
The proof is as in  \cite[\S\,I.5.9]{nekovar}. 

\begin{theo}\label{theo:interpolate}  Let $\W\colon \calG' \to \baar{\Q}^\times$ be a finite order character of conductor ${\frak f}$ divisible only by primes above $p$. 
Then we have
$$L_{p}(f_{E})(\W)=\frac{\W(d_{F}^{(p)}) \tau(\baar{\W})   \N(\Delta(\W))^{1/2}   V_{p}(f,\W) \baar{\W}(\Delta)}
{  \alpha_{\frakN(\mathfrak{f}(\W))}(f) \Omega_{f}} 
 L(f_{E},\baar{\W},1),
$$
where $\Omega_{f}=(8\pi^{2})^{g}\langle f, f \rangle_{N}$,  $\tau(\baar{\W})$ is as in Lemma \ref{tauW}, and
\begin{align}\label{eq:Vp}
V_{p}(f,\baar{\W})=\prod_{\wp\vert p}\prod_{\frak{p}|\wp}\left(1-{\baar{\W}(\frak{p})\over\alpha_{\wp}(f)}\right).
\end{align}
\end{theo}

\begin{proof} Denote $P=\frakN(\mathfrak{f}(\W))$, $\Delta(\W)=\Delta P$,  $\phi=\W|_{F_{\A}^{\times}}$. We suppose that $\W$ is ramified at all places $v\vert p$ (in this case, we have $V_{p}(f, \W)=1$). Then the result follows from the definition and the following calculation.
\begin{align*}
\Lf(\Phi(\W))&=\frac{ \langle  W_{NP} f_{\alpha}^\rho,{\rm Tr}_{\Delta}[ \Theta(\W) \widetilde{\bf E}_{\eps, N}(\baar{\W})]\rangle_{NP}}{ \langle W_{NP}f_{\alpha}^{\rho}, f_{\alpha}\rangle_{NP}}    \\
{\rm (L.\, \ref{period})} &=  \frac{  \langle  W_{N\Delta} f_{\alpha}^\rho, W_{\Delta(\W)} \Theta(\W) W_{\Delta(\W)} \widetilde{\bf E}_{\eps\phi^{-1}, N}^{\Delta(\W)}\rangle_{N\Delta(\W)} }
{  \alpha_{P}(f) (-1)^{g}\tau(f)   H_{p}(f) \Omega_{f}} 
\\
{\rm (L.\, \ref{junk}, L.\, \ref{tauW})} &=\frac{ (-i)^{g}\W(d_{F}^{(p)}) \tau(\baar{\W})\baar{\W}(\frakD)  D_{E}   }
{  \alpha_{P}(f) (-1)^{g}\tau(f) H_{p}(f)\Omega_{f}} 
\langle  W_{N}[\Delta]f_{\alpha}^\rho, \Theta(\baar{\W}){\bf E}^{N\Delta(\W)}_{\eps\phi^{-1}}\rangle_{N\Delta(\W)} \\
 &= \frac{ (-i)^{g}\W(d_{F}^{(p)})\tau(\baar{\W})\baar{\W}(\frakD)  }
 {  \alpha_{P}(f)  H_{p}(f)  \Omega_{f}}  
\langle [\Delta]f_{\alpha}^\rho, \Theta(\baar{\W}){\bf E}^{N\Delta(\W)\baar{\W}(\frakD)}_{\eps\phi^{-1}}\rangle_{N\Delta(\W)} \\
{\rm   (  eq.\, \eqref{eqn:rs})} &= \frac{\W(d_{F}^{(p)}) \tau(\baar{\W})    D_{F}^{2}  \N(\Delta(\W))^{1/2}  }
	{  \alpha_{P}(f)  H_{p}(f) \Omega_{f}}
	D^{N\Delta(\W)}([\Delta]f_{\alpha}, \Theta(\baar{\W}), 1) \\
{\rm (L.\, \ref{nek5.9})} &=\frac{ \W(d_{F}^{(p)})\tau(\baar{\W})   {D_{F}^{2}}  \N(\Delta(\W))^{1/2}   
 \baar{\W}(\Delta)}
{  \alpha_{P}(f) H_{p}(f) \Omega_{f}} 
 L(f_{E},\baar{\W},1)
\end{align*}
where we have used various results from \S\ref{sec:hecke}, and the fact that in our case $f^{\rho}=f$ as $f$ has trivial character. 

The previous calculation goes through in general with $\Delta(\W)$ replaced by $\Delta(\W)'={\rm l.c.m.}(\Delta(\W),\prod_{\wp\vert p}\wp)$; then one further needs to compare the imprimitive Dirichlet series $D^{\Delta(\W)'}(f_{\alpha}, \Theta(\baar{\W},1)$ with the $L$-value $L(f_{E}, \baar{\W}, 1)$. 
This is done in the same way as in the case of elliptic modular forms, see \cite[Lemme 2.3 (i) and \S 4.4 (III)]{PRlondon}.\footnote{Notice that, as in \emph{op. cit}, our $\Theta(\W)$ is not the primitive theta series when $\W$ is unramified at some $\wp\vert p$; in general we have
$$\Theta(\W)=\left(\prod_{\frakp\vert \wp\vert p}(1-\N(\wp)^{1/2}\W(\frakp)[\wp])\right)\Theta(\W)^{\rm prim}$$
if $\Theta(\W)^{\rm prim}$ is the primitive theta series (i.e. the normalised newform in its representation). This replaces the second-last equation on \cite[p. 21]{PRlondon}, whose $\Theta(\W)$ (respectively $\Theta(\W'')$ is our $\Theta(\W)^{\rm prim}$ (respectively our $\Theta(\W)$).
The factor  $V_{p}(f, \baar{\W})$ comes from the analogue of \cite[Lemme 23]{PRlondon}.} We omit the details since no new phenomena appear in our context and, strictly speaking, we do not need to use the precise form of the interpolation  result except in the ramified case (which already determines $L_{p}(f_{E})$ uniquely).
\end{proof}

\subsection{Factorisation} The $p$-adic analogue of the standard $L$-function of $f$ has been studied by several authors (Manin, Dabrowski, Dimitrov, \ldots). Let $\calG_{F}=\Gal(F_{\infty}/F)$ where $F_{\infty}$ is the maximal $\Z_{p}$-extension of $F$ unramifed outside $p$. 
\begin{theo} There is a a $p$-adic $L$-function $L_{p}(f)\in \OO_{L}\llbracket \calG'_{F}\rrbracket\otimes_{\OO_{L }}L$ uniquely determined by the following property: for each finite order character $\chi\colon \calG_{F}\to \baar{\Q}^{\times}$ of conductor ${\frak f}(\chi)$ divisible by all the  primes $\wp\vert p$, we have
$$L_{p}(f,\chi)= \chi(d_{F}^{(p)})  {\tau(\baar{\chi})\N(\mathfrak{f}(\chi))^{1/2}\over \alpha_{{\frak f}(\chi)}}
{L(f,\baar{\chi},1)\over \Omega_{f}^{+} }$$
where $\Omega_{f}^{+}\in \C^{\times}$ is a suitable period and $\tau(\baar{\chi})=\prod_{v\vert p}\tau(\baar{\chi}_{v})$ with 
$$\tau(\baar{\chi}_{v})= |\pi_{v}|^{-c/2}\int_{F_{v}^{\times}} \baar{\W}_{v}(x_{v})\e_{v}(-x_{v})\, dx_{v}$$
if $c=v({\frak f}(\chi))$.
\end{theo}

Similarly, we have $L_{p, \eps\alpha}(f_{\eps})$ and a period $\Omega_{f_{\eps}}^{+}$ satisfying
$$L_{p, \eps\alpha}(f_{\eps},\chi)=\chi(d_{F}^{(p)}){\tau(\baar{\chi})\N(\mathfrak{f}(\chi))^{1/2}\over \eps({\frak f}(\chi))\alpha_{{\frak f}(\chi)}} {L(f_{\eps},\baar{\chi},1)\over \Omega_{f_{\eps}}^{+} }$$
for ramified finite order characters $\chi$. (In fact we have $\eps({\frak f}(\chi)=1$ under our assumptions).)

For the proof of the existence of $L_{p}(f)$ we refer to  \cite{dimitrov}: notice that our $L_{p}(f, \chi)$ equals  $\chi(d_{F}^{(p)})L_{p}(\pi_{f}, \chi^{-1})$ in \emph{op. cit}, where moreover the notation $\tau(\chi_{v})$  refers to \emph{un}normalised Gau\ss\ sums. The definition  and properties of the period $\Omega_{f}^{+}$ and of a related period $\Omega_{f}^{-}$ (both of which are a priori defined up to an $M_{f}^{\times}$-ambiuguity), are given in \S\ref{sec:periods} below; here we need the following: $\Omega_{f}^{+}\Omega_{f}^{-}\sim \Omega_{f}$, and $\Omega_{f_{\eps}}^{+}\sim D_{E}^{-1/2} \Omega_{f}^{-}$ where $\sim$ denotes equality in $\C^{\times}/M_{f}^{\times}$.
Then from the complex factorisation $L(f_{E}, \chi\circ\frakN,s)=L(f, \chi, s)L(f_{\eps}, \chi,s)$ and the interpolation properties satisfied by each factor we obtain
\begin{gather}\label{factoris2}
L_{p}(f_{E},\chi\circ\frakN)
=  \chi(\Delta)^{2}{\Omega_{f}^{+}\Omega_{{f}_{\eps}}^{+}\over D_{E}^{-1/2}\Omega_{f}} 
   L_{p}(f,\chi)L_{p}(f_{\eps},\chi),\end{gather}
where the period factor is in $M_{f}^{\times}$ (in particular, it  is algebraic).

\subsection{Fourier expansion of the analytic kernel}\label{4.5} 
Consider the restriction of $\Phi$ to $\calG$, the Galois group of the maximal $\Z_{p}$-extension of $E$ unramified outside $p$. Any character $\W$ of $\calG$ decomposes uniquely as $\W=\W^{+}\W^{-}$ with $({\W}^{+})^{c}=\W$, $({\W}^{-})^{c}={\W}^{-1}$ (we say that $\W^{+}$ is cyclotomic and $\W^{-}$ is anticyclotomic or dihedral). Since we are interested in the case  $\eps(N)=(-1)^{g-1}$ in which   $\Phi$ is zero on the  anticyclotomic characters, we study the restriction of $\Phi$ to the cyclotomic characters. We can write $\W^{+}=\chi\circ\frakN$ for a Hecke character  $\chi\colon F^{\times} \bks F_{\A}^{\times}\to 1+p\Z_{p}$, and we denote
$$\Theta_{\chi}=\Theta(\chi\circ\frakN), \qquad \Phi_{\chi}=\Phi(\chi\circ\frakN).$$

From now on we  assume that $(\Delta,2)=1$ and all primes $\wp\vert p$ are split in $E$.
\begin{prop}\label{fourierPhi} The Fourier coefficients $b(m)=a_{p}(\Phi_{\chi},m)$ of the $p$-adic modular form $\Phi_{\chi}$ are given by 
\begin{align*}
 b(m)&= \sum_{\substack{n\in F \\ 0<n<1\\n\in Nm^{-1}\Delta^{-1}}}\chi((1-n)m) 
 \prod_{v\vert \Delta} \left[\one[v(nm)=0]+\eps_v((n-1)n)  \chi_v^{-2}(nm\wp_v/N)\right] & \\
 &\qquad\qquad \qquad\qquad\qquad\qquad\qquad \cdot r((1-n)m\Delta)\sigma_{\eps\chi^{-2}}(nm/N). 
 \end{align*}
\end{prop}

\begin{proof}
By  \eqref{fouriertrace}, the Fourier coefficient $b(m)$ of $\Phi_{\chi}={\rm Tr}_{\Delta}[\Theta_{\chi}\wtil{\bf E}_{\eps\chi^{2},N}]$ is given by 
$$b(m)=\sum_{\delta|\Delta}b^{\delta}(m\delta)$$
with
\begin{align*} b^{\delta}(m)=a(\Phi_{\chi}^{(\delta)}, m)=|y|^{-1}\tilde{a}(\Phi_{\chi}^{(\delta)}, y)&=|y|^{-1}\sum_{n\in F}\tilde{a}(\Theta_{\chi}^{(\delta)},(1-n)y)\,\tilde{a}(\wtil{\bf E}_{\eps\chi^{-2},N}^{(\delta)},ny)\\
&=|y|^{-1}\sum_{n\in F}\tilde{a}(\Theta_{\chi}^{(\delta)},(1-n)y)\,\tilde{a}(\wtil{\bf E}_{\eps\chi^{-2}}^{(\delta)},ny/\pi_{N})
\end{align*}
if $y\in F_{\A}^{\times}$ satisfies $y_{\infty}>0$ and $y^{\infty}d_{F}=m$.

Then by Proposition \ref{thetafw} and Corollary \ref{eisfw}, we have:\footnote{Recall that $\kappa(v)^{2}=\eps_{v}(-1)$.}
\begin{equation}\label{b(m)}
\begin{split}
b(m)= \sum_{\delta|\Delta}\sum_{\substack{n\in F \\ 0<n<1}} \eps_\delta((n-1)n)\chi^{-1}(\delta)\chi((1-n)m\delta)  \chi_\delta^{-2}(nm\delta/N)  \\
 \cdot r((1-n)m\delta)\sigma_{\eps\chi^{-2}}(nm/N) . 
 \end{split}
\end{equation}
 We interchange the two sums and notice that the term corresponding to $\delta$ and $n$ is nonzero only if $n\in Nm^{-1}\Delta^{-1}$ and $\delta_{0}|\delta$, where $$\delta_{0}=\delta_{0}(n)=\prod_{\substack{v\vert \Delta\\ v(nm)=-1}}\wp_{v}$$
  ($\wp_{v}$ being the prime corresponding to $v$). Now for each $n$ we can rewrite the sum over $\delta$ (omitting the factor $\chi((1-n)m)$ and those on the second line of \eqref{b(m)}, which  do not actually depend on~$\delta$) as
  \begin{align*}
&\eps_{\delta_{0}}((n-1)n)  \chi_{\delta_{0}}^{-2}(nm\delta_{0}/N) \sum_{\delta'|\Delta/\delta_{0}} \eps_{\delta'}((n-1)n) \chi_{\delta'}^{-2}(nm\delta'/N)  \\
&= \prod_{v\vert \delta_{0}} \eps_{\delta'}((n-1)n) \ \chi_{v}^{-2}(nm\wp_{v}) \prod_{v\vert \Delta/\delta_{0}}[1+\eps_{v}((n-1)n)\chi_{v}^{-2}(nm\wp_{v})].
\end{align*}
The asserted formula follows. 
\end{proof}

\begin{rema}\label{eps1}
If $v(nm)=-1$ then 
 $(n-1)\pi_{m}\pi_{v}\equiv n\pi_{m}\pi_{v}$ in $(\OO_{F,v}/\pi_{v}\OO_{F,v})^{\times}$, so that we actually have $$\eps_{v}((n-1)n)=\eps_{v}((n-1)\pi_{m}\pi_{v})\eps_{v}(n\pi_{m}\pi_{v})=1.$$ 
\end{rema}

We can now compute the Fourier coefficients of the analytic kernel giving the central derivative of the $p$-adic $L$-function in the cyclotomic direction. To this end, let
$$\nu\colon \Gal(\baar{\Q}/F)\to1+p\Z_p\subset \Q_p^\times$$
Since $\Lf$ is  continuous, we have
$${d\over ds}L_{p}( f_{E}, \nu^{s}\circ \frak N)={d\over ds}\Lf(\Phi(s))=\Lf({d\over ds}\Phi(s)).$$
In particular $L_{p;\nu\circ\frakN}'(f_{E},\one)=\Lf(\Phi'(0))$.

Let $\ell_{F}={d\over d s}_{|_{s=0}}{\nu^s}\colon F^{\times}\bks F_{\A^{\infty}}^{\times}\to \Q_{p}$ be the  $p$-adic logarithm associated with $\nu$.
\begin{prop}\label{Phi} Assume that $\eps(N)=(-1)^{g-1}$. Then  $\Phi(0)=0$ and the Fourier coefficients $b'(m)$ of 
$$\Phi_{\nu}'=\Phi'(0)={d\over d s}\Phi_{\nu^s}|_{s=0}$$ 
are nonzero only for $m$ integral and nonzero, in which case 
 $$b'(m)=\sum_v b'_{v}(m)$$
with the sum running over all finite places $v$ of $F$ and $b'_v(m)$ given for $(\prod_{\wp\vert p}\wp)\vert m$ by:
\begin{enumerate}
\item If $v=\wp$ is inert in $E$, then 
$$b_v'(m)= \sum_{\substack{n\in Nm^{-1}\Delta^{-1}\\ (p,nm)=1 \\ \eps_v((n-1)n)=1\ \forall v\vert \Delta \\ 0<n<1}}
2^{\omega_{\Delta}(n)} 
  r((1-n)m\Delta ) r(nm\Delta/N\wp)(v(nm/N)+1)\ell_{F,v}(\pi_{v}),   $$ 
where
 $$\omega_{\Delta}(n)=\#\{v\vert (\Delta, nm\Delta)\}.$$
\item If $v=\wp\vert \Delta$ is ramified in $E$, then
$$b_v'(m)=  \sum_{\substack{n\in Nm^{-1}\Delta^{-1}\\ (p,nm)=1 \\ \eps_v((n-1)n)=-1\\ \eps_w((n-1)n)=1 \ \forall v\neq w|\Delta \\ 0<n<1}}
 2^{\omega_{\Delta}(n)} 
 r((1-n)m\Delta) r(nm\Delta/N)(v(nm)+1)\ell_{F,v}(\pi_{v}).   $$
\item If $v$ is split in $E$, then
$$b'_{v}(m)=0.$$
\end{enumerate}
\end{prop} 
 \begin{proof}
 The vanishing of $\Phi(0)=\Phi_\one$ follows from the functional equation \eqref{feq2} and the sign assumption. 

By Proposition \ref{fourierPhi}, the Fourier coefficient $b_s(m)$ of $\Phi(s)=\Phi_{\nu^{s}}$ can be expressed as $b_{s}(m)=\sum_{n\in F} b_{n,s}(m)$ with
$$b_{n,s}(m)= \nu^{s}((1-n)m) r((1-n)m\Delta)\prod_{v\nmid p\infty} \sigma^n_{s,v}(m/N)$$
where, using Remark \ref{eps1}:
$$\sigma^n_{s,v}(m)=\begin{cases}
\dfrac{1-\eps(n m \wp )\nu( n m \wp)^{-2s} }{ 1-\eps(\wp)\nu(\wp)^{-2s}} &\textrm{ if } v=\wp\nmid\Delta;\\
1+\eps_v(n(n-1)) \nu( nm \wp)^{-2s} &\textrm{if }v=\wp\vert \Delta{\rm \ and \ } v(nm)=0;\\
\ \nu(nm\wp)^{-2s}  &\textrm{if }v=\wp\vert \Delta{\rm \ and\  } v(nm)=-1.
\end{cases}$$

Then $b'(m)=\sum_{n} b_n'(m) =\sum_{n}\sum_v b_{n,v}'(m)$ with $\sum_{n} b_{n,v}'(m)=b_v(m)$, and $b_n'(m)$ can be nonzero only if exactly one of the factors $\sigma^{n}_{s,v}$ vanishes at $s=0$. If this happens for the place $v_0$, then the set over which $n$ ranges accounts for the positivity and integrality conditions and the nonvanishing conditions at other places, 
whereas the condition $(p,nm)=1$ results from  observing that $\lim_{s\to 0} \nu^s(a)=\one[(p,a)=1]$.

The values of $b'_{n,v}$ can then be determined in each case from the above  expressions: for $v$ ramified this is straightforward. For $v=\wp$ inert, notice that if $v(nm/N)$ is odd then $r(nm\Delta/N\wp)=r((nm\Delta/N)^{(\wp)})$, where the superscript denotes prime-to-$\wp$ part; whereas if $v(nm/N)$ is even then $\sigma_{0,v}^n(m/N)$ does not vanish so $(n,v)$ does not contribute to $b'(m)$ and indeed $r(nm/N\wp)=0$.
\end{proof}

 \clearpage
  
 \part{Heights}
 
 \section{$p$-adic heights and Arakelov theory}

By the work of many authors (Schneider, Perrin-Riou, Mazur--Tate, Coleman--Gross, Zarhin, Nekov{\'a}{\v{r},\ldots)  there are  $p$-adic height pairings on the Mordell-Weil group of an abelian variety defined over  a number field. In this section, we first recall (\S\S\ref{sec:ls}-\ref{sec:height pairing}) a definition of the height pairing as a sum of local symbols following Zarhin \cite{zarhin} and Nekov{\'a}{\v{r} \cite{nekheights}, and explain how it  induces a pairing on degree zero divisors on curves. In \S\S\ref{sec:arakelov}-\ref{sec:arglobal} we explain how $p$-adic Arakelov theory allows to extend the height pairing for curves to a pairing on divisors of any degree.

 \subsection{Local symbols}\label{sec:ls}

Let $A$ be an abelian variety of dimension $g$ over a local field $E_{v}$, $A^{\vee}$ its dual abelian variety, and  and let $V=V_{p}A=T_{p}A\otimes_{\Z_{p}}\Q_{p}$ be the rational Tate module of $A$, a continuous $\Gal(\baar{E}/E)$-representation.\footnote{Nekov{\'a}{\v{r}} \cite{nekheights} defines height pairings for Galois representations in much greater generality than described here.}
  Let $\ell_{v}:E_{v}^{\times}\to \Q_{p}$ be a homomorphism; we call $\ell$ a \emph{local $p$-adic logarithm} and assume that it is ramified, that is, $\ell_{v}:E_{v}^{\times}\to\Q_{p}$ does not vanish identically on $\OO_{E,v}^{\times}$. Let $D_{\mathrm{dR}}(V_{v})$ be the filtered $\Q_{p}$-vector spaces attached to $V_{v}$ by the theory of Fontaine. The comparison theorem 
 identifies $D_{\rm dR}(V_{v})$ with  $H^{1}_{\mathrm{dR}}(A^{\vee}/E_{v})$, equipped with the Hodge filtration; it is also identified with the filtered Dieudonn\'e module of the special fibre of the $p$-divisible group of $A$ (after an extension of scalars if  $E_{v}$ is ramified over $\Q_{p}$; see \cite{fontaine}) .
Let $L$ be a finite extension of the coefficient field $\Q_{p}$, and if $v\vert p$, let  $W_{v}\subset D_{\mathrm{dR}}(V_{v})\otimes L$ be a splitting of the Hodge filtration, that is, a complementary subspace to $\Omega^{1}(A^{\vee}/E_{v})\otimes L\subset D_{\mathrm{dR}}(V_{v})\otimes L$, which is isotropic\footnote{The isotropy condition ensures that the resulting height pairing is symmetric \cite[Theorem 4.1.1 (4)]{nekheights}} for the cup product. When $V_{v}$ is ordinary, there is a canonical choice for $W_{v}$, the ``unit root'' subspace (see e.g. \cite{iovita} for a nice discussion).
 
We proceed to define pairings, called  \emph{local N\'eron symbols}\footnote{The notation is a bit abusive: the subscript $W$ is meant to recall that the local pairing depends on the choice of $W_{v}$ when $v\vert p$; when $v\nmid p$, it has no meaning. Although the symbol also depends on $\ell$, we will usually omit it from the notation.}
$$\langle\ ,\ \rangle_{v,W} \colon ( \mathcal{D}_{0}(A)(E_{v}) \times Z_{0}(A)^{0}(E_{v}))_{e}\to L$$
on the subset of pairs with disjoint supports in the product of the group $\mathcal{D}_{0}(A)(E_{v})$ of divisors algebraically equivalent to zero defined over $E_{v}$ and the group $Z_{0}(A)^{0}(E_{v})$ of zero-cycles of degree zero defined over $E_{v}$. 

Let $\calA/\OO_{E,v}$,  $\calA^{\vee}/\OO_{E,v}$ be the N\'eron models of $A$  and $A^{\vee}$, and let  $\calA^{0}$ be the identity component of $\calA$. The rational equivalence class $[D]$ of $D\in\mathcal{D}_{0}(A)(E_{v})$ defines a point in $A^{\vee}(E_{v})=\calA^{\vee}(\OO_{E,v})={\rm Ext}^{1}_{\rm \emph{ fppf}}(\calA^{0},{\bf G}_{m})$, hence an extension
$$1\to {\bf G}_{m}\to \mathcal{Y}_{[D]}\to\calA^{0}\to 1$$
of  abelian \emph{fppf} sheaves on $\OO_{E,v}$, and $\mathcal{Y}_{[D]}$ is represented by a smooth commutative group scheme. On the generic fibre, $\mathcal{Y}_{[D]}\otimes E_{v}$ can be identified with the complement $Y_{D}$ of the zero section in the total space of the line bundle $\OO(D)$ on $A$, and thus the extension admits a section 
$$s_{D}\colon A\setminus |D|\to Y_{D}$$
which is canonical up to scaling.

Suppose given a morphism $\ell_{v,D,W}$ which makes the following diagram commute:

\[
\xymatrix{
0\ar[r] &{\OO_{E,v}^{\times}\hat{\otimes} L} \ar[r] \ar[d]& \mathcal{Y}_{[D]}(\OO_{E,v})\hat{\otimes} L \ar[r] \ar[d]& \calA^{0}(\OO_{E,v})\hat{\otimes} L \ar@{=}[d]\ar[r] & 0\\
0\ar[r] &{E_{v}^{\times}\hat{\otimes} L} \ar[r] \ar[d]^{\ell_{v}}& {Y}_{D}({E_v})\hat{\otimes} L \ar[r] \ar[d]^{\ell_{v,D,W}} & A(E_{v})\hat{\otimes} L \ar[r] & 0\\
{}&L \ar@{=}[r] & L. & &
}
\]
Then we can define the local pairing by
\begin{gather}\label{localhtdef}\langle D, z\rangle_{v,W}=\ell_{v,D,W}(s_{D}(z)),\end{gather}
where $s_{D}$ is extended to the divisor $z$ in the obvious way. Notice that since $z$ has degree zero, this is well-defined independently of the scaling ambiguity in $s_{D}$. 
\medskip

When $v\nmid p$, the logarithm $\ell_{v}$ vanishes on $\OO_{E,v}^{\times}$ for topological reasons and we can uniquely extend it to an $\ell_{v,D}$ as in the above diagram by requiring its restriction to $\mathcal{Y}_{[D]}(\OO_{E,v})$ to be trivial.  When $v\vert p$, given the splitting $W_{v}$ one can construct a section 
$$s_{v,D,W}\colon A(E_{v})\hat{\otimes} L\to Y_{D}(E_{v})\hat{\otimes}L$$
and define the extension $\ell_{v,D,W}$ by requiring it to be trivial on the image of $s_{v,D,W}$.  The standard construction is explained e.g. in \cite[\S 3.2]{kobayashi2}. In the ordinary case, when $W_{v}$ is chosen to be the unit root subspace, the crucial properties of the (canonical) local symbol are the last two in Proposition \ref{theo:ht} below; in this case the construction rests on the following result (see \cite{schneider} or \cite[\S 6.9]{nekheights}).
\begin{lemm}\label{unorm} Let $E_{v,\infty}$ be a totally ramified $\Z_{p}$-extension of $E_{v}$, and denote by $E_{v, n}$ its $n^{\rm th}$ layer. Let $e\in \End(A)\otimes\baar{\Q}$ be an idempotent. Assume that $eV$ is ordinary as a Galois representation. Then the module  of \emph{universal norms}
$$U(eA(E_{v}))
=\bigcap_{n} {\rm Im}\left[{\rm Tr}_{E_{v,n}/E_{v}}\colon eA(E_{v,n})\to eA(E_{v})\right]$$
has finite index in $eA(E_{v})$.
\end{lemm}

\begin{prop}\label{theo:ht} The $p$-adic local symbol
$$\langle\ ,\ \rangle_{v}= \langle\ ,\ \rangle_{v,W} \colon ( \mathcal{D}_{0}(A)(E_{v}) \times Z_{0}(A)^{0}(E_{v}))_{e}\to L$$
defined by \eqref{localhtdef} has the following properties (valid whenever they make sense).
\begin{enumerate}
\item\label{bil} It is bilinear.
\item If $h\in E_{v}(A)$ is a rational function, we have
$$\langle (h), z\rangle_{v}=\ell_{v}(h(z))$$
where if $z=\sum n_{P}P$, $h(z)=\prod h(P)^{n_{P}}$.
\item If $\phi\colon A\to A$ is a finite endomorphism, we have
$$\langle \phi^{*}D, z\rangle_{v}=\langle D, \phi_{*}z \rangle_{v}.$$
\item\label{cont} For any $D\in \mathcal{D}_{0}(A)(E_{v})$, $x_{0}\in A(E_{v})\setminus |D|$ the map from $A(E_{v})\setminus |D|\to L$ defined by 
$$x\mapsto \langle D, x-x_{0}\rangle_{v}$$
is continuous.
\item\label{comp} (Compatibility.) Let $E_{w}'/E_{v}$ be a finite extension. If $D\in \mathcal{D}_{0}(A)(E'_{w})$, $z\in Z_{0}(A)^{0}(E_{v})$, we have
$$\langle {\rm Tr}_{E_{w}'/E_{v}}( D'), z\rangle_{v}=\langle D', z\rangle_{w}$$
where $\langle\ , \ \rangle_{w}$ is the local pairing associated with $\ell_{w}=\ell_{v}\circ N_{E_{w}/E_{v}}$ and (if $v\vert p$) the splitting $W_{w}$ is induced from $W_{v}$.
\item\label{boundedness} (Boundedness.)  If $v\vert p$, let $E_{v,\infty}^{\ell}=\cup_{n} E_{v,n}^{\ell}$ be the ramified\footnote{Recall that we choose $\ell_{w}$ to be ramified.}  $\Z_{p}$-extension of $E_{v}$ determined by the isomorphism 
$$E_{v}^{\times}\supset\Ker(\ell_{v})\cong \Gal(E_{v,\infty}^{\ell}/E)\subset \Gal(E_{v}^{\rm ab}/E).$$
induced from class field theory.  In the  ordinary situation of Lemma \ref{unorm}, if $eW_{v}$ is the unit root subspace of $eV$, there is a nonzero constant $c\in\Z_{p}$ 
such that
$$\langle D, z\rangle_{v,n}\in {c}^{-1}\ell_{w}(E_{v,n}^{\ell,\times})$$
if $D\in e  \mathcal{D}_{0}(A)(E_{v,n})$, $z\in eZ_{0}(A)^{0}(E_{v,n})$ and $\langle\ , \ \rangle_{v,n}$ is the local pairing associated with the extension $E_{v,n}^{\ell}/E_{v}$ as in  \ref{comp}.
\end{enumerate}
If $v\nmid p$, the local symbol is characterised by properties \ref{bil}--\ref{cont}.
\end{prop}
We refer to \cite[II.1]{nekovar}, \cite[\S 4.2]{kobayashi} and references therein for the proof and more details on the construction. See also Proposition \ref{ar} below.

\medskip

\subsection{The $p$-adic height pairing}\label{sec:height pairing} 

Let $A$ be an abelian variety over a number field $E$. Let $\ell\colon E^{\times}\bks E_{\A}^{\times}\to \Q_{p}$ be a a homomorphism (which we call a \emph{global $p$-adic logarithm}), whose restrictions $\ell_{v}=\ell|_{E_{v}^{\times}}$ are ramified for all $v\vert p$. Let  $W_{v}$ be Hodge splittings at the places $v\vert p$ as in \S\ref{sec:ls}. Then  we can define a height pairing 
$$\langle \ , \  \rangle\colon A^{\vee}(E)\times A(E)\to L$$ 
as the sum of local height pairings
$$\langle x, y\rangle=\sum_{v} \langle \tilde{x},\tilde{y}\rangle_{v},$$
where $\tilde{x}$ is a divisor on $A$ whose class in $A^{\vee}(E)\cong \Pic^{0}(A)$ is $x$, and $\tilde{y}=\sum n_{P}[P]$ is a zero-cycle of degree zero on $A$ with support disjoint from the support of $\tilde{x}$, which satisfies $\sum n_{P}P=y$.  The result is independent of the choices of $\tilde{x}$ and $\tilde{y}$.

\medskip

Let $X$ be a (proper, smooth) curve over $E$ of genus $g\geq 1$, together with a degree~$1$ divisor class defined over $E$ inducing an embedding 
$$\iota\colon X\hookrightarrow J(X)$$
into its Albanese variety $J(X)$.\footnote{In our applications, we only have a rational divisor class, inducing a compatible system of maps $\iota_{E'}\colon X(E')\otimes \Q \to J(X)(E')\otimes\Q$ for $E'$ a finite extension of $E$, such that for some integer $n$, $(n\iota_{E'})_{E'}$ is induced from an $E$-morphism. This causes no extra difficulties.} Let  $\mathrm{Div}(X)$ be the group of divisors on $X$, $\mathrm{Div}^{0}(X)$ the subgroup of degree zero divisors, and similarly $\CH(X)=\mathrm{Div}(X)/\sim$, $\CH(X)_{0}=\mathrm{Div}^{0}(X)/\sim$ the Chow group of zero-cycles modulo rational equivalence and its subgroup of degree zero elements.  Then, given a $p$-adic logarithm and Hodge splittings for $V_{p}J(X)$, we can define local and global pairings on degree zero divisors on $X$ (denoted with a subscript $X$) from the above pairings on $J(X)$ (here denoted with a subscript $J(X)$). Let $D_{1}$, $D_{2}$ be divisors  of degree zero on $X$ defined over $E$ and with disjoint support.  The morphism $\iota$ induces an isomorphism $\iota^{*}\colon \Pic^{0}J(X)\cong \Pic^{0}(X)$, hence we can pick an algebraically trivial divisor $D_{1}'$ on $J(X)$ satisfying $D_{1}=\iota^{*}D_{1}'+(h)$ for some rational function $h\in E(X)$. If $D_{1}'$ is chosen so that its support is disjoint from the support of $\iota_{*}D_{2}$ and the support of $(h)$ is disjoint from the support of $D_{2}$, we can define 
$$\langle D_{1},D_{2}\rangle_{v,X}=-\langle D_{1}', \iota_{*}D_{2}\rangle_{v,J(X)}-\ell_{v}(h(D_{2})),$$
and 
$$\langle D_{1},D_{2}\rangle_{X}=\sum_{v}\langle D_{1},D_{2}\rangle_{v,X}.$$

The latter pairing descends to a \textbf{height pairing} on divisor classes
$$\langle \ ,\ \rangle\colon \CH(X)_{0}\times \CH(X)_{0}\to L.$$
There are various conventions in the literature for the normalisation of the signs of height pairings. Our choices are the same as those of Kobayashi \cite[\S 4.3]{kobayashi}, whose discussion we have followed and to which we refer for a comparison with other authors' choices.

\subsection{$p$-adic Arakelov theory -- local aspects}\label{sec:arakelov}
 
Here and in \S\ref{sec:arglobal} we summarise the main results of Besser \cite{besser}, who develops the $p$-adic analogue of classical Arakelov theory.

\subsubsection{Metrised line bundles} 
Let $X_{v}$ be a proper smooth variety over the finite extension  $E_{v}$ of $\Q_{p}$, and fix a ramified local $p$-adic logarithm $\ell_{v}:E_{v}^{\times}\to \Q_{p}$ which we extend to $\baar{\Q}_{p}^{\times}$ by $\ell_{v}|_{E_{v}'^{\times}}=\ell_{v}\circ N_{E_{v}'/E_{v}}$ for any finite extension $E_{v}'/E_{v}$.

A \emph{metrised line bundle} $\hatL=(\calL, \log_{\calL})$ on $X_{v}$ is a line bundle on $X_{v}$ together with a choice of  a \emph{log function} $\log_{\calL}$ on the total space of $\calL$ minus the zero section (which will also be viewed as a function on the nonzero sections of $\calL$). A log function is the analogue in the $p$-adic theory of the logarithm of a metric on the sections of a line bundle on a Riemann surface. It is a Coleman function having a certain analytic property\footnote{For which we refer to \cite[Definition 4.1]{besser}.} and the following algebraic property. If the $p$-adic logarithm $\ell_{v}$ factors as 
\begin{gather}\label{tv}
\ell_{v}=t_{v}\circ\log_{v}
\end{gather}
 for some $\log_{v}\colon E_{v}^{\times} \to E_{v}$ and some $\Q_{p}$-linear $t_{v}\colon E_{v}\to \Q_{p}$, then for any nonzero section $s$ of $\calL_{v}$ and rational function $f\in E(X_{v})$ we have
\begin{gather}\label{logv} 
 \log_{\calL,v}(f s)=\log_{v}(f)+\log_{\calL,v}(s).
\end{gather}
Adding a constant to a $\log$ function produces a new log function; this operation is called \emph{scaling}.

One can define a notion of $\baar{\partial}\partial$-operator on Coleman functions, and attach to any log function $\log_{\calL}$ on $\calL$ its \emph{curvature} $\baar{\partial}\partial\log_{\calL}\in H^{1}_{\rm dR}(X_{v})\otimes \Omega^{1}(X_{v})$; its cup product is the first Chern class of $\calL$.

Log functions on a pair of line bundles induce in the obvious way a log function on their tensor product, and similarly for the dual of a line bundle. If $\pi:X_{v}\to Y_{v}$ is  a morphism, then a log function on a line bundle  on $Y_{v}$ induces in the obvious way a log function on the pullback line bundle on $X_{v}$. If moreover $\pi$ is a finite Galois cover with Galois group $G$, and $\mathcal{L}$ is a line bundle on $X_{v}$ with log function $\log_{\mathcal{L}}$ and associated curvature $\beta$, then the norm line bundle $N_{\pi}\mathcal{L}$ on $Y_{v}$ with stalks 
$$(N_{\pi}\calL)_{y}=\underset{x\mapsto y}{\bigotimes}\calL_{x}^{\otimes e(x|y)}$$
has an obvious candidate log function $N_{\pi}\log_{\calL}$ obtained by tensor product. A delicate point is that it is not automatic that the latter is a genuine log function (i.e. it satisfies the analytic property alluded to above); cf. 
 \cite[Proposition 4.8]{besser} for a sufficient condition.

\subsubsection{The canonical Green function} Now let $X_{v}/E_{v}$ be a curve of genus $g\geq 1$ with good reduction above $p$. Choose a splitting $W_{v}\subset H^{1}_{\rm dR}(X_{v})\otimes L$ of the Hodge filtration as  in \S\ref{sec:ls}, which we use to identify $W_{v}\cong \Omega^{1}(X_{v})^{\vee}$; we then  define a canonical  element
$$\mu_{X_{v}}={1\over g}\id\in \End \Omega^{1}(X_{v})\cong W_{v}\otimes\Omega^{1}(X_{v})$$
and similarly for the self-product $X_{v}\times X_{v}$ (denoting $\pi_{1}$, $\pi_{2}$ the projections)
$$\Phi =\twomat {1\over g} {-1}{-1}{1\over g}\in  \End (\pi_{1}^{*}\Omega^{1}(X_{v})\oplus\pi_{2}^{*}\Omega^{1}(X_{v}))\hookrightarrow H^{1}_{\rm dR}(X_{v}\times X_{v})\otimes \Omega^{1}(X_{v}\otimes X_{v}).$$
The first Chern class of $\Phi$ is the class of the diagonal $\Delta\subset X_{v}\times X_{v}$.

 Let  $s_{\Delta}$ denote the canonical section of the line bundle $\OO(\Delta)$ on $X_{v}\times X_{v}$. Given any log function $\log_{\OO(\Delta)}$ on $\OO(\Delta)$ with curvature $\Phi$, we can consider the function $G$ on $X_{v}\times X_{v}$ given by
$$G(P,Q)=\log_{\OO({\Delta})}(s_{\Delta})(P,Q).$$
It is a Coleman function with singularities along $\Delta$; we call $G$ a \emph{Green function} for~$X_{v}$.

A Green function $G$ induces a log function on any line bundle $\OO(D)$ on $X_{v}$ by 
$$\log_{\OO(D)}(s_{D})(Q)=\sum n_{i} G(P_{i},Q)$$
if $D=\sum n_{i}P_{i}$ and $s_{D}$ is the canonical section of $\OO(D)$. A log function  $\log_{\calL}$ on the line bundle $\calL$ and the resulting metrised line bundle $(\calL, \log_{\calL})$ are called \emph{admissible} with respect to $G$ if for one (equivalently, any) nonzero rational section $s$ of $\calL$,  the difference  $\log_{\calL}(s)-\log_{{\rm div}(s)}$ is a constant. Such a constant is denoted by $\iota_{\log_{\calL}}(s)$, or $\iota_{\log_{v}}(s)$ in the case of the trivial line bundle with the log function $\log_{v}$. It is the analogue of the integral of the norm of $s$. It follows easily from the definitions that any isomorphism of admissible metrised line bundles is an isometry up to scaling. 

Let $\omega_{X_{v}}$ be the canonical sheaf on $X_{v}$. The  canonical isomorphism $\omega_{X_{v}}\cong \Delta^{*}\OO(-\Delta)$ gives another way to induce from $G$ a log function $\log_{\omega_{X_{v}}}^{G}$ on $\omega_{X_{v}}$, namely by pullback (and the resulting metrised line bundle has curvature  $(2g-2)\mu_{X_{v}}$). The requirement that this log function be admissible, together with a symmetry condition, leads to an almost unique choice of~$G$.

\begin{prop}[{\cite[Theorem  5.10]{besser}}]\label{green} There exists a unique up to constant symmetric Green function $G$ with associated curvature $\Phi$ such that $(\omega_{X_{v}}, \log_{\omega_{X_{v}}}^{G})$ is an admissible metrised line bundle with respect to $G$.
\end{prop}
In the following we will arbitrarily fix the constant implied by the Proposition. In our context, the canonical Green function thus determined is, in a suitable sense, defined over $E_{v}$ \cite[Proposition 8.1]{besser}.

\subsection{$p$-adic Arakelov theory -- global aspects}\label{sec:arglobal} Let $E$ be a number field with ring of integers $\OO_{E}$. Let $\X/\OO_{E}$ be an arithmetic surface with generic fibre $X$, that is, $\X\to\OO_{E}$ is a proper regular relative curve and $\X\otimes_{\OO_{E}}E=X$. We assume that $\X$ has \emph{good reduction} at all place $v\vert p$, and denote $X_{v}=\X\otimes E_{v}$. Fix choices of a ramified\ $p$-adic logarithm $\ell$ and Hodge splittings $W_{v}$ as in \S\ref{sec:arakelov}.

\subsubsection{Arakelov line bundles and divisors} An \emph{Arakelov line bundle} on $\X$ is a pair
 $$\widehat{\calL}=(\calL,(\log_{\calL_{v}})_{v\vert p})$$
  consisting of a line bundle $\calL$ on $\X$ together with admissible (with respect to the Green functions of Proposition \ref{green}) log functions $\log_{\calL_{v}}$ on $\calL_{v}=\calL|_{X_{v}}$. We denote by ${\rm Pic}^{\Ar}(\X)$ the group of isometry classes of Arakelov line bundles on $\X$.

The group $\Div^{\Ar}(\X)$ of \emph{Arakelov divisors} on $\X$ is the group of formal combinations
$$D=D_{\rm fin}+D_{\infty}$$
where $D_{\rm fin}$ is a divisor on $\X$ and $D_{\infty}=\sum_{v\vert p}\lambda_{v}X_{v}$ is a sum with coefficients $\lambda_{v}\in E_{v}$ of formal symbols $X_{v}$ for each place $v\vert p$ of $E$. To an Arakelov line bundle  $\hatL$ and a nonzero rational section $s$ of $\calL$ we associate the Arakelov divisor
$$\widehat{\rm div}(s)=(s)_{\rm fin}+(s)_{\infty}$$
where $(s)_{\rm fin}$ is the usual divisor of $s$ and $(s)_{\infty}= \sum_{v\vert p}\iota_{\log_{\calL_{v}}}(s_{v})X_{v}$. The group ${\rm Prin}^{\Ar}(\X)$ of \emph{principal} Arakelov divisors on $\X$ is the group generated by the $\widehat{{\rm div}}(h)$ for $h\in E(\X)^{\times}$. The Arakelov Chow group of $\X$ is 
$$\CH^{\Ar}(\X)= {\Div}^{\Ar}(\X)/{\rm Prin}^{\Ar}(\X),$$
and we have an isomorphism
$${\rm Pic}^{\Ar}(\X)\cong \CH^{\Ar}(\X)$$
given by  $\hatL\to [\widehat{\rm div}(s)]$ for any rational section $s$ of $\calL$.

\subsubsection{The $p$-adic Arakelov pairing} Most important for us is the existence of a pairing on $\CH^{\Ar}(\X)$, extending the $p$-adic height pairing of divisors of \S\ref{sec:height pairing}. Let $(\ , \ )_{v}$ denote the ($\Z$-valued) intersection pairing of cycles on $\X_{v}$ with disjoint support.
 \begin{prop}[Besser \cite{besser}]\label{ar} Let $\X/\OO_{E}$ be an arithmetic surface with good reduction above~$p$. For any choice of ramified $p$-adic logarithm $\ell\colon E^\times_\A/E^\times \to\Q_p$  and Hodge splittings $(W_{v})_{v\vert p}$  as above, there is a symmetric bilinear paring\footnote{The notation of \cite{besser} is $D_1\cdot D_2$ for $\langle D_1, D_2\rangle^\Ar$.}
 $$\langle\ , \ \rangle^\Ar\colon  \CH^\Ar(\X)\times \CH^\Ar(\X)\to L$$
 satisfying:
 \begin{enumerate}
\item\label{arakelov}  If $D_1$ and  $D_2$ are finite and of degree zero on the generic fibre, and one of them has degree zero on each special fibre of $\X$, then
$$\langle D_1,D_2\rangle^\Ar= \langle D_{1,E}, D_{2,E}\rangle,$$
where $D_{i,E}\in\mathrm{Div}^0(X)$ is the generic fibre of $D_i$ and $\langle\ , \ \rangle$ denotes the height pairing of Proposition \ref{theo:ht} associated with the same choices of $\ell$ and $W_{v}$.
\item\label{arakelovfin}  If  $D_{1,\fin}$, $D_{2,\fin}$ have disjoint supports on the generic fibre, then
$$\langle D_1, D_2\rangle^\Ar=\sum_v \langle D_1,D_2 \rangle^\Ar_v$$
where the sum runs over all finite places of $E$, and the local Arakelov pairings are defined by
$$\langle D_1,D_2 \rangle^\Ar_v= (D_1,D_2)_v \ell_v(\pi_v)$$
for $v\nmid p$ and below for $v\vert p$.

If moreover we are in the situation of \ref{arakelov}, then for each place $v$ we have 
$$\langle D_1,D_2\rangle^\Ar_v= \langle D_{1,E}, D_{2,E}\rangle_v.$$
\item\label{arakelovdiv} In the situation of \ref{arakelovfin}, if moreover $D_1=\widehat{\mathrm{div}}(h)$ is the Arakelov divisor of a rational function $h$, then 
$$\langle D_1,D_2\rangle_v^\Ar=\ell_v(h(D_{2,\fin}))$$
for all places $v$.
\end{enumerate}
 \end{prop}
 
For completeness, we give the description of the local pairing at $v\vert p$ of divisors with disjoint supports. If $\ell_{v}=t_{v}\circ\log_{v}$ as in \eqref{tv} and $G_{v}$ is the Green function on $X_{v}\times X_{v}$, we have $\langle  D,X_{w} \rangle_{v}^{\Ar}=0$ if $v\neq w$, $\langle X_{v},X_{v}\rangle_{v}^{\Ar}=0$, $\langle D,\lambda_{v} X_{v}\rangle_{v}^{\Ar}=(\deg D_{E})t_{v}(\lambda_{v})$ and if $D_{1}$, $D_{2}$ are finite divisors with images $D_{1,v}=\sum n_{i}P_{i}$, $D_{2,v}=\sum m_{j}Q_{j}$ in $X_{v}$ then
$$\langle D_{1}, D_{2}\rangle^{\Ar}_{v}=\sum_{i,j} n_{i}m_{j}t_{v}(G_{v}(P_{i},Q_{j})).$$

In fact, in \cite{besser} it is proved directly that the global Arakelov pairing and its local components at $p$ coincide with the global and local height pairings of Coleman--Gross \cite{CG}. The latter coincide with the Zarhin--Nekov{\'a}{\v{r}} pairings by \cite{besserCG}.

 \section{Heegner points on Shimura curves}
 
In this section we describe our Shimura curve and construct Heegner points on it, following \cite[\S\S1-2]{shouwu}, to which we refer for the details (see also \cite[\S 5]{asian}, and \cite{carayol} for the original source of many results on Shimura curves). We go back to our usual notation, so $F$ is a totally real number field of degree $g$, $N$ is an ideal of $\OO_F$, $E$ is a CM extension of $F$ of discriminant $\Delta$ coprime to $2Np$, and $\eps$ is its associated Hecke character. 

 \subsection{Shimura curves}\label{shimuracurves}
 Let $B$ be a quaternion algebra over $F$ which is ramified at all but one infinite place. Then we can choose an isomorphism $B\otimes\R\cong M_2(\R)\oplus \mathbf{H}^{g-1}$, where $\mathbf{H}$ is the division algebra of Hamilton quaternions.
There is an  action of $B^\times$ on $\frakh^\pm=\C-\R$ by M\"obius transformations via the map $B^\times\to\mathbf{GL}_2(\R)$ induced from the above isomorphism. For each   open subgroup $K$ of $\widehat{B}^\times=(B\otimes_F\widehat{F})^\times$ which is compact modulo $\widehat{F}^{\times}$ we  then have a  \textbf{Shimura curve}
$$M_K(\C)=B^\times\bks \frakh^\pm\times \widehat{B}^\times/K,$$
where $\frakh^{\pm}=\C\setminus\R$.
Unlike modular curves, the curves $M_{K}$ do not have a natural moduli interpretation. However, by the work of Carayol \cite{carayol}, $M_{K}(\C)$ has a finite map\footnote{Which is an embedding if $K\supset \widehat{F}^{\times}$.} to another (unitary) Shimura curve $M_{K'}'(\C)$ which, if the level $K'$ is small enough, has an interpretation as the moduli space of certain quaternionic abelian varieties. Namely, $M'_{K'}$ parametrises isomorphism classes of abelian varieties of dimension $4[F:\Q]$ with multiplication by the ring of integers $\OO_{B'}$ of $B\otimes_{F}F'$ and some extra structure (a polarisation and a $K'$-level structure, compatible with the quaternionic multiplication) \cite[Proposition 1.1.5]{shouwu}. 

We will usually denote a point of $M'_{K'}$ simply by $[A]$ where $A$ is the underlying abelian variety.   If $K'$ has maximal components at places dividing $m$, one can define a notion of  an \emph{admissible submodule} $D$ of level $m$ (see \cite[\S 1.4.3]{shouwu}): it is an $\OO_{B'}$-submodules of $A[m]$ satisfying a certain condition, which ensures that the quotient $A/D$ can be naturally endowed with the extra structure required by the functor $M'_{K'}$.  We denote  by $[A_{D}]$ the object whose underlying abelian variety is $A/D$, with the induced extra structure. 

As a consequence of the moduli interpretation, the curve $M_K(\C)$ has a canonical model $M_K$ defined over $F$ (it is connected but not, in general, geometrically connected), and a proper regular integral model\footnote{In the modular curve case $F=\Q$, $\eps(v)=1$ for all $v\vert N$, $M_{K}$ and $\mathcal{M}_{K}$ are proper only after the addition of finitely many cusps. (We caution the reader that Carayol \cite{carayol} uses the notation $\calM_K$  to denote instead the set of geometrically connected components of $M_K$.)} $\mathcal{M}_K$ over $\OO_F$; if $v$ is a finite place where $B$ is split, then $\mathcal{M}_K $ is smooth over $\OO_{F,v}$ if $K_v$ is a maximal compact subgroup of $B_v$ and $K^v$ is sufficiently small.  We denote  $\mathcal{M}_{K,v}=\mathcal{M}_{K}\otimes\OO_{F,v}$.

\subsubsection{Universal formal group and ordinary points} Assume that the level structure $K$ is maximal at $\wp$.  The curve $\mathcal{M}_{K,\wp}$ carries a universal $\wp$-divisible $\OO_{B,\wp}$-module $\mathcal{G}$ obtained from the $\wp$-divisible group $\mathcal{A}[\wp^{\infty}]$ of the universal abelian scheme $\mathcal{A}$ over $\mathcal{M}'_{K',\wp}$. More precisely, choosing an auxiliary quadratic field $F'$ which is split at $\wp$ and an isomorphism $j\colon \OO_{F',\wp}\cong \OO_{F,\wp}\oplus \OO_{F,\wp}$,  we have $$\mathcal{G}=\mathcal{A}[\wp^{\infty}]^{(2)}=e_{2}\mathcal{A}[\wp^{\infty}],$$ where $e_{2}$ is the idempotent in $\OO_{F',\wp}$ corresponding to $(0,1)$ under $j$.

Assume that $B$ is split at $\wp$. Then we denote by $\mathcal{G}^{1}$, $\mathcal{G}^{2}$ the images under the projectors corresponding to $\smalltwomat 1{}{}{0}$ and $\smalltwomat 0{}{}1$ under a fixed isomorphism $B\cong M_{2}(F_{\wp})$; they are isomorphic via the element $\smalltwomat{}1{-1}{}$. 

Let $x$ be a geometric point of the special fibre $\mathcal{M}_{K,\wp}$. Then the $\mathcal{G}^{i}_{x}$ are  divisible  $\OO_{\wp}$-modules of dimension $1$ and height $2$, hence isomorphic to either of:
\begin{itemize}
\item the direct sum $\Sigma_{1}\oplus F_{\wp}/\OO_{F,\wp}$, where $\Sigma_{1}$ is the unique formal $\OO_{F,\wp}$-module of height $1$ -- in this case $x$ is called \emph{ordinary};
\item the unique formal $\OO_{\wp}$-module of dimension~$1$ and height~$2$ -- in this case $x$ is called \emph{supersingular}.
\end{itemize}

Let $M_{K}(\baar{\Q}_{\wp})^{\rm ord}\subset M_{K}(\baar{\Q}_{\wp})$ be the set of points with ordinary reduction. Then the Frobenius map $\mathrm{Frob}_{\wp}$ admits a lift 
\begin{gather}\label{froblift} \varphi\colon M_{K}(\baar{F}_{\wp})^{\rm ord}\to M_{K}(\baar{F}_{\wp})^{\rm ord}\end{gather}
given in the moduli interpretation by $[A]\mapsto [A_{{\rm can}(A)}]$, where ${\rm can}(A)$ is the \emph{canonical submodule} of $A$, that is the sub-$\OO_{F,\wp}$-module of $A[\wp]$ in the kernel of multiplication by $\wp$ in the formal group of $A$.

\subsubsection{The order $R$ and the curve $X$}   Assume that $\eps(N)=(-1)^{g-1}$. Then the  quaternion algebra $\mathbf{B}$ over $\A_F$ ramified exactly at all the infinite places and the finite places $v\vert N$ such that $\eps(v)=-1$ is \emph{incoherent}, that is, it does not arise via extension of scalars from a quaternion algebra over $F$. On the other hand,  for any embedding  $\tau\colon F\hookrightarrow\R$, there is a \emph{nearby} quaternion algebra $B(\tau)$ defined over $F$ and ramified at $\tau$ and the places where $\mathbf{B}$ is ramified. Fix any embedding $\rho\colon E\to B(\tau)$, and let $R$ be an order of $\widehat{B}=\widehat{B}(\tau)$ which contains $\rho(\OO_E)$ and has discriminant $N$ (this is constructed in \cite[\S 1.5.1]{shouwu}). Then the curve $X$ over $F$  of interest to us is the (compactification of) the curve $M_K$ defined above for  the subgroup $K=\widehat{F}^\times\widehat{R}^\times\subset \widehat{B}$; that is, for each embedding $\tau\colon  F\to \C$, we have 
\begin{equation}\label{X}
X(\C)=B(\tau)^\times\bks \frakh^\pm\times \widehat{B}^\times/\widehat{F}^\times\widehat{R}^\times \cup\mathrm{\{cusps\}}.
\end{equation}
The finite set of cusps is nonempty only in the classical case where $F=\Q$, $\eps(v)=1$ for all $v\vert N$  so that $X=X_0(N)$. In what follows we will not burden the notation with the details of this particular case, which poses no additional difficulties and  is already treated in the original work of  Perrin-Riou \cite{PR}.

We denote by $\X$ the canonical model of $X$ over $\OO_{F}$, and by $\X_{v}$ its base change to $\OO_{F,v}$. We also denote by $J(X)$ the Albanese variety of $X$ and by $\mathcal{J}_{v}$ its N\'eron model over $\OO_{F,v}$.

\subsubsection{Hecke correspondences}  
Let $m$ be an ideal of $\OO_{F}$ which is coprime to the ramification set of $B$. Let $\gamma_{m}\in \widehat{\OO}_{B}$  be an element with components $1$  away from $m$ and such that $\det \gamma_{m}$ generates $m$ at the places dividing $m$. Then the Hecke operator $T(m)$ on $X$ is defined by 
$$T(m)[(z,g)]=\sum_{\gamma\in K \gamma_{m}K/K}[(z, g\gamma)]$$
under the complex description \eqref{X}. When $m$ divides $N$ we often denote the operator $T(m)$ by $U(m)$ or $U_{m}$.

Let ${\mathbf T}_{N}'$ be the algebra generated by the $T(m)$ for $m$ prime to $N$. Then by \cite[Theorem 3.2.1]{shouwu}, the algebra 
${\mathbf T}_{N}'$ is a quotient of the Hecke algebra on Hilbert modular forms ${\mathbf T}_{N}$ (hence the names $T(m)$ are justified). It acts by correspondences on $X\times X$, and taking Zariski closures of cycles on $\X\times \X$ extends the action to $\X$.

As in the classical case, the Hecke operators $T(m)$ admit a moduli interpretation, after base change to a suitable quadratic extension $F'$ and passing to the curve $X'$. Namely we have
$$T(m)[A]=\sum_{D}[A_{D}],$$
where the sum runs over the {admissible submodules} of $A$ of level $m$.

\subsection{Heegner points}\label{sec:heegner}
The curve $X$ defined above has a distinguished collection of  points defined over abelian extensions of $E$: we briefly describe it, referring the reader to \cite[\S 2]{shouwu}  for more details. 

  A point $y$ of $X$ is called a \textbf{CM point} with multiplication by $E$ if it can be  represented by $({x_0}, g)\in\frakh^+\times\widehat{B}^\times$ via \eqref{X}, where $x_{0}\in \mathfrak{H}^{+}$ is the unique point fixed by $E^{\times}$.
  The order 
$$\End(y)=g\widehat{R}g^{-1}\cap \rho(E)$$
in $E=\rho(E)$ is defined independently of the choice of $g$, and  $$\End(y)=\OO_E[c]=\OO_F+c\OO_E$$ for a unique ideal $c$ of $\OO_F$ called the \textbf{conductor} of $y$.  We say that the point $y=[({x_0},g)]$ has the \emph{positive orientation} if for every finite place $v$ the morphism $t\mapsto g^{-1}\rho(t)g$ is $R_v^\times$-conjugate to $\rho$ in $\Hom(\OO_{E,v},R_v)/R_v^\times$.\footnote{This set has two elements only if $v\vert N$ (the other element is called the negative orientation at $v$); otherwise it has one element and the condition at $v$ is empty. There is a group of Atkin-Lehner involutions acting transitively on orientation classes.} Let $Y_c$  be the  set of positively oriented CM points of conductor $c$. By the work of Shimura and Taniyama, it is a finite  subscheme of $X$ defined over $E$, and the action of $\Gal(\baar{\Q}/E)$ is given by 
$$\sigma([({x_0},g)])=[({x_0},\mathrm{rec}_E(\sigma)g)],$$
where $\mathrm{rec}_E\colon \Gal(\baar{E}/E)\to\Gal(\baar{E}/E)^\mathrm{ab}\wtil{\to} \baar{E^\times}\bks \widehat{E}^\times$ is the reciprocity map of class field theory. If $y=[({x_0},g)]$ has conductor $c$, then the action factors through
$$\Gal(H[c]/E)\cong  E^\times\bks \widehat{E}^\times/\widehat{F}^\times\widehat{\OO}_E[c]^\times$$
where $H[c]$ is the ring class field of $E$ of conductor~$c$; the action of this group on $Y_c$ is simply transitive.

For each nonzero ideal $c$ of $\OO_F$, let $u(c)=[\OO_E[c]^\times :\OO_F^\times]$ and define the divisor 
\begin{gather}\label{eta} \eta_c=u(c)^{-1}\sum_{y\in Y_c} y.\end{gather}
Let $\eta=\eta_1$. By the above description of the Galois action on CM points , each divisor $\eta_c$ is defined over $E$.

A \textbf{Heegner point} $y\in X(H)$ is a positively oriented CM point with conductor~$1$.   We can use the embedding $\iota\colon X\to J(X)\otimes\Q$ to define the point 
$$[z]=\iota(\eta)=[\eta]-h[\xi]\in J(X)(E)\otimes\Q$$
where $h$ is a number such that $[z]$ has degree zero in each geometrically connected component of $X$, and $[\xi]$ is the Hodge class of the Introduction (see below for more on the Hodge class).

 \subsubsection{Arakelov Heegner divisors} 
The Heegner divisor on $X$ can be refined to an Arakelov divisor $\hat{z}$ having degree zero on each irreducible component of each special fibre.  On a suitable Shimura curve $\wtil{X}\overset{\pi}{\to} X$ of deeper level away from $N\Delta_{E/F}$, we can give an explicit description of the pullback $\hat{\wtil{z}}$ of $\hat{z}$ and of the Hodge class as follows. 

As outlined in in \S\ref{shimuracurves}, after base change to a suitable quadratic extension $F'$ of~$F$, we have an embedding $\wtil{X}\hookrightarrow  \wtil{X}'$ of $\wtil{X}=M_{\wtil{K}}$ into the unitary Shimura curve $\wtil{X}'=M_{\wtil{K'}}'$ parametrising  abelian varieties of dimension $4g$ with multiplication by $\OO_{B'}$ and some extra structure. Then  by the Kodaira--Spencer map, we have an isomorphism $\omega_{\wtil{X}'}\cong \det \Lie \calA^{\vee}|_{\wtil{X}'}$, where $\calA\to \wtil{X}'$ is the universal abelian scheme and the determinant is that of an $\OO_{F'}$-module of rank~$4$ (the structure of $\OO_{F'}$-module coming from the multiplication by $\OO_{B'}$ on $\calA$). This gives a way\footnote{See \cite[\S4.1.3, \S1]{shouwu} for more details on this construction.} of extending the line bundle $\omega_{\wtil{X}'}$ to the integral model $\wtil{\X}'$ and to a line bundle $\calL$ on $\wtil{\X}$.  For each finite place $v\vert p$ we endow $\calL|_{\wtil{X}_{v}}$ with the canonical log functions $\log_{\calL,v}$ coming from the description $\calL|_{\wtil{X}_{v}}=\omega_{\wtil{X}_{v}}$ and a fixed choice of Hodge splittings on $\wtil{X}$. We define $[\hat{\wtil{\xi}}]\in \CH^{\Ar}(\wtil{\X})\otimes \Q$ to be the class of $(\calL, (\log_{\calL})_{v\vert p})$ divided by its degree, $[\wtil{\xi}]$ to be its finite part, and $\hat{\wtil{\xi}}$ to be any Arakelov divisor in its class. 

Then the Arakelov Heegner  divisor $\hat{\wtil{z}}\in \Div^{\Ar}(\X\otimes \OO_{E})$ is described by 
\begin{align}\label{zhat}
\hat{\wtil{z}}=\hat{\wtil{\eta}}-h\hat{\wtil{\xi}}+Z,
\end{align}
where $\hat{\wtil{\eta}}$ is the Zariski closure in $\X\otimes \OO_{E}$ of the pullback of $\eta$ to $\wtil{X}$, and $Z$ is a finite vertical divisor uniquely determined by the requirement that $\hat{\wtil{z}}$ should have degree zero on each irreducible component of each special fibre.

\subsection{Hecke action on Heegner points}\label{sec:heckeheegner} Recall from \S\ref{sec:calS}  the spaces of Fourier coefficients $\mathcal{D_N}\subset{\calS}$, the arithmetic functions $\sigma_1, r\in\mathcal{D}_N$, and the space $\bcalS=\calS/\mathcal{D}_N$. The action of Hecke operators on the  Arakelov Heegner divisor  is described as follows. 

\begin{prop}\label{heckeheegner} Let $m$ be an ideal of $\OO_F$ coprime to $N$. We have
\begin{enumerate}
\item $T(m)\eta=\sum_{c|m} r(m/c)\eta_{c}$.
\item Let  $\eta^0_c=\sum_{\OO_F\neq d|c}\eta_d$, and let $T^0(m)\eta=\sum_{c|m}\eps(c)\eta^0_{m/c}$. Then $\eta$ and $T^0(m)\eta$ have disjoint support and if $m$ is prime to $N\Delta$ then $T(m)\eta=T^0(m)\eta+r(m)\eta$.
\item\label{heckexi} $T(m)[\xi]=\sigma_1(m)[\xi]$ and  $m\mapsto T(m)\hat{\wtil{\xi}}$ is  zero in  $\baar{\calS}\otimes \Div^\Ar(\wtil{\X})$.
\item The arithmetic function $m\mapsto T(m)Z$ is zero in $\baar{\calS}\otimes \Div^\Ar(\X)$.
\end{enumerate}
\end{prop}         
\begin{proof} Parts 1, 2 and 4 are proved in \cite[\S 4]{shouwu}. For part 3, we switch to the curve $\wtil{X}$. By definition  $[\hat{\wtil{\xi}}]$ is  a multiple of the class of the Arekelov line bundle $\calL=\det\Lie \calA^{\vee}$ on $\wtil{\X}$ with the canonical log functions on $\calL_{v}\cong\omega_{\wtil{X}_{v}}$, where $\calA\to\X$ is the universal abelian scheme. We view $T(m)$ as a finite  algebraic correspondence of degree $\sigma_{1}(m)$ induced by the subscheme $\wtil{\X}_{m}\subset \wtil{\X} \times \wtil{\X}$ of pairs $(A,A/D)$ where $D$ is an admissible submodule of $A$ of level $m$. If $p_{1}, p_{2}\colon\wtil{\X}_{m}\to\wtil{\X}$ are the two projections, then we have
$$T(m)\calL=N_{p_{1}}p_{2}^{*}\calL,$$
and the log functions $\log_{T(m)\calL_{v}}$ on $T(m)\calL|_{\wtil{X}_{v}}$ are the ones induced by this description. (That these are genuine log functions -- cf. the caveat in \S\ref{sec:arakelov} -- will be shown in the course of proving Proposition \ref{heckeheegner}.\ref{heckexi} 
below.)
 
Let $\pi\colon \calA_{1}\to \calA_{2}$ be the universal isogeny over $\wtil{\X}_{m}$. As $p_{i}^{*}\calL=\det\Lie\calA_{i}^{\vee}$, we have an induced map 
$$\psi_{m}=N_{p_{1}}\pi^{*}\colon T(m)\calL\to N_{p_{1}}p_{1}^{*}\calL=\calL^{\sigma_{1}(m)},$$
and \cite[\S 4.3]{shouwu} shows that $\psi_{m}(T(m)\calL)=c_{m}\calL^{\sigma_{1}(m)}$ where $c_{m}\subset \OO_{F}$ is an ideal with divisor $[c_{m}]$ on $\Spec\OO_{F}$ such that $m\to [c_{m}]$ is a $\sigma_{1}$-derivative (\S\ref{sec:calS}), hence zero in $\bcalS\otimes \Div\,(\Spec\OO_{F})\subset\bcalS\otimes\Div^{\Ar}(\X)$. In fact if the finite divisor $\hat{\wtil{\xi}}_{\rm fin}={\rm div}(s)$ for a rational section $s$ of $\mathcal{L}$, the same argument shows that $T(m)\hat{\wtil{\xi}}_{\rm fin}={\rm div}(T(m)s)=\sigma_{1}(m){\rm div}(s)+{\rm div}(c_{m})$, hence $m\mapsto \hat{\wtil{\xi}}_{\rm fin}$ is zero in $\bcalS\otimes \Div^{\Ar}(\X)$.

We complete the proof by showing that, for each $v\vert p$, the difference of log functions 
\begin{align}\label{diff}
\psi_{m}^{*}\log_{\calL_{v}^{\sigma_{1}(m)}}-\log_{T(m)\calL_{v}}
\end{align}
on the line bundle $T(m)\calL_{v}$ on $\wtil{X}_{v}$ is a constant on the total space of $\calL_{v}$, and it is a $\sigma_{1}$-derivative when viewed as a function of $m$.  (In particular this shows that $\log_{T(m)\calL_{v}}=\sigma_{1}(m)\psi_{m}^{*}\log_{\calL_{v}}+ {\mathrm{\ constant}}$ is a genuine log function.)

It is enough to show this after pullback via $p_{1}$ on $\wtil{X}_{m}$, where (denoting pulled back objects with a prime) the map $\psi'_{m}$ decomposes as 
$$\psi'_{m}=\otimes_{D}\pi_{D}^{*} \colon \otimes_{D}\det\Lie (\calA'/D)^{\vee}\to(\det\Lie \calA'^{\vee})^{\otimes \sigma_{1}(m)}$$
where the tensor product runs over admissible submodule schemes of level $m$ of $\calA'$ (since base change via $p_{1}$ splits the cover $p_{1}$, there are exactly $\sigma_{1}(m)$ of those). Now the difference \eqref{diff} is the sum of the $\sigma_{1}(m)$ differences 
$$(\pi_{D}^*)^{*}\log_{\calL}-\log_{\calL},$$
which are all the same since they are permuted by the Galois group of $p_{1}$.
As $\pi_{D}^{*}$ acts by multiplication by $(\deg\pi_{D})^{1/2}=\N(m)^{2}$,  by \eqref{logv} each of these differences is $2\log_{v}\N(m)$ so  that \eqref{diff} equals
$$2\sigma_{1}(m)\log_{v}\N(m)$$
which is indeed a $\sigma_{1}$-derivative.
 \end{proof}

 \section{Heights of Heegner points}\label{sec7}
 Let  $\Psi$ be the modular form of level $N$ with  Fourier coefficients given by the $p$-adic height pairing $\langle z, T(m) z\rangle$ (it is a modular form because of Lemma \ref{ismodular} and the fact that the quaternionic Hecke algebra ${\bf T}_{N}'$ is a quotient of ${\bf T}_{N}$, as explained at the end of  \S\ref{shimuracurves}). 
We will compute the heights of Heegner points, with the goal of showing (in \S\ref{proofmt}) that   $\Lf(\Phi')$ and $\Lf(\Psi)$ are equal  up to the action of some Hecke operators.  The main theorem will follow. 
 
 \medskip

The strategy is close to that of Perrin-Riou, namely we separate the local contributions to $\Psi$  from primes above $p$, writing $\Psi\sim \Psi_\mathrm{fin}+\Psi_p$; using the computations of \cite{shouwu, asian}  we find an explicit expression for   $\Psi_\mathrm{fin}$, which in \S\ref{proofmt} we will show to be   ``almost'' equal to the expression for $\Phi'$, while the contribution of $\Psi_p$ is shown to vanish. We circumvent the difficulties posed by the absence of cusps through the use of  $p$-adic Arakelov theory. 

It will be crucial to work in the quotient spaces $\bcalS$, $\bcalS^{\rm ord}$ introduced in \S\ref{sec:Lf}; for the convenience of the reader we copy here the diagram \eqref{quotient} which summarises the relations among them.   
\begin{equation*}
\xymatrix{
{\bf S}_{N}(L) \ar[r]\ar[d]^{e} & \bcalS_{N}^{p{\rm -adic}}(L) \ar[d] \ar@{^{(}->}[r] &  \bcalS^{\rm ord} \\
S_{NP}^{\rm ord}(L)/S_{NP}^{N{\rm -old}} \ar[r]^{\wtil{}} & \bcalS_{N}^{\rm ord}(L) \ar[r]^{\Lf} & L
}
\end{equation*}
We will abuse notation by using the same name for a modular form and its image in $\bcalS_N^{\rm ord}$.

The height pairings  $\langle\, ,\, \rangle$ (and the accompanying Arakelov pairings) on the base change of $X$ to $E$  that will be considered are the ones associated with  a ``cyclotomic'' $p$-adic logarithm given by $\ell=\ell_F\circ \frakN\colon E^{\times}\bks E_{\A^{\infty}}^{\times}\to\Q_{p}$ for some\footnote{In our application, we will take $\ell_{F}={d\over ds}|_{s=0}\nu^{s}$ for a character $\nu\colon \calG_{F}\to1+p\Z_{p}$.} 
$$\ell_{F}\colon  F^{\times}\bks F_{\A^{\infty}}^{\times}\to\Q_{p},$$
and to choices of Hodge splittings on $V_{v, L}=H^{1}_{\rm dR}(X_{v}/E_{v})\otimes L$ ($v\vert p$) such that on $e_{f}V_{v, L}\cong e_{f}M_{\mathcal{J}, L}$, the induced Hodge splitting is the unit root splitting.

As mentioned before, the Shimura curve $X$ and its integral model $\X$ may not be fine enough for the needs of Arakelov and intersection theory, so that we may need to pass to a Shimura curve  $\wtil{\X}\overset{\pi}{\to}\X$ of deeper level away from $p$ and consider the pullbacks $\wtil{\eta}$ of the divisors $\eta$, etc. Then  notation such as  $\langle \hat{\eta}, T^0(m)\hat{\eta}\rangle^\Ar$ is to be properly understood as $\langle \hat{\wtil{\eta}}, T^0(m)\hat{\wtil{\eta}}\rangle^\Ar/\deg\pi$.

\subsection{Local heights at places not dividing $p$}
 The next two results will be used to show the main identity.
 
\begin{lemm}\label{discard}    In the space $\bcalS$ we have $$\langle z, T(m) z\rangle=\langle {\hat z}, T(m) \hat{z}\rangle^\Ar\sim \langle {\hat \eta}, T^0(m)\hat{ \eta}\rangle^\Ar.$$
\end{lemm}
 \begin{proof}
 First observe that  by Lemma \ref{ismodular}, the first member is a modular form of level $N$, so it does indeed belong to $\bcalS_N$. 
 The first equality is a consequence of Propositon \ref{ar}.\ref{arakelov} and the construction of $\hat{z}$. The second part follows from expanding the second term for $m$ prime to $N\Delta$ according to  \eqref{zhat} and observing that the omitted terms  are zero in $\bcalS$ by Proposition \ref{heckeheegner}.
 \end{proof}
 
We can therefore write
\begin{equation}\label{finp} \Psi\sim\sum_{w}\Psi_{w}=\sum_{v}\Psi_{v}=\Psi_\fin+\Psi_p
\end{equation}
in $\bcalS$, with the first sum running over the finite places $w$ of $E$, the second sum running over the finite places $v$ of $F$, and
 \begin{gather*} 
 \Psi_w(m)= \langle {\hat \eta}, T^0(m)\hat{ \eta}\rangle^\textrm{Ar}_w,\quad
   \Psi_{v}=\sum_{w|v}\Psi_{w},\quad
  \Psi_{\rm fin}=\sum_{v\nmid p}\Psi_{v},\quad  \Psi_p=\sum_{v\vert p} \Psi_v.
 \end{gather*}
 (We are exploiting the fact that for $m$ prime to $N\Delta$ the divisors $\hat{\eta}$ and $T^0(m)\hat{\eta}$ have disjoint supports so that we can apply Proposition \ref{ar}.\ref{arakelovfin}.)

\medskip

For each prime $\wp$ of $F$ above $p$, we define an operator\footnote{This is different from the operator bearing the same name in \cite{PR}.} on $\calS$
$$\mathcal{R}_\wp=U_\wp-1,\quad\calR_p=\prod_{\wp\vert p}\calR_\wp.$$
We also define, for  integers $\mu_{\wp}\geq 1$,   operators
$$\calR_{\wp}^{(\mu_{\wp})}=U_{\wp}^{\mu_{\wp}}-1, \quad \calR_{p}^{(\mu)}=\prod_{\wp\vert p}\calR_{\wp}^{(\mu_{\wp})}
 .$$ 
\begin{prop}\label{Psi} In the space $\bcalS$ 
 we have
$$\Psi_\fin \sim\sum_{v\nmid p} \Psi_v + h,$$
where  $h$ is a  modular form which is killed by $\Lf$; 
the sum runs over the finite places of~$F$ and the summands are given by:
\begin{enumerate}
\item If $v=\wp$ is inert in $E$, then 
$$\Psi_v(m)=
 \sum_{\substack{n\in Nm^{-1}\Delta^{-1}\\  \eps_{v}((n-1)n)=1\ \forall v\vert \Delta \\ 0<n<1}}
 2^{\omega_{\Delta}(n)} 
 r((1-n)m\Delta) r(nm\Delta/N\wp)(v(nm/N)+1)\ell_{F,v}(\pi_{v}). $$ 
\item If $v=\wp\vert \Delta$ is ramified in $E$, then
$$\Psi_v(m)= \sum_{\substack{n\in Nm^{-1}\Delta^{-1}\\  \eps_v((n-1)n)=-1\\ \eps_w((n-1)n)=1\ \forall v\neq w|\Delta \\ 0<n<1}} 
 2^{\omega_{\Delta}(n)} 
 r((1-n)m\Delta) r(nm\Delta/N)(v(nm)+1)\ell_{v}(\pi_{v}).   $$
\item If $v$ is split in $E$, then
$$\Psi_{v}(m)=0.$$
\end{enumerate}
\end{prop} 
 \begin{proof} 
For $m$ prime to $N\Delta$ we have $\Psi_\fin(m)=\sum_{w\nmid p} \langle {\hat \eta}, T^0(m)\hat{\eta}\rangle^\textrm{Ar}_w$  (the sum running over all finite places  $w$ of $E$). By Proposition \ref{ar}.\ref{arakelovfin}, up to the factor $\ell_{F,v}(\pi_{v})$ (which equals $\ell_{w}(\pi_{w})$ or its half   for each place $w$ of $E$ above $v$),  each term is given by an intersection multiplicity $({\hat \eta}, T(m) \hat{\eta})_w$, which is computed by Zhang.

When $v(N)\leq 1$ for all $v$ which are not split in $E$,  the result is summarised in \cite[Proposition 5.4.8]{shouwu}; in this case, the values obtained there are equivalent to the asserted ones  by \cite[Proposition 7.1.1 and Proposition 6.4.5]{shouwu}, and there is no  extra term $h$. In fact (and with no restriction on $N$), these values also appear  as the local components ${}^{\C}\Phi'_{v}$ at finite places of a form ${}^{\C}\Phi'$ of level~$N$ which is a kernel of the Rankin--Selberg convolution for the central derivative $L'(f_{E},1)$ of the complex $L$-function. 

In general, \cite[Lemma 6.4.3]{asian}  proves that\footnote{We are adapting the notation to our case. In \cite{asian}, the form $f$ is denoted by $\phi$, the functions ${}_{v}h$ are denoted by ${}_{v}f$.}
\begin{align}\label{asiancomp} 
{\Psi_{v}\over \ell_{F,v}(\pi_{v})} \sim {  {}^{\C}\Phi_{v}'^{\sharp}\over \log \N(\wp_{v})} +{}_{v}h,
\end{align}
 where  ${}_{v}h$ is a modular form with zero projection onto the $f$-eigenspace (see the discussion at the very end of \cite{asian}; the forms ${}_{v}h$ come from intersections at bad places), and ${}^{\C}\Phi'^{\sharp}$ is a form of level $N\Delta$ which is a kernel for the complex Rankin--Selberg convolution in level $N\Delta$ (in particular, it is modular and ${\rm Tr}_{\Delta}({}^{\C}\Phi'^{\sharp})={}^{\C}\Phi' + h'$ where $h'$ is a modular form of level $N$ which is orthogonal to $f$). Applying the operator ${\rm Tr}_{\Delta}$ in \eqref{asiancomp} we recover the asserted formula.
\end{proof}

\subsection{Local heights at  $p$ / I}\label{sec:hp}\label{7.2}
The following is the key result concerning the local heights at places dividing $p$. We assume that all primes $\wp$ of $F$ dividing $p$ are split in $E$.
\begin{prop}\label{killp} The arithmetic function $\mathcal{R}^4_p\Psi_p$ belongs to $\bcalS_N^{\rm ord}\subset \bcalS^{\rm ord}$, and we have
$$\Lf(\mathcal{R}^4_p\Psi_p)=0.$$
\end{prop}

The modularity assertion follows, as in \cite{nekovar}, by difference from the modularity of $\Psi$ (hence of $\mathcal{R}_{p}^{4}\Psi$) and the modularity of $\mathcal{R}_{p}^{4}\Psi_{\rm fin}$ proved in Proposition \ref{psifinetc} below. 

The proof of the vanishing of the $f_{\alpha}$-component will be completed in \S\ref{sec:hp2} using the results of the rest of this subsection.

\medskip

We start by fixing  for the rest of this section  a prime $\wp$ of $F$ dividing $p$.  
Fix an isomorphism $B_{\wp}=B\otimes_{F}{F_{\wp}}\cong M_{2}(F_{\wp})$ identifying the local order $R_{\wp}$ with $M_{2}(\OO_{F,\wp})$, and the field $E\subset B$ with the diagonal matrices in $M_{2}(F_{\wp})$. 
Let the divisors $\eta_c$ be as in \eqref{eta}, and denote
$$H_s=H[\wp^s],\qquad   u_s=u(\wp^s).$$
Let $y_{s}\in X(H_{s})$ be the CM point of conductor $\wp^{s}$ 
defined by 
$$y_{s}=\left[\left({x_0}, \, \iota_{\wp}\twomat{\pi^{s}}{1}{}{1}\right)\right],$$
where $\iota_{\wp}\colon \GL_{2}(F_{\wp})\to \widehat{B}^{\times}$ is the natural inclusion, and $\pi$ is a uniformiser at~$\wp$. 

Fix a place $w$ of $H$ above $\wp$; we still denote by $w$ the induced place on each $H_{s}$, and by $\frakp$ the prime of $E$ lying below $w$. Since $\wp$ splits in $E$, by \cite[\S2.2]{shouwu} the CM points $y_{s}=[A_{s}]$ are ordinary,  and their canonical submodules with respect to the reduction modulo $w$  are given by $A_{s}[\frakp]$.

\begin{prop}[Norm relations]\label{NR} Let $y_{s}$ be the system of CM points defined above.
\begin{enumerate}
\item\label{NR1} Let $m=m_0\wp^n$ be an ideal of $F$ with $m_0$ prime to $\wp N$.  We have 
\begin{gather*} [T(m\wp^{r+2})-2 T(m\wp^{r+1})+T(m\wp^r)](\eta) = u_{n+r+2}^{-1} T(m_0)\Tr_{H_{n+r+2}/E}(y_{n+r+2})\end{gather*}
as divisors on $X$.
\item\label{NR2} For all $s\geq 1$, we have
$$T(\wp)\, y_{s}= {\rm Tr}_{H_{s+1,w}/H_{s,w}}(y_{s+1})+y_{s-1}.$$
\item\label{NR3} For all $s\geq 1$, we have
$$\varphi(y_{s})=y_{s-1},$$
where $\varphi$ is the lift \eqref{froblift} of Frobenius with respect to the reduction modulo $w$.
\end{enumerate}
\end{prop}
\begin{proof} By the multiplicativity of Hecke operators it is enough to prove the statement of part \ref{NR1} for $m_0=1$. A  simple computation based on Proposition \ref{heckeheegner} shows that the left-hand side is equal to $\eta_{\wp^{n+r+2}}$.  Since the Galois action of $\Gal(H_{n+r+2}/E)$ is simply transitive on $Y_{\wp^{n+r+2}}$, the right-hand side is also equal to $\eta_{\wp^{n+r+2}}$. 

For part \ref{NR2},  use the notation $[g]$ to denote $[({x_0},\iota_{\wp}(g))]$. Then we have
$$T(\wp)\, y_{s}=\sum_{j\in \OO_{F,\wp}/\wp}\left[ \twomat {\pi^{s}}1{}1\twomat{\pi}j{}1\right]+ \left[ \twomat {\pi^{s}}1{}1\twomat1{}{}{\pi}\right].$$
The last term is identified as $y_{s-1}$ after acting by the diagonal matrix $\pi^{-1}{\rm id}$ (whose action is trivial on $X$). On the other hand, by local class field theory and the description of the Galois action on CM points of \S\ref{sec:heegner}, we have
$${\rm Tr}_{H_{s+1,w}/H_{s,w}}(y_{s+1})=\sum_{j\in\OO_{F,\wp}/\wp}\left[\twomat {1+j\pi^{s}}{}{}1\twomat {\pi^{s+1}}1{}1 \right],$$
which is the same as the above sum in $j$.

For part \ref{NR3}, which in fact is not needed in what follows, we switch to the moduli description,\footnote{As usual, after base change to a suitable quadratic extension $F'$.} so $y_{s}=[A_{s}]=[A_{D_{s}}]$ for an increasing sequence of admissible submodules $D_{s}$ of level $\wp^{s}$
 (this follows from part \ref{NR2}, together with a variant for $s=0$ that we omit, and the moduli description of Hecke correspondences).  
 Now $D_{1}$ is different from $\can(A)=A[\frakp]$ since $[A_{A[\frakp]}]$ has conductor $1$; and it in fact  each $D_{s}$ does not contain $A[\frakp]$, since if it did then $[A_{s}]$ would be in the support of $T(\wp)^{s-1}[A_{A[\frakp]}]$ which is easily seen\footnote{By the following observation: if $y$ is a CM point of conductor $c$, then the support of $T(m)\, y$ consists of CM points of conductors dividing $cm$.} to consist of CM points of conductor dividing $\wp^{s-1}$. It follows that the point $\varphi([A_{s}])=[A_{D_{s}+\can(A_{s)}}]=[A_{D_{s}+A[\frakp]}]$ is in the support of $T(\wp)[A_{s}]$, but  it is not one of the Galois conjugates of $y_{s+1}$ since as just seen it has lower conductor; by part \ref{NR2}, it must then be $y_{s-1}$.
\end{proof}

\begin{lemm}\label{killdiv} Let $w$ a place of $E$ dividing $\wp$, and let $h\in E_w(X)$ be a rational function whose reduction at $w$ is defined and nonzero.  Let $\mu=\mu_{\wp}$ be the order of the ideal $\mathfrak{p}_{w}$ in the relative class group  of ${E}/F$. 
Then the arithmetic functions
$$ \mathcal{R}^2_\wp\calR_{\wp}^{(\mu)} \langle \widehat{\divisor}(h), T^0(m) \hat{\eta}\rangle^\Ar_w, \quad  \mathcal{R}^3_\wp \langle {\divisor}(h), T(m)z\rangle_w$$
belong to the kernel of the $\wp$-partial ordinary projection $e_{\wp}$.
\end{lemm}
\begin{proof}
We  show more precisely that
\begin{align}\label{quello}
v(U_\wp^s\mathcal{R}_\wp^3 \langle \widehat{\divisor}(h), T^0(m)\hat{\eta}\rangle^\textrm{Ar}_w)\geq v(\N \wp^{s})- C
\end{align}
 for a uniform constant $C$, where $v$ is the $p$-adic valuation. We may assume $m$ prime to $\wp N\Delta$.  
 
For  the second expression,  under our assumptions we have $T(m\wp^{s})\eta=T^{0}(m\wp^{s})\eta+r(m\wp^{s})\eta-h\sigma_{1}(m\wp^{s})\xi$, so  the analogue of    \eqref{quello} holds with the same proof 
together with the observation that $\calR_{\wp}^{2}r(m)=0  $ and  $v(\sigma_{1}(m\wp^{s}))=v(\N\wp^{s})$. (Cf. \cite[Lemme 5.4]{PR}.) 

As  $\calR_\wp^2 r(m)=0$, Proposition \ref{NR}.\ref{NR1} gives 
$$U_\wp^s\calR_\wp^2 {\eta}= u_{s+2}^{-1}\mathrm{Tr}_{H_{s+2}/E} y_{s+2}$$
where $y_{s+2}\in Y_{\wp^{s+2}}$; we make a compatible choice of $y_{s}$ such as the one described above Proposition \ref{NR}.

For $s$ large enough the divisor of $h$ is supported away from $y_s$ and its conjugates. Then by Proposition \ref{ar}.\ref{arakelovdiv} we have 
  \begin{align*} U_\wp^s\calR_\wp^2 \langle \widehat{\divisor}(h), T^0(m) \hat{\eta}\rangle^\textrm{Ar}_w &=u_{s+2}^{-1}\ell_{w}(h(T^0(m)y_{s+2}))\\
  &=u_{s+2}^{-1} \sum_{w'|w} \ell_{w} (N_{H_{s+2,w'}/E_{w}} h(y_{s+2})),
 \end{align*}
where $w'$ runs over the places  of $H$ above $w$ (which are  identified with the places of $H_{s+2}$ above $w$, since $H_{s+2}/H$ is totally ramified above $\wp$).

For any $w'|w$ we have
$$\calR_\wp^{(\mu)}\ell_{w} (N_{H_{s+2,w'}/E_{w}}h(y_{s+2}))=\ell_{w} \circ N_{H_{w'}/E_{w}}( N_{H_{s+2+\mu,w'}/H_{w'}}h(y_{s+2+\mu})/ N_{H_{s+2,w'}/H_{w'}}h(y_{s+2})).$$
Suppose that (for $s$ large enough)
\begin{equation}\label{indept} \textrm{the $w'$-adic valuation of $N_{H_{s,w'}/H_{w'}}(h(y_{s}))$ only depends on the residue class of $s$ $(\mod \mu)$}.\end{equation}
Then each $w'$-summand  in the expression of interest is  the product of $u_{s+2}^{-1}$ (which is eventually constant in $s$) and the $p$-adic logarithm of a unit which is a norm from an extension of $E_w$ whose ramification degree is a constant  multiple of $\N\wp^s$; hence its $p$-adic valuation is also at least a constant multiple of  the valuation of $\N\wp^{s}$, which proves the Lemma.

It remains to prove  \eqref{indept}. We have 
\begin{equation}\label{congru0}
w'(N_{H_{s,w'}/H_{w'}}(h(y_{s})))=[H_{s,w'}:H_{w'}](\underline{(h)}, {\underline{y_{s}}}),
\end{equation}
where the pairing in the right-hand side denotes the intersection multiplicity of the Zariski closures in the integral model. Now as in \cite[Lemme 5.5]{PR}, if $\pi_{s}$ denotes a uniformiser of $H_{s,w'}$ we can show that we have 
\begin{equation}\label{congru}
\underline{y_{s}}\equiv \underline{ y_{s-\mu}} \mod \pi_{s},\qquad \underline{y_{s}}\not\equiv \underline{ y_{s-\mu}}\mod\pi_{s}^{2}.
\end{equation}
In fact, we first check that the two points have the same reduction. By \cite[Lemma 5.4.2]{asian}, the set of   points in the special fibre $\X\times_{\OO_{F,\wp}}\baar{\bf F}_{\wp}$ having CM by $E$  (and thus being ordinary, as $\wp$ splits in $E$) is identified with 
\begin{align}\label{modp}
E^{\times}\bks (N(F_{v})\bks \GL_{2}(F_{\wp}))\times \B^{\wp\infty \times}/\widehat{F}^{\times}\widehat{R}^{\times},
\end{align}
(where $N$ is the group of upper triangular unipotent matrices)
in such a way that the reduction map sends the CM point $[(x_{0},g)]\in X(\C)$ to the class of $g$. Then if $\sim$ denotes the equivalence relation in \eqref{modp}, and $t\in E^{\times}$ is a generator of the ideal $\mathfrak{p}_{w}^{\mu}a$ for some ideal $a$ of $\OO_{F}$ with ad\`elic generator $\pi_{a}$, the reduction of $y_{s}$ is the class of 
\begin{multline*}
\iota_{\wp}\left(\twomat {\pi^{s}} {1}{}1\right)\sim \iota_{\wp}\left(\twomat {\pi^{s}} {}{}1\right)\sim \iota_{\wp}\left(\twomat {\pi^{\mu}} {}{}1\twomat {\pi^{s-\mu}} {}{}1\right)\\
\sim t \,\iota_{\wp}\left(\twomat {\pi^{s-\mu}} {}{}1\right)(t^{\wp\infty})^{-1}\pi_{a}^{-1}\sim \iota_{\wp}\left(\twomat {\pi^{s-\mu}} {}{}1\right)\sim\iota_{\wp}\left(\twomat {\pi^{s-\mu}} {}{}1\right),
\end{multline*}
which is the same as the reduction of $y_{s-\mu}$.

We can then verify the congruence relation \eqref{congru} on the completed local ring $\widehat{\OO}_{\X/W(\baar{\bf F}_{v}),\baar{y}}$  of the common reduction $\baar{y}$; here $W$ is the ring of integers in the completion of the maximal unramified extension of $F_{v}$. By  \cite[5.5 Proposition]{carayol} this is the universal deformation ring of the $p$-divisible module $\mathcal{G}^{1}_{\baar{y}}$ (with the notation of \S\ref{shimuracurves}). 
%
As the  point $\baar{y}$ is ordinary, such module is isomorphic to  the product $F_{\wp}/\OO_{F,\wp}\times  \Sigma_{1}$, where $\Sigma_{1}$ is the Lubin--Tate formal $\OO_{F, \wp}$-module of height one.
 Now its lifting  $\calG_{s}^{1}=\calG_{\underline{y_{s}}}^{1}$ is defined precisely over the ring of integers of $H_{s,w'}$, and so it is a quasi-canonical lifting of level $s$ of its reduction $\calG_{\baar{y}}$, in the sense of Gross \cite{quasi}.
  Then by \cite[\S 6]{quasi} (see also \cite{argos} for a detailed account),  $\calG_{s}^{1}$ is congruent to the canonical lifting modulo $\pi_{s}$ but not modulo $\pi_{s}^{2}$, whereas $\calG_{s-\mu}^{1}$ is congruent to the canonical lifting modulo $\pi_{s-\mu}=\pi_{s}^{\N\wp^{\mu}}$; this implies \eqref{congru}.
   Then for  each irreducible component  $\underline{a}$ in the support of $\underline{(h)}$, the sequence $[H_{s,w'}:H_{w'}](\underline{a}, \underline{y_{s}})$ stabilises to either $0$ or $1$, so that the expression \eqref{congru0} is indeed eventually constant along the arithmetic progression.
\end{proof}

\begin{lemm}\label{ismodular2} For each divisor  $D\in\Div^{0}(X)(E_{v})$ (respectively, $\widehat{D}\in \Div^{\Ar}(X)$),
the element of $\bcalS^{\rm ord}$ given by 
$$ m\mapsto \mathcal{R}_{\wp}^{2}\mathcal{R}_{\wp}^{(\mu)} \langle D, T(m)z\rangle_{w}\qquad {\textit{ (respectively } }m\mapsto  \mathcal{R}_{\wp}^{2}\mathcal{R}_{\wp}^{(\mu)} \langle \widehat{D}, T^{0}(m)\hat{\eta}\rangle_{w}^{\Ar}{\textit{)}}$$
is well-defined 
independently of the choice of $D$ in its class $[D]$ (of $\widehat{D}$ in its class $[\widehat{D}]$); it will be denoted by
$$ \mathcal{R}_{\wp}^{2}\mathcal{R}_{\wp}^{(\mu)} \langle [D], T(m)z\rangle_{w} \qquad  {\textit{ (respectively } }  \mathcal{R}_{\wp}^{2}\mathcal{R}_{\wp}^{(\mu)}\langle [ \widehat{D}], T^{0}(m)\hat{\eta}\rangle_{w}^{\Ar}\textit{)}.$$

If $\widehat{D}=D$, then the two elements coincide as elements of $\bcalS^{\rm ord}$; moreover, 
for the arithmetic function $\Psi_{w}\in\bcalS$ with $\Psi_{w}(m)=\langle \hat{\eta}, T^{0}(m)\hat{\eta}\rangle_{w}^{\Ar}$, we have
\begin{equation}\label{htvsarp}
 \mathcal{R}_{\wp}^{2}\mathcal{R}_{\wp}^{(\mu)}\Psi_{w}  \sim  \mathcal{R}_{\wp}^{2}\mathcal{R}_{\wp}^{(\mu)}\langle [z], T(m)z\rangle_{w} 
\end{equation}
in $\bcalS^{\rm ord}$. 
\end{lemm}
\begin{proof}

The first part follows from Lemma \ref{killdiv}.
For the second part we may argue as in the proof of  Lemma \ref{discard}: for example, in $\bcalS$ we have 
\begin{align*}
 \mathcal{R}_{\wp}^{2}\mathcal{R}_{\wp}^{(\mu)}\langle\hat{\eta}, T^0(m)\hat{\eta}\rangle_w^\Ar  &\sim  \mathcal{R}_{\wp}^{2}\mathcal{R}_{\wp}^{(\mu)}\langle\hat{\eta}+\widehat{\divisor}(h), T^0(m)\hat{\eta}\rangle_w^\Ar \\
 &\sim  \mathcal{R}_{\wp}^{2}\mathcal{R}_{\wp}^{(\mu)}\langle\hat{z}+\widehat{\divisor}(h), T(m)\hat{z}\rangle_w^\Ar= \mathcal{R}_{\wp}^{2}\mathcal{R}_{\wp}^{(\mu)}\langle z+\divisor(h), T(m)z\rangle_w.
\end{align*}
\end{proof}

\subsection{Local heights at $p$ / II}\label{sec:hp2}  
Here we prove the vanishing statement for $p$-adic local symbols asserted in Proposition \ref{killp}. In fact we will show the equivalent statement that 
$$\Lf(\mathcal{R}^4_p\mathcal{R}_{p}^{(\mu)}
\Psi_p)=\prod_{\wp\vert p }(\alpha_{\wp}^{\mu_{\wp}}-1) \Lf(\mathcal{R}^4_p\Psi_{p})=0,$$
where the integers  $\mu=(\mu_{\wp})_{\wp\vert p}$ are as in Proposition \ref{killdiv}.

Let $e_{f}\in {\bf T}_{Np}\otimes {M_{f}}$ be the maximal idempotent satisfying $T(m)\circ e_{f}= a(f, m) e_{f}$ for all $m$ prime to $Np$;\footnote{Recall that $M_{f}$ is the number field generated by the Fourier coefficients $a(m,f)$.} viewed as an endomorphism of $S_{N\prod_{\wp\vert p}\wp}$, it is the projector onto the subspace generated by $f$ and $[\wp]f$ for all the primes $\wp$ of $F$ dividing  $p$. With  $z_{f}=e_{f}[z]$, we have by \eqref{htvsarp}
$$e_{f}e\calR_{p}^{4}\mathcal{R}_{p}^{(\mu)}\Psi_{p}= \calR_{p}^{4}\mathcal{R}_{p}^{(\mu)}\langle z_{f},T(m)z\rangle_{p}$$
in $\bcalS_{N}^{\rm ord}$, where the left-hand side makes sense by the modularity part of Proposition \ref{killp} and the right-hand side makes sense by Lemma \ref{ismodular2}.
We also denote, for $w$ a place of $E$ above the $F$-prime $\wp\vert p$, and $i\geq 2$,
\begin{equation}\label{efpsip}
e_{f}e\calR_{\wp}^{i}\mathcal{R}_{\wp}^{(\mu_{\wp})}\Psi_{w}:=\calR_{\wp}^{i}\mathcal{R}_{\wp}^{(\mu_{\wp})}\langle z_{f},T(m)z\rangle_{w}
\end{equation}
where the right-hand side makes sense as an element of $\bcalS^{\rm ord}$ by Lemma \ref{ismodular2}. (As we have not shown that $\calR_{\wp}^{3}\Psi_{w}$ is modular, the left-hand side is not otherwise defined.) Then by definition we have 
\begin{equation}\label{separate}
e_{f}e \calR_{p}^{4}\mathcal{R}_{p}^{(\mu)}\Psi_{p}=\sum_{w\vert p} e_{f}e \calR_{p}^{4}\mathcal{R}_{p}^{(\mu)}\Psi_{w}.
\end{equation}

Now since $\Lf=\Lf\circ e_{f}=\Lf\circ e_{f}\circ e$,  by \eqref{separate} the desired result is implied by the following Lemma for all $\wp\vert p$.
 
\begin{lemm}\label{eigen}  Suppose that $f$ is ordinary at $\wp$. 
For each place $w$ of $E$ above $\wp\vert p$, the element $e_{f}\calR_{\wp}^{2} \mathcal{R}_{\wp}^{(\mu_{\wp})}\Psi_{w}$ is zero in  $\bcalS^{\rm ord}$.
\end{lemm}
\begin{proof}
The ordinarity assumption and Lemma \ref{unorm}
 (cf. \cite[Exemple 4.12]{PR})  imply that $z_f$ is ``almost'' a universal norm in the totally ramified  $\Z_p$-extension $E_{w,\infty}^{{\ell}}$ of $E_w$: that is, after perhaps replacing $z_{f}$ by an integer multiple, for each layer $E^{{\ell}}_{w,n}$ we have  
$$z_f=\Tr_n(z_n)$$ for some  $z_n\in e_{f}J(X)(E^{{\ell}}_{v,n})$, where $\Tr_n=\Tr_{{E^{\ell}_{w,n}/E_w}}$ . Then we have  
$$e_{f}\calR_\wp^{2} \mathcal{R}_{\wp}^{(\mu_{\wp})}\Psi_w(m)=\calR_{\wp}^{2} \mathcal{R}_{\wp}^{(\mu_{\wp})}\langle \Tr_{n}(z_n), T(m) z\rangle_w= \calR_{\wp}^{2} \mathcal{R}_{\wp}^{(\mu_{\wp})}\langle z_n, T(m) z\rangle_{w,n}.$$
where  $\langle\ , \ \rangle_{w,n}$ is the local height pairing on $\Div^0(X)(E^{{\ell}}_{w,n})$ associated with the logarithm $\ell_{n,v}=\ell_{w}\circ N_{E^{{\ell}}_{w,n}/E_{w}}$. By Proposition \ref{theo:ht}.\ref{comp}-\ref{boundedness},   the right-hand side above has image in $c^{-1}\mathrm{Im}(\ell_n)\subset\Z_p$ for a uniform nonzero constant $c\in\Z_p$. As the extension  $E^{{\ell}}_{w,n}/E_{w}$ has ramification degree $p^{n}$, we have  for some nonzero $c'\in\Z_p$
$$e_{f}\calR_{\wp}^{2}\mathcal{R}_{p}^{(\mu_{\wp})}\Psi_v(m)\in c^{-1}\mathrm{Im}(\ell_n)\subset c'^{-1}p^n\Z_p$$
for all $n$; therefore $e_{f}\calR_{\wp}^{2}\mathcal{R}_{p}^{(\mu_{\wp})}\Psi_w=0$. 
\end{proof}

\begin{part}{Main theorem and consequences}

\section{Proof of the main theorem}\label{proofmt}
In this section we prove Theorem \ref{theoremB}.

\subsection{Basic case}\label{8.1}

First we prove the formula when $\Delta_{E/F}$ is totally odd and each prime $\wp$ of $F$ dividing $p$  splits in $E$. 

Let $\Psi_{\W}\in \bcalS_{N}$ denote the modular form with coefficients $\langle [z], T(m)[z]\rangle_{\W}$, where $\W=\nu\circ\frakN$ and  $\langle \ , \ \rangle_{\W}$ is the height pairing on $J(X)(E)$ associated with the $p$-adic logarithm $\ell_{F}\circ\frakN$, with $$\ell_{F}={d\over ds}\nu^{s}|_{s=0}\colon F^{\times}\bks F_{\A^{\infty}}^{\times }\to \Q_{p}.$$

Recall that $\Lf$ is a continuous functional, so that it commutes with limits and
$$L_{p,\W}'(f_{E})(\one)=\Lf\left({d\over ds}\Phi(\W^{s})|_{s=0}\right)= \Lf(\Phi'_{\W}).$$
We compare the Fourier coefficients of $\Phi'_{\W}$ and $\Psi_{\W}=\Psi_{\W, {\rm fin}}+\Psi_{\W, p}$.

\begin{prop}\label{psifinetc} Suppose that all of the prime ideals $\wp$ of $F$ dividing $p$ are principal.
Then we have
$$ \left(\prod_{\wp\vert p}U_{\wp}^{4}-U_{\wp}^{2}\right)\Phi'_{\W}
\sim \left(\prod_{\wp\vert p}(U_{\wp}-1)^{4}\right) \Psi_{\W,{\rm fin}}$$
in the quotient space $\bcalS_{N}^{\rm ord}/\Ker(\Lf).$
\end{prop}
\begin{proof} We prove that the identity holds in $\bcalS/(\mathcal{D}_{N} +\Ker(\Lf)$, where $\Ker(\Lf)$ denotes the image in $\bcalS$ of classical modular form killed by $\Lf$. Then since the left-hand side belongs to $\bcalS_{N}^{p{\rm -adic}}$, so does the right-hand side (and after further quotienting by $\Ker(e)$ we descend to   $\bcalS_{N}^{\rm ord}/\Ker(\Lf)$).

The coefficients  of $\Psi_{{\rm fin}}=\Psi_{\W,{\rm fin}}$ are computed in Proposition \ref{Psi}. To lighten the notation, we write the explicit expression for $\Psi(m)=\sum_{v {\textrm{ non-split}}}\Psi_{v}(m)$ as
$$\Psi_{v}(m)=\sum_{\substack{n\in S_{v}([m]) \\ v_{\wp}(nm)\geq 0\, \forall \wp\vert p}} c_{v}([nm]) r((1-n)m\Delta)r(nm\Delta/N\wp_{v}^{\epsilon(v)}),$$
where the value $c_{v}([nm])$ only depends on the prime-to-$p$ part of the fractional ideal  $nm$, and the set $S_{v}([m])$ only depends on $v$ and the prime-to-$p$ part of $m$; here $\epsilon(v)=1$ if $v$ is inert and $\epsilon(v)=0$ if $v$ is ramified.

The coefficients of $\Phi'$ are computed in Proposition \ref{Phi}. They look ``almost'' the same, in that, up to the modular form $h$ of Proposition \ref{Psi}, which is in $\Ker(\Lf)$, we have, when $m$ is divisible by every $\wp\vert p,$ 
$$\Phi'_{\W}(m)=\sum_{v{\textrm{\ non-split}}} \Psi_{v}^{[p]}(m),$$
where for a product $P$ of some of the primes $\wp\vert p$, we denote
$$\Psi_{v}^{[P]}(m)=\sum_{\substack{n\in S_{v}([m]) \\ v_{\wp}(nm)\geq 0\, \forall \wp\vert p\\ v_{\wp}(nm)=0\, \forall  \wp\vert P}} c_{v}([nm]) r((1-n)m\Delta)r(nm\Delta/N).$$

Then it is enough to show that for each $v\nmid p$, each $\wp\vert p$, and each $\wp\nmid P$ with $P$ as above, we have 
$$(U_{\wp}^{4}-U_{\wp}^{2})\Psi_{v}^{[P\wp]}=(U_{\wp}-1)^{4}\Psi_{v}^{[P]}.$$
For the sake  of notation we write the computation when $v$ is ramified in $E$ and $P=\prod_{\wp'\neq \wp}\wp'$ (for more general $P$ one just needs more notation to keep track of $v_{\wp'}(nm)$ for the primes $\wp'\neq \wp$).

The right-hand side equals 
\begin{equation}\label{3.20}
\sum_{i=0}^{4}(-1)^{i}{4\choose i}\sum_{\substack{n_{i}\in S_{v}([m])\\v_{\wp'}(n_{i}m)= 0 \, \forall \wp'\neq \wp, \wp'\vert p\\v_{\wp}(n_{i}m\wp^{i}) \geq 0}} c_{v}([n_{i}m])r((1-n_{i})m\wp^{i}\Delta)r(n_{i}m\wp^{i}\Delta/N).
\end{equation}  
From the relation $r(m_{0}\wp^{t})=(t+1)r(m_{0})$, valid for $\wp\nmid m_{0}$, we deduce the relations 
\begin{align*}
2r(m)&=r(m\wp)+r(m\wp^{-1}),\\
2r(m)&=r(m\wp^{2})+r(m\wp^{-2} )\quad \textrm{if } \wp\vert m,\\
2r(m)&=r(m\wp^{2})-r(m)  \quad \textrm{if } \wp\nmid m,
\end{align*}
where we recall that $r(m)=0$ if $m$ is not an integral ideal.
Then we can pick a totally positive generator in $F$ for the ideal $\wp$, which abusing notation we will still denote by $\wp$, and  make the substitution $n_{i}=\wp^{t-i}n_{0}$ with $\wp^{t}||n_{i}m\wp^{i}$ to  write \eqref{3.20} as 
$$\sum_{t\geq 0} \sum_{\substack{n_{0}\in S_{v}(m)\\ v_{\wp'}(n_{0}m)=0\,\forall \wp\vert p}} c_{v}([n_{0}m])r((n_{0}m)^{(\wp)})(t+1)A_{t}$$
where we recall that for an ideal $m$ we denote $m^{(\wp)}=m\wp^{-v_{\wp}(m)} $, and 
\begin{align*}
A_{t}&=r(m\Delta\wp^{4}(1-n_{0}\wp^{t-4}))[t+1-2t+2(t-1)]\\
   &+r(m\Delta\wp^{2}(1-n_{0}\wp^{t-2}))\left[-2(t+2)+\begin{cases} 4(t+1)-2t &\textrm{if\ \ } t\geq 1\\ 3 &\textrm{if\ \ } t=0\end{cases}\right]\\
   &+r(m\Delta(1-n_{0}\wp^{t}))[t+3-2(t+2)+t+1].
\end{align*}
The three expressions in square brackets vanish when $t>0$ and yield, respectively, $1$, $1$, and $0$ when $t=0$. Substituting back $n_{4}=\wp^{t-4}n_{0}$ in the first line and $n_{2}=\wp^{t-2}n_{0}$ in the second line, we deduce that \eqref{3.20} equals 
$$(U_{\wp}^{4}-U_{\wp}^{2})\Psi_{v}^{[P\wp]}$$
as desired.\footnote{Cf. \cite[Proof of Proposition 3.20]{PR}.}
\end{proof}

Combining this Proposition with Proposition \ref{killp} which says
$$\Lf \left(\prod_{\wp\vert p}(U_{\wp}-1)^{4} \Psi_{\W,p}\right)=0,$$
we find for $\W=\nu\circ\frakN$:
\begin{align*}
D_{F}^{2}\prod_{\wp} (\alpha_{\wp}^{4}-\alpha_{\wp}^{2} )   L'_{p,\W}(f_{E}, \one)
&=\prod_{\wp}(\alpha_{\wp}^{4}-\alpha_{\wp}^{2})\left(1-{1\over \alpha_{\wp}^{2}}\right) \left(1-{\N \wp\over \alpha_{\wp}^{2}}\right)\Lf(\Phi'_{\W} )\\
&=\prod_{\wp}(\alpha_{\wp}-1)^{4}\left(1-{1\over \alpha_{\wp}^{2}}\right)\left(1-{\N \wp\over \alpha_{\wp}^{2}}\right)\Lf(\Psi_{\W})\\
&=\prod_{\wp}(\alpha_{\wp}-1)^{4}\left(1-{1\over \alpha_{\wp}^{2}}\right) \langle z_{f}, z_{f}\rangle_{\W}.
\end{align*}
Here, besides the definition of $L_{p}(f_{E})$ (Definition \ref{def:prs}) we have used various properties of the functional $\Lf$ from Lemma \ref{Lf} and the observation that the projection onto the $f$-component of the modular form $\Psi_{\W}\in S_{2}(K_{0}(N),\Q_{p})$ is $\one_{f}(\Psi_{\W})=\langle z_{f}, z_{f}\rangle_{\W}.$

\medskip

This completes the proof of Theorem \ref{theoremB} when $(\Delta_{E/F},2)=1$ and all primes $\wp\vert p$ split in $E$.
\subsection{Reduction to the basic case}\label{kobtrick}
The general case, where $E$ is only assumed to satisfy $(\Delta_{E/F}, Np)=1$, can be reduced to the previous one under the assumption 
$$L_{p,\W}'(f_{E}, \one)\neq 0$$
 by the following argument due to Kobayashi \cite[proof of Theorem 5.9]{kobayashi} using the complex Gross--Zagier formula (which is known with no restrictions on $\Delta$) and the factorisation $L_{p}(f_{E}, \chi\circ\frakN)\sim L_{p}(f, \chi)L_{p}(f_{\eps}, \chi)$.
 
 By the factorisation the orders of vanishing at the central point of the factors of $L_{p}(f_{E},\nu^{s}\circ \frakN)$ will be one (say for  $L_{p}(f)$) and zero (say for  $L_{p}(f_{\eps})$). Then, by the first part of Theorem \ref{theoremC},\footnote{Which can be proved by using the $p$-adic Gross--Zagier formula attached to a field $E'$ satisfying the assumptions of \S\ref{8.1}.} the orders of vanishing of $L(f,s)$ and $L(f_{\eps},s)$ at $s=1$ will also be one and zero. Moreover  the Heegner point $z_{f,E'}$ attached to $f$ and  any   $E'$  also satisfying  $L(f_{\eps_{E'/F}},1)\neq 0$   is non-torsion, and in fact its trace $z_{f,F}={\rm Tr}_{E'/F}(z_{f, E'})$ is non-torsion 
and $z_{f, E'}$ is up to torsion a  multiple of $z_{f,F}$ in $J(X)(E')\otimes\baar{\Q}$. 
Therefore, by  the complex and $p$-adic Gross--Zagier formulas for a suitable $E'$ satisfying the assumptions of \S\ref{8.1} and $L(f_{\eps_{E'/F}},1)\neq 0$, we have
\begin{align*} L_{p,\nu}'(f,\one)=\prod_{\wp\vert p}\left(1-{1\over \alpha_{\wp}}\right)^{2}
{L'(f,1)\over \Omega_{f}^{+}  \langle z_{f,F}, z_{f,F}\rangle}  \langle z_{f,F}, z_{f,F}\rangle_{\nu}
\end{align*}
where $\langle \ , \  \rangle_{\nu}$ is the $p$-adic height pairing on $J(X)(F)$ attached to $\nu$, and  $\langle \ , \  \rangle$ is the N\'eron--Tate height (the ratio appearing above belongs to $M_{f}^{\times}$ by the Gross--Zagier formula).   This allows us to conclude
\begin{align*}
L_{p,\W}'(f_{E}, \one) &= {\Omega_{f}^{+}\Omega_{{f}_{\eps}}^{+}\over D_{E}^{-1/2}\Omega_{f}} 
{ L_{p,\nu}'(f, \one)L_{p}(f_{\eps}, \one)}\\
&=D_{E}^{1/2}\prod_{\wp\vert p}\left(1-{1\over \alpha_{\wp}}\right)^{2}\left(1-{\eps(\wp)\over \alpha_{\wp}}\right)^{2}
{L'(f,1)L(f_{\eps}, 1)\over \Omega_{f} \langle z_{f,F}, z_{f,F}\rangle}  \langle z_{f,F}, z_{f,F}\rangle_{\nu}\\
&=D_{F}^{-2}\prod_{\wp\vert p}\left(1-{1\over \alpha_{\wp}}\right)^{2}\left(1-{\eps(\wp)\over \alpha_{\wp}}\right)^{2} {\langle z_{f,E}, z_{f,E}\rangle \over \langle z_{f,F}, z_{f,F}\rangle} \langle z_{f,F}, z_{f,F}\rangle_{\nu}\\
&=D_{F}^{-2}\prod_{\wp\vert p}\left(1-{1\over \alpha_{\wp}}\right)^{2}\left(1-{\eps(\wp)\over \alpha_{\wp}}\right)^{2} \langle z_{f,E}, z_{f,E}\rangle_{\W}.
\end{align*}
\begin{rema} It is natural to conjecture that when $L_{p,\W}'(f_{E}, \one)=0$ we should have $\langle z_{f}, z_{f}\rangle_{\W}=0$. However in this case the above argument fails because, without knowledge of the nontriviality of the $p$-adic height pairing, the vanishing of $L_{p}(f_{E},\W^{s})$ to order $\geq 2$ does not imply a similar high-order vanishing for $L(f_{E},s)$.
\end{rema}

\section{Periods and the Birch and Swinnerton-Dyer conjecture}\label{sec:periods}  
As seen in the Introduction, the application of our result to the Birch and Swinnerton-Dyer  formula rests on a conjectural relation among the periods of $f$ and the associated abelian variety $A$. Here we would like to briefly elaborate on this conjecture and its arithmetic consequences. (This section contains no new results or conjectures and is a very brief survey of work of Shimura and Yoshida \cite{yoshida}.) We retain the notation of the Introduction, and set $M=M_{f}$ and  $\dim\,A = [M:\Q]=d$.
\subsection{Real periods}\label{sec:realperiods}
The  conjecture on periods stated in the Introduction   can be refined to a conjecture on rationality rather than algebraicity.  First we need to  define  the automorphic periods $\Omega_{f^{\sigma}}^{+}$, for $\sigma\in \Hom(M,\C)$; they are naturally defined as elements of $\C^{\times}/M^{\times}$ (see \cite{ragh} for a modern exposition): one can choose them ``covariantly''  in the sense of \cite{yoshida} in order to have $\prod_{\sigma}\Omega_{f^{\sigma}}^{+}$ defined up to $\Q^{\times}$, or define directly the product as follows. Let $\mathcal{H}_{N}=Z(\A)\bks \GL_{2}(\A)/K_{0}(N)K_{\infty}$ be the open Hilbert modular variety   of level $N$. Then the perfect pairing of $\Q$-vector spaces
\begin{align}\label{periodpairing}
H_{g}(\mathcal{H}_{N},\Q)^{+}\times S_{2}(K_{0}(N), \Q) \to \C
\end{align}
(where ``$+$'' denotes the intersection of the  $+1$-eigenspaces for  the complex conjugations)
decomposes under the diagonal action of $\mathbf{T}_{N}$ into $\Q$-rational blocks {parametrised} by the Galois-conjugacy classes of eigenforms. Then 
$$\prod_{\sigma}\Omega_{f^{\sigma}}^{+}\in \C^{\times}/\Q^{\times}$$
is $(2\pi i)^{dg}$ times  the discriminant of the pairing on the rational block corresponding to $\{f^{\sigma}\}_{\sigma}$.  (The individual $\Omega_{f^{\sigma}}^{+}\in \C^{\times}/M^{\times}$ are defined as the discriminants of \eqref{periodpairing} on $\baar{\Q}$-rational $\mathbf{T}_{N}$-eigenblocks. One can similarly define periods $\Omega_{f^{\sigma}}^{-}$ by paring  with $H_{g}(\mathcal{H}_{N}, \Q)^{-}$, the $-1$-eigenspace for the complex conjugations.)
\begin{conj}\label{perQ} We have 
$$\Omega_{A}\sim \prod_{\sigma}\Omega_{f^{\sigma}}^{+}$$
in $\C^{\times}/\Q^{\times}$. 
\end{conj}
The conjecture is originally due to Shimura (see \cite{shimuraconj}, especially \S 11) and was refined by Yoshida \cite{yoshida}. When $A$ has complex multiplication, it has been proved by Blasius \cite{blasius}.
It is also known when $F=\Q$; before discussing that, let us translate it into a  language closer to  conjectures of Shimura.

For each $\tau\colon F\to \R$, let $f_{B(\tau)}$ be the Jacquet-Langlands transfer of $f$ to a rational\footnote{For consistency with the case in which $B(\tau)=\GL_{2}(\Q)$ and ``rational'' means ``rational $q$-expansion coefficients'', here $f_{B(\tau)}$ is considered $F$-rational for  the  structure $H^{0}(X/ F, \Omega_{X/ F})\otimes (2\pi i)^{-1} \Q\subset H^{0}(X/ F, \Omega_{X/ F})\otimes_{F, \tau}\C$ (where $X$ is the Shimura curve defined in the Introduction).} form on the quaternion algebra $B(\tau)/F$ defined in the Introduction (recall that $B(\tau)$ is ramified at all infinite places except $\tau$), and let $X$ be our Shimura curve. Then $A$ is (up to isogeny)  a quotient  $\phi$ of $J(X)$, and for each embedding $\tau$ we can write
$$\phi^{*}\omega_{A}=c_{\tau}\bigwedge_{\sigma} 2\pi i  f_{B(\tau)}^{\sigma}(z)\, dz$$
as forms in $H^{0}(J(X)(\C_{\tau}), \Omega^{d})$,  for some $c_{\tau}\in F^{\times}$ (since both are generators of a rank one $F$-vector space); here $z$ denotes the coordinate on the upper half-plane uniformising $X$.
Then we have 
$$\int_{A(\R_{\tau})}|\omega_{A}|_{\tau}\sim \prod_{\sigma} \Omega_{f^{\sigma}_{B(\tau)}}^{+} \textrm{  in }\C^{\times}/F^{\times},$$
where $ \Omega_{f^{\sigma}_{B(\tau)}}^{+}$ is $2\pi i$ times the discriminant of the ${f^{\sigma}_{B(\tau)}}$-part of the analogue of the pairing \eqref{periodpairing} on $X(\C_{\tau})$.
When choices are made covariantly in $\tau$, we then get $\Omega_{A}\sim  \prod_{\sigma,\tau} \Omega_{f^{\sigma}_{B(\tau)}}^{+} $ in $\C^{\times}/\Q^{\times}$.

Our conjecture, decomposed into its $\sigma$-constituents, can then be rewritten  as 
\begin{align}\label{eqper}
\Omega_{f}^{+}\sim \prod_{\tau}\Omega_{f_{B(\tau)}}^{+} \textrm{ in }\C^{\times}/(MF)^{\times}.
\end{align}
In this form, this is a stronger  version of Shimura's conjecture \cite{P-inv} on the factorisation of periods of Hilbert modular forms up to algebraic factors in terms of $P$-invariants. The reader is referred to \cite{yoshida} for a discussion of this point.

Notice that \eqref{eqper}  is nontrivial even when $F=\Q$: it asserts that the periods of the transfers of $f$ to any indefinite quaternion algebra have the same transcendental (or irrational) parts. However, in this case the conjecture is known by the work of  Shimura \cite{shimura31} (for the  algebraicity) and Prasanna \cite{prasanna} (for the  rationality).  

For general $F$, Shimura's conjecture on $P$-invariants is largely  proved by Yoshida \cite{yoshida2} under an  assumption of non-vanishing of  certain $L$-values.

\begin{rema} It is clear that our conjecture implies  that the Birch and Swinnerton-Dyer conjectural formula is true up to a nonzero rational factor when $A$ has analytic $M$-rank zero. By the complex (respectively, the  $p$-adic) Gross--Zagier formula, the conjecture for $f$ also implies the complex (respectively, the  $p$-adic) Birch and Swinnerton-Dyer formulas up to a rational factor when $A$ has ($p$-adic) analytic $M$-rank one.
\end{rema}

\subsection{Quadratic periods}
We can formulate a conjecture analogous to Conjecture \ref{perQ} for the periods of the base-changed abelian variety  $A_{E}=A\times_{\Spec F}\Spec E$.
\begin{conj}\label{quadper} We have
$$\Omega_{A_{E}}\sim \prod_{\sigma}\Omega_{f^{\sigma}}$$
in $\C^{\times}/\Q^{\times}$.
\end{conj}
Here  the period of $A_{E}$ is
$$\Omega_{A_{E}}=\prod_{\tau:E\to\C}\int_{A(\C_{\tau})} |\omega_{A_{E}}|_{\tau},$$
where for a differential form $\omega=h(z)dz_{1}\wedge\cdots\wedge dz_{k}$ we have $|\omega|_{\tau}=|h(z)|_{\tau}^{2}dz_{1}\wedge d\baar{z}_{1}\wedge\cdots\wedge dz_{k}\wedge d\baar{z}_{k}$.

As above, this conjecture can be ``decomposed'' into 
\begin{align}\label{eqperquad}
\Omega_{f}\sim \prod_{\tau}\Omega_{f_{B(\tau)}} \textrm{ in }\C^{\times}/(MF)^{\times}.
\end{align}
where $\Omega_{f_{B(\tau)}}$ is $\pi^{2}$ times the Pertersson inner product of  ${f_{B(\tau)}}$. This is essentially Shimura's conjecture on $Q$-invariants (see \cite{P-inv}). Up to algebraicity it has been proved by Harris \cite{harris} under a local condition   (a new proof of the same result should appear in forthcoming work of Ichino--Prasanna, yielding rationality and removing the local assumption).  As $\Omega_{f}=\Omega_{f}^{+}\Omega_{f}^{-}$,\footnote{See e.g. \cite[Theorem 4.3 (II)]{shimura}, where the assumption on the weight can now be removed thanks to the work of Rohrlich.} the factorisation \eqref{eqperquad}  is implied by \eqref{eqper} and its analogue for $\Omega_{f}^{-}$; thus Harris's result  can be seen as evidence for the conjecture on real periods. 

We take the opportunity to record an immediate  consequence of the conjecture on quadratic periods and the Gross--Zagier formulas. 

\begin{theo} If $A_{E}$ has complex (respectively, $p$-adic) analytic $M$-rank $\leq 1$, then the complex (respectively, the $p$-adic) Birch and Swinnerton-Dyer formula for $A_{E}$ is true up to a nonzero  algebraic factor. 
\end{theo}

\end{part}

\backmatter
\addtocontents{toc}{\medskip}

\begin{bibdiv}
\begin{biblist}

\bib{AG}{article}{
   author={Andreatta, F.},
   author={Goren, E. Z.},
   title={Hilbert modular forms: mod $p$ and $p$-adic aspects},
   journal={Mem. Amer. Math. Soc.},
   volume={173},
   date={2005},
   number={819},
   pages={vi+100},
   issn={0065-9266},
   review={\MR{2110225 (2006f:11049)}},
   doi={10.1090/memo/0819},
}

\bib{bertrand}{article}{
   author={Bertrand, Daniel},
   title={Propri\'et\'es arithm\'etiques de fonctions th\^eta \`a plusieurs
   variables},
   language={French},
   conference={
      title={Number theory, Noordwijkerhout 1983},
      address={Noordwijkerhout},
      date={1983},
   },
   book={
      series={Lecture Notes in Math.},
      volume={1068},
      publisher={Springer},
      place={Berlin},
   },
   date={1984},
   pages={17--22},
   review={\MR{756080}},
   doi={10.1007/BFb0099438},
}

\bib{besserCG}{article}{
   author={Besser, Amnon},
   title={The $p$-adic height pairings of Coleman-Gross and of Nekov\'a\v r},
   conference={
      title={Number theory},
   },
   book={
      series={CRM Proc. Lecture Notes},
      volume={36},
      publisher={Amer. Math. Soc.},
      place={Providence, RI},
   },
   date={2004},
   pages={13--25},
   review={\MR{2076563 (2005f:11130)}},
}

\bib{besser}{article}{
   author={Besser, Amnon},
   title={$p$-adic Arakelov theory},
   journal={J. Number Theory},
   volume={111},
   date={2005},
   number={2},
   pages={318--371},
   issn={0022-314X},
   review={\MR{2130113 (2006j:14029)}},
   doi={10.1016/j.jnt.2004.11.010},
}

\bib{bfh}{article}{
   author={Bump, Daniel},
   author={Friedberg, Solomon},
   author={Hoffstein, Jeffrey},
   title={Nonvanishing theorems for $L$-functions of modular forms and their
   derivatives},
   journal={Invent. Math.},
   volume={102},
   date={1990},
   number={3},
   pages={543--618},
   issn={0020-9910},
   review={\MR{1074487 (92a:11058)}},
   doi={10.1007/BF01233440},
}

\bib{blasius}{article}{
   author={Blasius, Don},
   title={On the critical values of Hecke $L$-series},
   journal={Ann. of Math. (2)},
   volume={124},
   date={1986},
   number={1},
   pages={23--63},
   issn={0003-486X},
   review={\MR{847951 (88i:11035)}},
   doi={10.2307/1971386},
}

\bib{bump}{book}{
   author={Bump, Daniel},
   title={Automorphic forms and representations},
   series={Cambridge Studies in Advanced Mathematics},
   volume={55},
   publisher={Cambridge University Press},
   place={Cambridge},
   date={1997},
   pages={xiv+574},
   isbn={0-521-55098-X},
   review={\MR{1431508 (97k:11080)}},
   doi={10.1017/CBO9780511609572},
}

\bib{bh}{book}{
   author={Bushnell, Colin J.},
   author={Henniart, Guy},
   title={The local Langlands conjecture for $\rm GL(2)$},
   series={Grundlehren der Mathematischen Wissenschaften [Fundamental
   Principles of Mathematical Sciences]},
   volume={335},
   publisher={Springer-Verlag},
   place={Berlin},
   date={2006},
   pages={xii+347},
   isbn={978-3-540-31486-8},
   isbn={3-540-31486-5},
   review={\MR{2234120 (2007m:22013)}},
   doi={10.1007/3-540-31511-X},
}

\bib{carayol}{article}{
   author={Carayol, Henri},
   title={Sur la mauvaise r\'eduction des courbes de Shimura},
   language={French},
   journal={Compositio Math.},
   volume={59},
   date={1986},
   number={2},
   pages={151--230},
   issn={0010-437X},
   review={\MR{860139 (88a:11058)}},
}
\bib{casselman}{article}{
   author={Casselman, William},
   title={On some results of Atkin and Lehner},
   journal={Math. Ann.},
   volume={201},
   date={1973},
   pages={301--314},
   issn={0025-5831},
   review={\MR{0337789 (49 \#2558)}},
}

\bib{CG}{article}{
   author={Coleman, Robert F.},
   author={Gross, Benedict H.},
   title={$p$-adic heights on curves},
   conference={
      title={Algebraic number theory},
   },
   book={
      series={Adv. Stud. Pure Math.},
      volume={17},
      publisher={Academic Press},
      place={Boston, MA},
   },
   date={1989},
   pages={73--81},
   review={\MR{1097610 (92d:11057)}},
}
			
%

\bib{dimitrov}{article}{
   author={Dimitrov, Mladen},
   title={Automorphic symbols, $p$-adic $L$-functions and ordinary
   cohomology of Hilbert modular varieties},
   journal={Amer. J. Math.},
   volume={135},
   date={2013},
   number={4},
   pages={1117--1155},
   issn={0002-9327},
   review={\MR{3086071}},
   doi={10.1353/ajm.2013.0035},
}

\bib{DR}{article}{
   author={Deligne, Pierre},
   author={Ribet, Kenneth A.},
   title={Values of abelian $L$-functions at negative integers over totally
   real fields},
   journal={Invent. Math.},
   volume={59},
   date={1980},
   number={3},
   pages={227--286},
   issn={0020-9910},
   review={\MR{579702 (81m:12019)}},
   doi={10.1007/BF01453237},
}

\bib{pyzz}{article}{author={Disegni, Daniel}, title={The $p$-adic Gross--Zagier formula on Shimura curves}, status={preprint}}

\bib{faltings}{article}{
   author={Faltings, Gerd},
   title={Endlichkeitss\"atze f\"ur abelsche Variet\"aten \"uber
   Zahlk\"orpern},
   language={German},
   journal={Invent. Math.},
   volume={73},
   date={1983},
   number={3},
   pages={349--366},
   issn={0020-9910},
   review={\MR{718935 (85g:11026a)}},
   doi={10.1007/BF01388432},
}

\bib{fontaine}{article}{
   author={Fontaine, Jean-Marc},
   title={Sur certains types de repr\'esentations $p$-adiques du groupe de
   Galois d'un corps local;\ construction d'un anneau de Barsotti-Tate},
   language={French},
   journal={Ann. of Math. (2)},
   volume={115},
   date={1982},
   number={3},
   pages={529--577},
   issn={0003-486X},
   review={\MR{657238 (84d:14010)}},
   doi={10.2307/2007012},
}

\bib{quasi}{article}{
   author={Gross, Benedict H.},
   title={On canonical and quasicanonical liftings},
   journal={Invent. Math.},
   volume={84},
   date={1986},
   number={2},
   pages={321--326},
   issn={0020-9910},
   review={\MR{833193 (87g:14051)}},
   doi={10.1007/BF01388810},
}

\bib{GZ}{article}{
   author={Gross, Benedict H.},
   author={Zagier, Don B.},
   title={Heegner points and derivatives of $L$-series},
   journal={Invent. Math.},
   volume={84},
   date={1986},
   number={2},
   pages={225--320},
   issn={0020-9910},
   review={\MR{833192 (87j:11057)}},
   doi={10.1007/BF01388809},
}

\bib{harris}{article}{
   author={Harris, Michael},
   title={$L$-functions of $2\times 2$ unitary groups and factorization of
   periods of Hilbert modular forms},
   journal={J. Amer. Math. Soc.},
   volume={6},
   date={1993},
   number={3},
   pages={637--719},
   issn={0894-0347},
   review={\MR{1186960 (93m:11043)}},
   doi={10.2307/2152780},
}

\bib{hidaI}{article}{
   author={Hida, Haruzo},
   title={A $p$-adic measure attached to the zeta functions associated with
   two elliptic modular forms. I},
   journal={Invent. Math.},
   volume={79},
   date={1985},
   number={1},
   pages={159--195},
   issn={0020-9910},
   review={\MR{774534 (86m:11097)}},
   doi={10.1007/BF01388661},
}

\bib{Hi}{article}{
   author={Hida, Haruzo},
   title={On $p$-adic $L$-functions of ${\rm GL}(2)\times {\rm GL}(2)$ over
   totally real fields},
   language={English, with French summary},
   journal={Ann. Inst. Fourier (Grenoble)},
   volume={41},
   date={1991},
   number={2},
   pages={311--391},
   issn={0373-0956},
   review={\MR{1137290 (93b:11052)}},
}
\bib{HT}{article}{
   author={Hida, H.},
   author={Tilouine, J.},
   title={Anti-cyclotomic Katz $p$-adic $L$-functions and congruence
   modules},
   journal={Ann. Sci. \'Ecole Norm. Sup. (4)},
   volume={26},
   date={1993},
   number={2},
   pages={189--259},
   issn={0012-9593},
   review={\MR{1209708 (93m:11044)}},
}

\bib{howgl2}{article}{
   author={Howard, Benjamin},
   title={Iwasawa theory of Heegner points on abelian varieties of $\rm
   GL_2$ type},
   journal={Duke Math. J.},
   volume={124},
   date={2004},
   number={1},
   pages={1--45},
   issn={0012-7094},
   review={\MR{2072210 (2005f:11117)}},
   doi={10.1215/S0012-7094-04-12411-X},
}

\bib{hsieh}{article}{
   author={Hsieh, Ming-Lun},
   title={Eisenstein congruence on unitary groups and Iwasawa main
   conjectures for CM fields},
   journal={J. Amer. Math. Soc.},
   volume={27},
   date={2014},
   number={3},
   pages={753--862},
   issn={0894-0347},
   review={\MR{3194494}},
   doi={10.1090/S0894-0347-2014-00786-4},
}

\bib{iovita}{article}{
   author={Iovita, Adrian},
   title={Formal sections and de Rham cohomology of semistable abelian
   varieties},
   booktitle={Proceedings of the Conference on $p$-adic Aspects of the
   Theory of Automorphic Representations (Jerusalem, 1998)},
   journal={Israel J. Math.},
   volume={120},
   date={2000},
   number={part B},
   part={part B},
   pages={429--447},
   issn={0021-2172},
   review={\MR{1809629 (2002g:14026)}},
   doi={10.1007/BF02834846},
}

\bib{jacquet}{book}{
   author={Jacquet, Herv{\'e}},
   title={Automorphic forms on ${\rm GL}(2)$. Part II},
   series={Lecture Notes in Mathematics, Vol. 278},
   publisher={Springer-Verlag},
   place={Berlin},
   date={1972},
   pages={xiii+142},
   review={\MR{0562503 (58 \#27778)}},
}

%

\bib{kobayashi}{article}{
   author={Kobayashi, Shinichi},
   title={The $p$-adic Gross-Zagier formula for elliptic curves at
   supersingular primes},
   journal={Invent. Math.},
   volume={191},
   date={2013},
   number={3},
   pages={527--629},
   issn={0020-9910},
   review={\MR{3020170}},
   doi={10.1007/s00222-012-0400-9},
}

\bib{kobayashi2}{article}{
   author={Kobayashi, Shinichi},
   title={The $p$-adic height pairing on abelian varieties at
   non-ordinary primes},  conference={title={Iwasawa theory 2012: state of the art and recent advances (Heidelberg, 2012), Springer (2014)}}}

\bib{koly}{article}{
   author={Kolyvagin, V. A.},
   title={Finiteness of $E({\bf Q})$ and SH$(E,{\bf Q})$ for a subclass of
   Weil curves},
   language={Russian},
   journal={Izv. Akad. Nauk SSSR Ser. Mat.},
   volume={52},
   date={1988},
   number={3},
   pages={522--540, 670--671},
   issn={0373-2436},
   translation={
      journal={Math. USSR-Izv.},
      volume={32},
      date={1989},
      number={3},
      pages={523--541},
      issn={0025-5726},
   },
   review={\MR{954295 (89m:11056)}},
}
	
\bib{kolystruct}{article}{
   author={Kolyvagin, V. A.},
   title={On the structure of Shafarevich-Tate groups},
   conference={
      title={Algebraic geometry},
      address={Chicago, IL},
      date={1989},
   },
   book={
      series={Lecture Notes in Math.},
      volume={1479},
      publisher={Springer},
      place={Berlin},
   },
   date={1991},
   pages={94--121},
   review={\MR{1181210 (94b:11055)}},
   doi={10.1007/BFb0086267},
}

\bib{koly-log}{article}{
   author={Kolyvagin, V. A.},
   author={Logach{\"e}v, D. Yu.},
   title={Finiteness of SH over totally real fields},
   language={Russian},
   journal={Izv. Akad. Nauk SSSR Ser. Mat.},
   volume={55},
   date={1991},
   number={4},
   pages={851--876},
   issn={0373-2436},
   translation={
      journal={Math. USSR-Izv.},
      volume={39},
      date={1992},
      number={1},
      pages={829--853},
      issn={0025-5726},
   },
   review={\MR{1137589 (93d:11063)}},
}

\bib{bao}{article}{
author={Le Hung, Bao}, title={Modularity of some elliptic curves over totally real fields}, status={preprint}}

\bib{six}{article}{
author={Freitas, Nuno},
author={Le Hung, Bao},author={Siksek, Samir}, title={Elliptic curves over real quadratic fields are modular
}, status={preprint}}

\bib{maninjl}{article}{
   author={Manin, Yu. I.},
   title={Non-Archimedean integration and $p$-adic Jacquet-Langlands
   $L$-functions},
   language={Russian},
   journal={Uspehi Mat. Nauk},
   volume={31},
   date={1976},
   number={1(187)},
   pages={5--54},
   issn={0042-1316},
   review={\MR{0417134 (54 \#5194)}},
}

\bib{argos}{article}{
   author={Meusers, Volker},
   title={Canonical and quasi-canonical liftings in the split case},
   language={English, with English and French summaries},
   journal={Ast\'erisque},
   number={312},
   date={2007},
   pages={87--98},
   issn={0303-1179},
   isbn={978-2-85629-231-0},
   review={\MR{2340373 (2008g:11101)}},
}

\bib{nekheights}{article}{
   author={Nekov{\'a}{\v{r}}, Jan},
   title={On $p$-adic height pairings},
   conference={
      title={S\'eminaire de Th\'eorie des Nombres, Paris, 1990--91},
   },
   book={
      series={Progr. Math.},
      volume={108},
      publisher={Birkh\"auser Boston},
      place={Boston, MA},
   },
   date={1993},
   pages={127--202},
   review={\MR{1263527 (95j:11050)}},
}

\bib{nekovar}{article}{
   author={Nekov{\'a}{\v{r}}, Jan},
   title={On the $p$-adic height of Heegner cycles},
   journal={Math. Ann.},
   volume={302},
   date={1995},
   number={4},
   pages={609--686},
   issn={0025-5831},
   review={\MR{1343644 (96f:11073)}},
   doi={10.1007/BF01444511},
}

\bib{Pa}{article}{
   author={Panchishkin, A. A.},
   title={Convolutions of Hilbert modular forms and their non-Archimedean
   analogues},
   language={Russian},
   journal={Mat. Sb. (N.S.)},
   volume={136(178)},
   date={1988},
   number={4},
   pages={574--587, 592},
   issn={0368-8666},
   translation={
      journal={Math. USSR-Sb.},
      volume={64},
      date={1989},
      number={2},
      pages={571--584},
      issn={0025-5734},
   },
   review={\MR{965894 (89k:11033)}},
}

\bib{PRlondon}{article}{
   author={Perrin-Riou, Bernadette},
   title={Fonctions $L$ $p$-adiques associ\'ees \`a une forme modulaire et
   \`a un corps quadratique imaginaire},
   language={French},
   journal={J. London Math. Soc. (2)},
   volume={38},
   date={1988},
   number={1},
   pages={1--32},
   issn={0024-6107},
   review={\MR{949078 (89m:11043)}},
   doi={10.1112/jlms/s2-38.1.1},
}

\bib{PR}{article}{
   author={Perrin-Riou, Bernadette},
   title={Points de Heegner et d\'eriv\'ees de fonctions $L$ $p$-adiques},
   language={French},
   journal={Invent. Math.},
   volume={89},
   date={1987},
   number={3},
   pages={455--510},
   issn={0020-9910},
   review={\MR{903381 (89d:11034)}},
   doi={10.1007/BF01388982},
}

\bib{prasanna}{article}{
   author={Prasanna, Kartik},
   title={Arithmetic properties of the Shimura-Shintani-Waldspurger
   correspondence},
   note={With an appendix by Brian Conrad},
   journal={Invent. Math.},
   volume={176},
   date={2009},
   number={3},
   pages={521--600},
   issn={0020-9910},
   review={\MR{2501296 (2011d:11102)}},
   doi={10.1007/s00222-008-0169-z},
}

\bib{ragh}{article}{
   author={Raghuram, A.},
   author={Tanabe, Naomi},
   title={Notes on the arithmetic of Hilbert modular forms},
   journal={J. Ramanujan Math. Soc.},
   volume={26},
   date={2011},
   number={3},
   pages={261--319},
   issn={0970-1249},
   review={\MR{2865819 (2012m:11060)}},
}

\bib{schneider}{article}{
   author={Schneider, Peter},
   title={$p$-adic height pairings. II},
   journal={Invent. Math.},
   volume={79},
   date={1985},
   number={2},
   pages={329--374},
   issn={0020-9910},
   review={\MR{778132 (86j:11063)}},
   doi={10.1007/BF01388978},
}

\bib{serre}{article}{
   author={Serre, Jean-Pierre},
   title={Sur le r\'esidu de la fonction z\^eta $p$-adique d'un corps de
   nombres},
   language={French, with English summary},
   journal={C. R. Acad. Sci. Paris S\'er. A-B},
   volume={287},
   date={1978},
   number={4},
   pages={A183--A188},
   review={\MR{0506177 (58 \#22024)}},
}

\bib{shimura}{article}{
   author={Shimura, Goro},
   title={The special values of the zeta functions associated with Hilbert
   modular forms},
   journal={Duke Math. J.},
   volume={45},
   date={1978},
   number={3},
   pages={637--679},
   issn={0012-7094},
   review={\MR{507462 (80a:10043)}},
}

\bib{shimura31}{article}{
   author={Shimura, Goro},
   title={The periods of certain automorphic forms of arithmetic type},
   journal={J. Fac. Sci. Univ. Tokyo Sect. IA Math.},
   volume={28},
   date={1981},
   number={3},
   pages={605--632 (1982)},
   issn={0040-8980},
   review={\MR{656039 (84f:10040)}},
}

\bib{P-inv}{article}{
   author={Shimura, Goro},
   title={Algebraic relations between critical values of zeta functions and
   inner products},
   journal={Amer. J. Math.},
   volume={105},
   date={1983},
   number={1},
   pages={253--285}, 
   issn={0002-9327},
   review={\MR{692113 (84j:10038)}},
   doi={10.2307/2374388},
}

\bib{shimuraconj}{article}{
   author={Shimura, Goro},
   title={On the critical values of certain Dirichlet series and the periods
   of automorphic forms},
   journal={Invent. Math.},
   volume={94},
   date={1988},
   number={2},
   pages={245--305},
   issn={0020-9910},
   review={\MR{958833 (90e:11069)}},
   doi={10.1007/BF01394326},
}

\bib{ari}{article}{
	author={Shnidman, Ariel},
	title= {$p$-adic heights of generalized Heegner cycles},
	status={preprint}
}


\bib{SU}{article}{
   author={Skinner, Christopher},
   author={Urban, Eric},
   title={The Iwasawa Main Conjectures for $GL_2$},
   journal={Invent. Math.},
   volume={195},
   date={2014},
   number={1},
   pages={1--277},
   issn={0020-9910},
   review={\MR{3148103}},
   doi={10.1007/s00222-013-0448-1},
}

\bib{nearly}{article}{author={Urban, Eric}, title={Nearly overconvergent modular forms},  conference={title={Iwasawa theory 2012: state of the art and recent advances (Heidelberg, 2012), Springer (2014)}}}

\bib{wald}{article}{
   author={Waldspurger, J.-L.},
   title={Sur les valeurs de certaines fonctions $L$ automorphes en leur
   centre de sym\'etrie},
   language={French},
   journal={Compositio Math.},
   volume={54},
   date={1985},
   number={2},
   pages={173--242},
   issn={0010-437X},
   review={\MR{783511 (87g:11061b)}},
}

\bib{xin wan}{article}{author={Wan, Xin}, title={
Iwasawa Main Conjecture for Hilbert modular forms}, status={preprint}}

\bib{yoshida}{article}{
   author={Yoshida, Hiroyuki},
   title={On the zeta functions of Shimura varieties and periods of Hilbert
   modular forms},
   journal={Duke Math. J.},
   volume={75},
   date={1994},
   number={1},
   pages={121--191},
   issn={0012-7094},
   review={\MR{1284818 (95d:11059)}},
   doi={10.1215/S0012-7094-94-07505-4},
}

\bib{yoshida2}{article}{
   author={Yoshida, Hiroyuki},
   title={On a conjecture of Shimura concerning periods of Hilbert modular
   forms},
   journal={Amer. J. Math.},
   volume={117},
   date={1995},
   number={4},
   pages={1019--1038},
   issn={0002-9327},
   review={\MR{1342839 (96d:11056)}},
   doi={10.2307/2374957},
}

\bib{yzz}{book}{
     title = {The Gross-Zagier Formula on Shimura Curves},  
     subtitle = {},     
     edition = {},       
     author = {Xinyi Yuan},author = { Shou-Wu Zhang},author = { Wei Zhang},
     editor = {},     
     volume = {184},     
     series = {Annals of Mathematics Studies},  
     pages = {272},         
     place={Princeton, NJ},
     date = {2012},      
     publisher = {Princeton University Press},         
     }

\bib{zarhin}{article}{
   author={Zarhin, Yuri G.},
   title={$p$-adic heights on abelian varieties},
   conference={
      title={S\'eminaire de Th\'eorie des Nombres, Paris 1987--88},
   },
   book={
      series={Progr. Math.},
      volume={81},
      publisher={Birkh\"auser Boston},
      place={Boston, MA},
   },
   date={1990},
   pages={317--341},
   review={\MR{1042777 (91f:11043)}},
}

\bib{shouwu}{article}{
   author={Zhang, Shou-Wu},
   title={Heights of Heegner points on Shimura curves},
   journal={Ann. of Math. (2)},
   volume={153},
   date={2001},
   number={1},
   pages={27--147},
   issn={0003-486X},
   review={\MR{1826411 (2002g:11081)}},
   doi={10.2307/2661372},
}

\bib{asian}{article}{
   author={Zhang, Shou-Wu},
   title={Gross-Zagier formula for ${\rm GL}_2$},
   journal={Asian J. Math.},
   volume={5},
   date={2001},
   number={2},
   pages={183--290},
   issn={1093-6106},
   review={\MR{1868935 (2003k:11101)}},
}

\bib{II}{article}{
   author={Zhang, Shou-Wu},
   title={Gross-Zagier formula for $\rm GL(2)$. II},
   conference={
      title={Heegner points and Rankin $L$-series},
   },
   book={
      series={Math. Sci. Res. Inst. Publ.},
      volume={49},
      publisher={Cambridge Univ. Press},
      place={Cambridge},
   },
   date={2004},
   pages={191--214},
   review={\MR{2083213 (2005k:11121)}},
   doi={10.1017/CBO9780511756375.008},
}

\bib{weikol}{article}{
   author={Zhang, Wei},
   title={Selmer groups and the indivisibility of Heegner points},
   journal={Camb. J. Math.},
   volume={2},
   date={2014},
   number={2},
   pages={191--253},
   issn={2168-0930},
   review={\MR{3295917}},
}

\end{biblist}
\end{bibdiv}

\end{document}